\documentclass[12pt]{amsart}


\usepackage{hyperref}
\usepackage{fullpage}
\usepackage[mathscr]{euscript}
\usepackage{amsmath}
\usepackage{amssymb}
\usepackage{pstricks}
\usepackage{enumerate}
\usepackage{graphicx}
\usepackage{verbatim}
\usepackage{tikz}
\usetikzlibrary{matrix,arrows}  	
\usepackage{slashed}
\usepackage{mathrsfs}



\theoremstyle{plain}
\newtheorem{theorem}{Theorem}[subsection]
\newtheorem{lemma}[theorem]{Lemma}
\newtheorem{proposition}[theorem]{Proposition}

\newtheorem{corollary}[theorem]{Corollary}

\theoremstyle{definition}

\newtheorem{definition-proposition}[theorem]{Definition/Proposition}

\theoremstyle{remark}
\newtheorem{remark}[theorem]{Remark}

\unitlength1cm
\numberwithin{equation}{subsection}
\numberwithin{theorem}{subsection}


\newcommand{\oD}{ \!{\buildrel \circ 
\over D}^{_{_{_{\mbox{{\small $_{q+1}$}}}}}}}

\newcommand{\p}{\partial}

\newcommand{\cA}{\mathcal{A}}

\newcommand{\cC}{\mathcal{C}}

\newcommand{\cR}{\mathcal{R}}

\newcommand{\cT}{\mathcal{T}}

\def\cyl{\mathrm{c}\mathrm{y}\mathrm{l}}

\def\bend{\mathrm{b}\mathrm{e}\mathrm{n}\mathrm{d}}
\def\Euc{\mathrm{E}\mathrm{u}\mathrm{c}}
\newcommand{\std}{\mathrm{std}}
\newcommand{\Estd}{\mathrm{Estd}}

\newcommand{\Astd}{\mathrm{Astd}}

\newcommand{\boot}{\mathrm{boot}}

\newcommand{\con}{\mathrm{con}}

\newcommand{\Diff}{\mathrm{Diff}}

\newcommand{\GL}{\mathrm{GL}}

\newcommand{\Id}{\mathrm{id}}


\newcommand{\Mod}{\mathcal{M}}

\newcommand{\pre}{\mathrm{pre}}

\newcommand{\Riem}{\cR}
\newcommand{\res}{\mathrm{res}}

\newcommand{\step}{\mathrm{step}}

\newcommand{\stret}{\mathrm{stretch}}

\newcommand{\tor}{\mathrm{torp}}
\newcommand{\toe}{\mathrm{toe}}



\newcommand\lra{\longrightarrow}

\newcommand{\image}{\mathrm{image}}

\newcommand{\CircNum}[1]{\ooalign{\hfil\raise .00ex\hbox{\scriptsize #1}\hfil\crcr\mathhexbox20D}}


\newcommand{\rh}{\hookrightarrow}

\usepackage[all]{xy}
\SelectTips{cm}{10}
\CompileMatrices

\title[The Space of Positive Scalar Curvature Metrics on a Manifold with Boundary]{The Space of Positive Scalar Curvature Metrics on a Manifold with Boundary}

\author{Mark Walsh}

\email{Mark.Walsh@mu.ie}
\address{
Mathematics and Statistics\\
Maynooth University\\
Maynooth, County Kildare\\
Ireland
}

\date{\today}

\keywords{space of Riemannian metrics of positive scalar curvature, manifold with boundary, surgery, bordism, spin, Gromov-Lawson construction, weak homotopy equivalence\\\\
\indent The author acknowledges support from Simons Foundation Collaboration Grant No. 280310.}

\begin{document}
\begin{abstract}
We study the space of Riemannian metrics with positive scalar curvature on a compact manifold with boundary. These metrics extend a fixed boundary metric and take a product structure on a collar neighbourhood of the boundary. We show that the weak homotopy type of this space is preserved by certain surgeries on the boundary in co-dimension at least three. Thus, there is a weak homotopy equivalence between the space of such metrics on a simply connected spin manifold $W$, of dimension $n\geq 6$ and with simply connected boundary, and the corresponding space of metrics of positive scalar curvature on the standard disk $D^{n}$. Indeed, for certain boundary metrics, this space is weakly homotopy equivalent to the space of all metrics of positive scalar curvature on the standard sphere $S^{n}$. Finally, we prove analogous results for the more general space where the boundary metric is left unfixed.
\end{abstract}
\maketitle
\newpage
\tableofcontents
\newpage

\section{Introduction}

Recently, much progress has occurred in understanding the topology of the space $\Riem^{+}(X)$, of Riemannian metrics of positive scalar curvature (psc-metrics) on a smooth manifold $X$; see for example \cite{BERW},  \cite{CS}, \cite{HSS} and \cite{Marques}. With some exceptions (for example \cite{BERW} and \cite{EF}), these results deal mostly with the case when $X$ is closed. 
 In this paper, we replace $X$ with a manifold $W$, whose boundary $\p W$ is non-empty and, after imposing certain boundary conditions, we study an analogous space of psc-metrics on $W$, $\Riem^{+}(W, \p W)$. The results of this paper aim to shed some light on the problem of understanding the topology of this space and some relevant subspaces, as with the  analogous case for closed manifolds.

More precisely, let $W$ be a smooth compact manifold with
$\dim W =n+1$, and boundary $\partial W = X$, a closed smooth manifold with $\dim X=n$. We specify a collar $c:X \times  [0,2) \rh W$ and 
  denote by $\Riem^{+}(W, X)$, the space of all psc-metrics on $W$
  which take a product structure on the
  {image} $c\left(X \times  [0,1]\right)$. Thus, $h\in \Riem^{+}(W, X)$ if $c^{*}h=g+dt^{2}$ restricted to
    $X\times [0,1]$ for some
      $g\in\Riem^{+}(X)$. A further boundary condition we impose is to fix a psc-metric $g\in\Riem^{+}(X)$. We then define the subspace $\Riem^{+}(W, X)_g\subset \Riem^{+}(W, X)$ of all psc-metrics $h\in\Riem^{+}(W, X)$ where $(c^{*}h)|_{X\times \{0\}}=g$. Note that we allow for the possibility that the space $\Riem^{+}(W, X)_{g}$, or $\Riem^{+}(W, X)$, may be empty. To formulate our main theorem, we consider another smooth compact $(n+1)$-dimensional manifold $Z$ whose boundary $\partial Z=X_0\sqcup X_1$, is a disjoint union of closed $n$-manifolds. Thus $Z$ is a cobordism of $X_0$ and $X_1$, sometimes denoted as the triple $(Z; X_0, X_1)$. Here we specify a pair of disjoint collars $c_0: X_0 \times [0,2) \rh Z$, $c_1: X_1\times [0,2) \rh Z$ around $X_0$ and $X_1$ respectively. We fix a pair of psc-metrics $g_0\in\Riem^{+}(X_0)$ and $g_1\in \Riem^{+}(X_1)$ and denote by $\Riem^{+}(Z, \p Z)_{g_0, g_1}$, the space of psc-metrics $\bar{g}$ on $Z$ so that $c_i^{*}\bar{g}=g_i+dt^{2}$ restricted on $X_i\times [0,1]$ for $i=0,1$. We assume for now that $\Riem^{+}(Z, \p Z)_{g_0, g_1}$ is non-empty, although this need not be the case. Returning to the manifold $W$, we further suppose that $\p W = X = X_0$, one of the boundary components of $Z$. Let $W\cup Z$ denote the manifold obtained by gluing $Z$ to $W$ along this boundary component. Denoting by $c$ the collar $c_1$, we consider the subspace $\Riem^{+}(W\cup Z, X_1)_{g_1}$ of $\Riem^{+}(W\cup Z, X_1)$, consisting of psc-metrics which restrict as ${g_1}+dt^{2}$ on $c(X_1\times [0,1])$. For any element $\bar{g}\in\Riem^{+}(Z, \p Z)_{g_0, g_1}$, there is a map:
      \begin{equation}\label{gluemap}
      \begin{split}
      \mu_{Z,\bar{g}}:\Riem^{+}(W, X_0)_{g_0}&\longrightarrow \Riem^{+}(W\cup Z, X_1)_{g_1}\\
      h&\longmapsto h\cup \bar{g},
      \end{split}
      \end{equation} 
      where $h\cup \bar{g}$ is the metric obtained on $W\cup Z$ by the obvious gluing depicted in Fig.\ref{bordism}.
       \begin{figure}[!htbp]
\vspace{0.0cm}
\hspace{1cm}
\begin{picture}(0,0)
\includegraphics{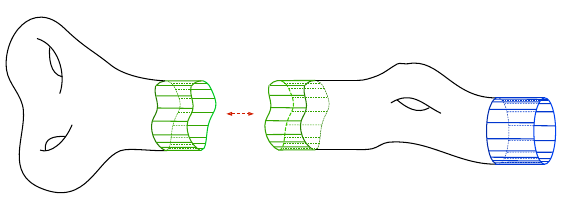}%
\end{picture}
\setlength{\unitlength}{3947sp}%
\begingroup\makeatletter\ifx\SetFigFont\undefined%
\gdef\SetFigFont#1#2#3#4#5{%
  \reset@font\fontsize{#1}{#2pt}%
  \fontfamily{#3}\fontseries{#4}\fontshape{#5}%
  \selectfont}%
\fi\endgroup%
\begin{picture}(5079,1559)(1902,-7227)
\put(1400,-6000){\makebox(0,0)[lb]{\smash{{\SetFigFont{10}{8}{\rmdefault}{\mddefault}{\updefault}{\color[rgb]{0,0,0}$(W, h)$}%
}}}}
\put(3050,-7000){\makebox(0,0)[lb]{\smash{{\SetFigFont{10}{8}{\rmdefault}{\mddefault}{\updefault}{\color[rgb]{0,0,0}$g_0+dt^{2}$}%
}}}}
\put(4000,-7000){\makebox(0,0)[lb]{\smash{{\SetFigFont{10}{8}{\rmdefault}{\mddefault}{\updefault}{\color[rgb]{0,0,0}$g_0+dt^{2}$}%
}}}}
\put(5700,-7130){\makebox(0,0)[lb]{\smash{{\SetFigFont{10}{8}{\rmdefault}{\mddefault}{\updefault}{\color[rgb]{0,0,0}$g_1+dt^{2}$}%
}}}}
\put(5000,-5900){\makebox(0,0)[lb]{\smash{{\SetFigFont{10}{8}{\rmdefault}{\mddefault}{\updefault}{\color[rgb]{0,0,0}$(Z, \bar{g})$}%
}}}}
\end{picture}%
\caption{Attaching $(W, h)$ to $(Z,\bar{g})$ along a common boundary}
\label{bordism}
\end{figure}   

\subsection{Results}

 Suppose $\phi: S^{p}\times D^{q+1}\rightarrow X$ is an embedding where $p+q+1=n$ and $q\geq 2$. Let $T_{\phi}$ be the trace of the surgery on $X$ with respect to $\phi$. Thus $\partial T_{\phi}=X\sqcup X'$ where $X'$ is the manifold obtained from $X$ by surgery. We will now let the cobordism $(T_{\phi}; X, X')$ play the role of $(Z; X_0, X_1)$ above. In the case when $q\geq 2$, the Surgery Theorem of Gromov and Lawson \cite{GL} describes a technique for constructing, from any psc-metric $g\in \Riem^{+}(X)$, a psc-metric $g'\in \Riem^{+}(X')$. A useful strengthening of this technique allows for the determination of a particular psc-metric  $\bar{g}\in\Riem^{+}(T_{\phi}, \p T_{\phi})_{g,g'}$, known as a {\em Gromov-Lawson trace} (or, more generally, a {\em Gromov-Lawson cobordism}); see \cite{Gajer}, \cite{Walsh1}. Metrics which are accessible from each other by a sequence of Gromov-Lawson surgeries are said to be {\em Gromov-Lawson cobordant.}  The map (\ref{gluemap}) now takes the form: 
$$\mu_{T_{\phi,\bar{g}}}:\Riem^{+}(W, X)_{g}\rightarrow\Riem^{+}(W', X')_{g'},$$ 
where $W'=W\cup T_{\phi}$. Our results are as follows.
\vspace{0.2cm}

\noindent{\bf Theorem A.} {\em Suppose $p,q\geq 2$. For any $g\in\Riem^{+}{(X)}$, there exist psc-metrics $g'\in\Riem^{+}(X')$ and $\bar{g}\in\Riem^{+}(T_{\phi})_{g, g'}$ so that the map $\mu_{T_{\phi},\bar{g}}$ is a weak homotopy equivalence:
\begin{equation*} 
\Riem^{+}(W, X)_{g}\simeq \Riem^{+}(W', X')_{g'}.
\end{equation*}}

\noindent{\bf Theorem B.}
{\em Suppose $W$ is a smooth compact spin manifold with closed boundary $X$. We further assume that $W$ and $X$ are both simply connected and that $\dim W=n+1\geq 6$. 
\begin{enumerate}
\item[(i.)] For any $g\in\Riem^{+}(X)$ where $\Riem^{+}(W, \p W)_{g}$ is non-empty, there is a psc-metric $g'\in\Riem^{+}(S^{n})$ and a weak homotopy equivalence:
\begin{equation*}
\Riem^{+}(W, X)_{g}\simeq \Riem^{+}(D^{n+1}, \p D^{n+1}=S^{n})_{g'}.
\end{equation*}
\item[(ii.)] Furthermore, if $g$ is Gromov-Lawson cobordant to the standard round metric $ds_{n}^{2}$, there is a weak homotopy equivalence:
\begin{equation*}
\Riem^{+}(W, X)_{g}\simeq \Riem^{+}(S^{n+1}).
\end{equation*}
\end{enumerate} }
 \vspace{0.2cm}

\noindent{\bf Theorem C.}
{\em When $p,q\geq 2$, the spaces $\Riem^{+}(W, X)$ and $\Riem^{+}(W', X')$ are weakly homotopy equivalent.}
\vspace{0.2cm}

\noindent{\bf Corollary D.} {\em When $W$ and $X$ satisfy the hypotheses of Theorem B, the spaces $\Riem^{+}(W, X)$ and $\Riem^{+}(D^{n+1}, S^{n})$ are weakly homotopy equivalent.}

\subsection{Background}\label{background}  
We begin with a brief discussion of the original problem for a closed $n$-dimensional manifold $X$. The space $\Riem^{+}(X)$ is an open subspace of the space of all Riemannian metrics on $X$, denoted $\Riem(X)$, under its usual $C^{\infty}$-topology. An old question in this subject is whether or not $X$ admits any psc-metrics, i.e.  whether or not $\Riem^{+}(X)$ is non-empty. Although work continues on this problem, in the case when $X$ is simply connected and $n\geq 5$, necessary and sufficient conditions are known: $\Riem^{+}(X)\neq \emptyset$ if and only if $X$ is either non-spin or $X$ is spin with Dirac index $\alpha(X)\in KO_{n}$ equal to zero. This result is due to Stolz \cite{S}, following important work by Gromov, Lawson \cite{GL} and others. For a survey of this problem, see \cite{RS}.

In the case when the space $\Riem^{+}(X)$ is non-empty, one may inquire about its topology. Up until recently, very little was known about this space beyond the level of path-connectivity. Hitchin showed for example, in \cite{Hit}, that if $X$ is spin, $\pi_{0}(\Riem^{+}(X))\neq 0$ when $n\equiv 0,1 (\mathrm{mod} 8)$ and that $\pi_{1}(\Riem^{+}(X))\neq 0$ when $n\equiv 0,-1 (\mathrm{mod} 8)$. It is worth noting that all of these non-trivial elements disappear once one descends to $\Mod^{+}(X):=\Riem^{+}(X)/\Diff(X)$, the moduli space of psc-metrics. Here, $\Diff(X)$ is the group of self-diffeomorphisms on $X$ and acts on $\Riem^{+}(X)$ by pulling back metrics. Later, Carr showed in \cite{Carr} that when $X$ is the sphere $S^{n}$, $\pi_{0}(\Riem^{+}(S^{4k-1}))$ is infinite for all $k\geq 2$ and all but finitely many of these non-trivial elements survive in the moduli space. Various generalisations of this result have been achieved. In particular, Botvinnik and Gilkey showed that $\pi_{0}(\Riem^{+}(X))\neq 0$ in the case when $X$ is spin and $\pi_1(X)$ is finite; see \cite{BG}. It is also worth mentioning the Kreck-Stolz s-invariant, defined in \cite{KreckStolz}, which
distinguishes path components of the space $\Mod^{+}(X)$ under certain circumstances. More recently, there have been a number of significant results which exhibit the non-triviality of higher homotopy groups of both $\Riem^{+}(X)$ and $\Mod^{+}(X)$ for a variety of manifolds $X$; see \cite{BHSW}, \cite{CS} and \cite{HSS}. Most of these results (\cite{Hit}, \cite{CS}, \cite{HSS}) involve showing that for certain closed spin manifolds $X$ and certain psc-metrics $g\in\Riem^{+}(X)$, a particular variation of the Dirac index, introduced by Hitchin in \cite{Hit}, often induces non-trivial homomorphisms:
\begin{equation*}
A_{k}(X,g): \pi_k (\Riem^+ (X), g) \lra KO_{k+n+1}.
\end{equation*}
Most recently of all, Botvinnik, Ebert and Randal-Williams in \cite{BERW}, show that this map is always non-trivial when the codomain is non-trivial. Their methods are new and make use of work done by Randal-Williams and Galatius on moduli spaces of manifolds; see \cite{GRW}. 

One result which the authors in \cite{BERW} make use of and which is of particular relevance here, is the following theorem of Chernysh which utilises a family version of the Gromov-Lawson construction. 
\begin{theorem}\label{Chernysh} {\bf (Chernysh \cite{Che})}
Let $X$ be a smooth compact manifold of dimension $n$. Suppose $X'$ is obtained from $X$ by surgery on a sphere $i:S^{p}\hookrightarrow {X}$ with $p+q+1=n$ and  $p,q\geq 2$. Then the spaces $\Riem^{+}(X)$ and $\Riem^{+}(X')$ are homotopy equivalent.
\end{theorem}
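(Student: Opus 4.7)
The plan is to realize the desired equivalence by introducing an intermediate subspace of \emph{standardized} psc-metrics which serves as a common retract of both $\Riem^{+}(X)$ and $\Riem^{+}(X')$. Fix a trivialization $\phi: S^{p}\times D^{q+1}\rh X$ of a tubular neighborhood of the embedded sphere $S^{p}$, and fix a rotationally symmetric \emph{torpedo} psc-metric $g_{\tor}^{q+1}$ on $D^{q+1}$ whose boundary collar is a round cylinder over $S^{q}$. Let $\Riem^{+}_{\std}(X)\subset \Riem^{+}(X)$ denote the subspace of psc-metrics $g$ such that $\phi^{*}g = ds_{p}^{2}+g_{\tor}^{q+1}$ on all of $S^{p}\times D^{q+1}$. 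By construction, any such metric may be converted to one in $\Riem^{+}_{\std}(X')$ by excising the standard piece $S^{p}\times D^{q+1}$ and gluing in the dual handle $D^{p+1}\times S^{q}$ carrying the dual torpedo $g_{\tor}^{p+1}+ds_{q}^{2}$; the two pieces agree on the common boundary $S^{p}\times S^{q}$, giving a tautological homeomorphism $\Riem^{+}_{\std}(X)\cong \Riem^{+}_{\std}(X')$.

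The heart of the proof is to show that the inclusion $\iota: \Riem^{+}_{\std}(X)\hookrightarrow \Riem^{+}(X)$ is a weak homotopy equivalence. To establish this, I would show that any continuous map $f:(D^{k},S^{k-1})\to (\Riem^{+}(X),\Riem^{+}_{\std}(X))$ can be deformed, rel $S^{k-1}$, into $\Riem^{+}_{\std}(X)$. The deformation is produced by a parametrized Gromov-Lawson procedure: first, using the exponential map of $f(s)$, one isotopes each metric near $S^{p}$ into the form of an embedded hypersurface in an ambient product $(S^{p}\times \bR^{q+1}, ds_{p}^{2}+dr^{2}+r^{2}ds_{q}^{2})$; next one bends this hypersurface radially toward $S^{p}$ to carve out a round torpedo core; finally one interpolates across a thin collar back to the original metric. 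The codimension condition $q+1\geq 3$ ensures positivity of the scalar curvature is maintained throughout, because the positive contribution from the spherical cross-section $S^{q}$ dominates any small negative terms picked up during bending. Since $D^{k}$ is compact, all curvature estimates hold uniformly in $s$, and by choosing the bending profile and interpolation cutoff to be continuous in $s$ and trivial wherever $f(s)$ is already standard, the construction produces the required relative homotopy.

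Granted this inclusion is a weak equivalence (by the symmetric argument the same holds with $X'$ in place of $X$), the chain
\[
\Riem^{+}(X)\simeq \Riem^{+}_{\std}(X)\cong \Riem^{+}_{\std}(X')\simeq \Riem^{+}(X')
\]
yields the weak homotopy equivalence of the theorem. Since $\Riem(X)$ is a Fr\'echet manifold and $\Riem^{+}(X)$ is open inside it, both ambient spaces have the homotopy type of CW complexes, so this weak equivalence can be upgraded to an honest homotopy equivalence as claimed.

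The principal obstacle is the parametrized Gromov-Lawson deformation itself. The original Gromov-Lawson argument produces, for a single input metric, a psc-metric in standard form through a delicate geometric construction. One must now perform this in a family, with all geometric data (the radius of the tubular neighborhood, the bending angle and profile, the torpedo radius, the interpolation cutoff) depending continuously on $s\in D^{k}$, arranged so that the deformation is trivial whenever $f(s)$ is already standard. Controlling the curvature estimates uniformly in $s$ in the $C^{\infty}$-topology, and ensuring that the terminal metric genuinely \emph{equals} the standard form on the entire tubular neighborhood rather than being merely close to it, constitute the main technical difficulty and account for the bulk of the work in both \cite{Che} and \cite{Walsh3}.
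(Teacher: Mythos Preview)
Your overall strategy matches the paper's: introduce a standardized subspace, exhibit the homeomorphism $\Riem^{+}_{\std}(X)\cong \Riem^{+}_{\std}(X')$, and reduce everything to showing the inclusion $\Riem^{+}_{\std}(X)\hookrightarrow \Riem^{+}(X)$ is a weak equivalence, then upgrade via Palais. The CW-domination remark and the symmetric use of $p,q\geq 2$ are also as in the paper.

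There is, however, a genuine gap in your relative-homotopy argument. You assert that the parametrized Gromov--Lawson deformation can be chosen to be ``trivial wherever $f(s)$ is already standard.'' This is exactly the point at which the naive argument breaks down, and the paper is explicit about it. The Gromov--Lawson isotopy, applied to an already-standard metric, does \emph{not} fix that metric: the bending curve $\gamma$ and the subsequent adjustment alter the warping function on the disk fibers, so a metric in $\Riem^{+}_{\std}(X)$ is temporarily pushed \emph{out} of $\Riem^{+}_{\std}(X)$ during the isotopy. You cannot simply declare the deformation to be the identity on $f(S^{k-1})$ and the full GL construction elsewhere, because the GL parameters (curve, radii, cutoffs) must vary continuously over $D^{k}$, and there is no continuous interpolation between ``do nothing'' and ``do GL'' that stays inside $\Riem^{+}_{\std}(X)$.

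The paper's remedy is to introduce an intermediate space of \emph{almost standard} metrics,
\[
\Riem^{+}_{\std}(X)\subset \Riem^{+}_{\Astd}(X)\subset \Riem^{+}(X),
\]
defined via a class $\cW$ of warping functions on the disk fibers that is closed under the GL construction. One then proves two separate facts: (i) the GL deformation moves any compact family into $\Riem^{+}_{\Astd}(X)$ while keeping already-almost-standard metrics inside $\Riem^{+}_{\Astd}(X)$ throughout (Lemma~\ref{invariantalmost}), giving $\Riem^{+}_{\Astd}(X)\simeq\Riem^{+}(X)$; and (ii) a separate stretching-and-linear-homotopy argument at the level of warping functions shows $\Riem^{+}_{\std}(X)\simeq\Riem^{+}_{\Astd}(X)$. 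Without this intermediate space (or an equivalent device), your proposed deformation does not give a map of pairs $(D^{k},S^{k-1})\to(\Riem^{+}(X),\Riem^{+}_{\std}(X))$ at every stage, so the relative homotopy groups are not shown to vanish.
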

\noindent  This theorem was originally proved by Chernysh in \cite{Che} but was never published. Later, this author provided a short version of the proof in \cite{Walsh3} based on work done in \cite{Walsh1}. Admittedly, this version was rather terse and did not adequately address all details. Quite recently however, Ebert and Frenck have provided a comprehensive proof of this theorem; see \cite{EF}. Their paper also contains a strengthening of another relevant result of Chernysh \cite{Che2} as well as a correction to a computational error found in expositions of the original Gromov-Lawson Surgery Theorem (\cite{RS}, \cite{Walsh1}). Theorem \ref{Chernysh} and the techniques used to prove it play a fundamental role in proving the results of this paper. Indeed, Theorem A is effectively a generalisation of Chernysh's theorem to work for certain types of ``boundary surgery". Thus, it will be necessary to provide an overview of the main steps in proving Theorem \ref{Chernysh} as well as the original Gromov-Lawson construction.  

We close by recalling some fundamental questions which motivate this work.\vspace{0.2cm} 
\begin{enumerate}
\item[(1.)] { Given some $g\in\Riem^{+}(\p W)$, is the space $\Riem^{+}(W, \p W)_{g}$ non-empty?}\vspace{0.2cm}
\item[(2.)] { If $\Riem^{+}(W, \p W)_{g}\neq \emptyset$, what can we say about its topology?}\vspace{0.2cm}
\item[(3.)] { What can we say about the topology of the space $\Riem^{+}(W, \p W)$?}\vspace{0.2cm}
\end{enumerate}
Although not strictly the focus of this work, Question (1.) is relevant here as its answer is often negative. For example, the methods used by Carr in \cite{Carr} give rise to psc-metrics on $S^{4k-1}$ which do not extend to elements of $\Riem^{+}(D^{4k})$, for all $k\geq 2$. Questions (2.) and (3.) are posed in problem 3, section 2.1 of the survey article \cite{RS}, and our results are a contribution to answering these questions.

\subsection{Acknowledgements} This work was originally intended as an appendix to the paper, \cite{BERW}, by B. Botvinnik, J. Ebert and O. Randal-Williams but grew into a larger project of independent interest. I would like to express gratitude to all three authors for their comments and suggestions as this project developed, and for the original invitation to submit this work as an appendix. I would like to express my utmost gratitude B. Botvinnik, for originally suggesting this project to me and for some significant contributions to its development. A number of insights relating to this work were gained by attendance of the workshop ``Analysis and Topology in Interaction" in Cortona, June 2014. My thanks to the organisers, epecially W. L{\"u}ck, P. Piazza and T. Schick, for their kind invitation. Finally, it is a pleasure to thank D. Wraith and C. Escher for their helpful comments.

\section{Preliminary Details}\label{prelim}
In this section we will take care of preliminary details concerning certain objects and notions which will be used throughout the paper. In particular, we will recall the notions of isotopy and concordance on spaces of psc-metrics. Unless otherwise stated, all manifolds in this paper are smooth and compact. In particular, $X$ always denotes a smooth closed manifold of dimension $n$, while $W$ denotes a smooth compact $(n+1)$-dimensional manifold with a non-empty closed boundary; usually $\p W=X$. 

\subsection{Spaces of Metrics} 
Given a smooth compact $n$-dimensional manifold $X$, we denote by $\Riem(X)$, the space of all Riemannian metrics on $X$. The space $\Riem(X)$ is equipped with the standard $C^{\infty}$-topology, giving it the structure of a Fr{\'e}chet manifold; see chapter 1 of \cite{TW} for details. There are various moduli spaces one may wish to study, obtained by quotienting $\Riem(X)$ by the usual action of some or other subgroup of the diffeomorphism group $\Diff(X)$; see \cite{TW}. In this paper however, we will only focus on $\Riem(X)$ proper. For each metric $g\in \Riem(X)$, we denote by $s_{g}:X\rightarrow \mathbb{R}$, the smooth function which is the scalar curvature on $X$ of the metric $g$. Finally, we denote the space of metrics of positive scalar curvature (psc-metrics) on $X$ by:
$$\Riem^{+}(X):=\{g\in \Riem(X): s_{g}>0\}.$$
This is the open subspace of $\Riem(X)$ consisting of Riemannian metrics on $X$ whose scalar curvature function is everywhere positive. As mentioned in the introduction, the space $\Riem^{+}(X)$ may or may not be empty. Throughout this paper however, the reader should assume we are working with a manifold $X$ for which $\Riem^{+}(X)\neq \emptyset$.

Suppose that $X$ forms the boundary of a smooth compact $(n+1)$-dimensional manifold $W$; thus $\p W=X$. As above, we denote by $\Riem(W)$, the space of Riemannian metrics on $W$ under the usual $C^{\infty}$-topology. However, to make our work meaningful we require an additional constraint on metrics near the boundary. We specify a collar, $c:X \times  [0,2) \rh W$, of the boundary $\p W=X$. Thus, $c$ is an embedding and $c(X\times\{0\})=\p W\subset W$. Letting $I=[0,1]$, we denote by $\Riem(W, \p W)$, the subspace of all Riemannian metrics on $W$
  which restrict as a product structure on the
  image $c\left(X \times  I\right)$. Thus, for any $\bar{g}\in\Riem(W)$, $\bar{g}\in \Riem(W, \p W)$ if $c^{*}\bar{g}=g+dt^{2}$ on
    $X\times I$ for some
      $g\in\Riem(X)$. 
The corresponding space of psc-metrics on $W$, denoted $\Riem^{+}(W, \p W)$, is now defined by:
      $$\Riem^{+}(W, \p W):=\{\bar{g}\in \Riem(W, \p W): s_{\bar{g}}>0\}.$$
For each $h\in \Riem^{+}(X)$, we denote by $\Riem^{+}(W, \p W)_h\subset \Riem^{+}(W, \p W)$ the subspace of all psc-metrics $\bar{g}\in\Riem^{+}(W, \p W)$ where $(c^{*}\bar{g})|_{X\times \{0\}}=h$. It is important to remember that the space $\Riem^{+}(W, \p W)$ may be empty. Moreover, even when $\Riem^{+}(W, \p W)\neq \emptyset$, it is possible that for some $h\in \Riem^{+}(X)$, the space 
$\Riem^{+}(W, \p W)_h$ is empty. As mentioned in the introduction, the problem of deciding for a given $h\in \Riem^{+}(X)$, whether or not the space $\Riem^{+}(W, \p W)_h$ is non-empty is highly non-trivial. Although we assume that $\p W=X$ admits psc-metrics, we will make no a priori assumptions about the emptiness or otherwise of the spaces $\Riem^{+}(W, \p W)$ or $\Riem^{+}(W, \p W)_h$. 

Another way of thinking about all of this is to consider the natural restriction map: $$\res: \Riem^{+}(W, \p W)\longrightarrow \Riem^{+}(\p W), $$ where $\res(\bar{g})=\bar{g}|_{\p W}$. Thus, $\res^{-1}(h)=\Riem^{+}(W, \p W)_h$. It is fact, due to Chernysh \cite{Che2} and Ebert and Frenck \cite{EF} that the map $\res$ is actually a Serre Fibration. This is something we will make use of later on where we draw conclusions about the space $\Riem^{+}(W, \p W)$ based on results about  $\Riem^{+}(W, \p W)_h$.

We close this section by considering a special case of a manifold with boundary. Consider an $(n+1)$-dimensional manifold $Z$ whose boundary $\partial Z=X_0\sqcup X_1$, is a disjoint union of closed $n$-dimensional manifolds. Thus, $Z$ is a cobordism of $X_0$ and $X_1$, sometimes denoted as the triple $(Z; X_0, X_1)$. Here we specify a pair of disjoint collars $c_0: X_0 \times [0,2) \rh Z$, $c_1: X_1\times [0,2) \rh Z$ around $X_0$ and $X_1$ respectively. In this case, $\Riem(Z, \p Z)$ denotes the space of Riemannian metrics on $Z$ which restrict as a product structure on each of the neighbourhoods $c(X_{i}\times I)$, where $i=0,1$. Thus, $\bar{g}\in\Riem(Z, \p Z)$ satisfies:
$$c_0^{*}\bar{g}=g_0+dt^{2} \text{ on } X_0\times I\quad \text{ and }\quad  c_1^{*}\bar{g}=g_1+dt^{2} \text{ on } X_1\times I, $$ 
for some pair of metrics $g_0\in \Riem(X_0)$ and $g_1\in \Riem(X_1)$.
As usual, the corresponding space of psc-metrics on $Z$ is denoted:
$$\Riem^{+}(Z, \p Z):=\{\bar{g}\in \Riem(Z, \p Z): s_{\bar{g}}>0\}.$$
After fixing a pair of psc-metrics $g_0\in\Riem^{+}(X_0)$ and $g_1\in \Riem^{+}(X_1)$, we consider the subspace $\Riem^{+}(Z, \p Z)_{g_0, g_1}\subset \Riem^{+}(Z, \p Z) $ defined as follows:
$$\Riem^{+}(Z, \p Z)_{g_0, g_1}:=\{ \bar{g}\in\Riem^{+}(Z, \p Z):
c_i^{*}\bar{g}=g_i+dt^{2} \text{ on } X_i\times [0,1], \text{ where } i=0,1\}.$$ 
As before, we point out that the space $\Riem^{+}(Z, \p Z)_{g_0, g_1}$ may be empty.

\subsection{Weak homotopy equivalence} Most of the results of this paper involve exhibiting {weak homotopy equivalence} between topological spaces. Recall that a continuous map $f:A\rightarrow B$ of topological spaces $A$ and $B$ is a {\em weak homotopy equivalence} if it induces isomorphisms $\pi_{m}(A, a_0)\rightarrow \pi_{m}(B, f(a_0))$ for all $m\geq 0$ and all choices of basepoint $a_0\in A$. In the case of an inclusion $A\subset B$, this is equivalent to showing that the relative homotopy groups $\pi_{m}(B,A)$ are trivial for all $m$. Recall that an element $\alpha\in \pi_{m}(B,A)$ is a homotopy class of commutative diagrams of continuous maps:
\begin{equation*}
\xymatrix{
  S^{m-1} \ar[d]\ar@{^{(}->}[r]  & D^{m} \ar[d]
  \\
 A \ar@{^{(}->}[r]  &B }
\end{equation*}   
Thus to show that $\alpha$ is trivial, we must show that any such commutative diagram is homotopy equivalent to one where the image of the right vertical map lies entirely in $A$. Put another way, if $f:D^{m}\rightarrow B$ is a continuous map satisfying $f(x)\in A$ for all $x\in \p D^{m}=S^{m-1}$, we must exhibit a homotopy $F:I\times D^{m}\rightarrow A$ so that:
\begin{enumerate}
\item[(i.)] $F(0,x)=f(x)$ for all $x\in D^{m}$, 
\item[(ii.)] $F(1,x)\in A$ for all $x\in D^{m}$,
\item[(iii.)] $F(\tau,x)\in A$ for all $x\in \p D^{m}=S^{m-1}$ and all $\tau\in I$.
\end{enumerate}
In all cases in this paper, we will demonstrate weak homotopy equivalence of an inclusion $A\subset B$ by proving the following more general but entirely sufficient condition. That is, that $A\subset B$ will be a weak homotopy equivalence if for any compact space $K$ and any continuous map $f:K\rightarrow B$, there is a homotopy of $F:I\times K\rightarrow B$ satisfying:
\begin{enumerate}
\item[(i.)] $F(0,k)=f(k)$ for all $k\in K$ 
\item[(ii.)] $F(1,k)\in A$ for all $k\in K$,
\item[(iii.)] $F(\tau,k)\in A$ for all $k\in K$ satisfying $F(0, k)\in A$ and all $\tau\in I$.
\end{enumerate}


\subsection{Isotopy, concordance and compact families}\label{isoconcfam}
We will concentrate initially on the space $\Riem^{+}(X)$, of positive scalar curvature metrics on $X$. Everything we say here has an obvious analogue in terms of the spaces $\Riem^{+}(W, \p W)$, $\Riem^{+}(W, \p W)_g$, $\Riem(Z, \p Z)$ and $\Riem^{+}(Z, \p Z)_{g_0, g_1}$ defined above. A {\em compact family} of psc-metrics is a continuous map, $K\rightarrow \Riem^{+}(X)$, where $K$ is some compact space. We say that the family is parameterised by the space $K$. In this paper we will be concerned only with the case when $K$ is a disk. In particular, an important special case of this is when $K$ is the interval $I=[0,1]$. Two psc-metrics $g_0, g_1\in\Riem^{+}(X)$ are {\em isotopic} if there exists a path, $I\longrightarrow\Riem^{+}(X)$, defined by $t\mapsto g_t$, connecting $g_0$ to $g_1$. Such a path is called an {\em isotopy}. Two psc-metrics $g_0, g_1\in\Riem^{+}(X)$ are said to be {\em concordant} if there is a psc-metric on the cylinder $X\times I$ which takes the form of a product $g_0+dt^{2}$ and $g_1+dt^{2}$ near the respective ends $X\times\{0\}$ and $X\times \{1\}$. It will often be useful to use an equivalent form of the  definition, namely that two psc-metrics $g_0, g_1\in\Riem^{+}(X)$ are {\em concordant} if, for some $L>0$, there is a psc-metric $\bar{g}$ on the cylinder $X\times[0,L+2]$ so that: $$ \bar{g}|_{X\times[0,1]}=g_0+dt^{2}\quad\text{and}\quad \bar{g}|_{X\times[L+1,L+2]}=g_0+dt^{2}.$$ The metric $\bar{g}$ is called a {\em concordance} of $g_0$ and $g_1$. Defining collars $c_0:X\times [0,2)\rh X\times[0,L+2]$ and $c_1:X\times [0, 2)\rh X\times[0,L+2]$ so that:
$$c_0|_{X\times [0,1]}(x,t)=(x,t) \text{ and } c_1|_{X\times [0,1]}(x,t)=(x, L+2-t),$$
we see that $\bar{g}\in\Riem^{+}(X\times[0,L+2], (X\times\{0\})\sqcup (X\times\{L+2\}) )_{g_0, g_1}$, the space of concordances between $g_0$ and $g_1$ on $X\times[0,L+2]$. 

It is a well-known fact that isotopic metrics are concordant; see Lemma 3 of \cite{GL}. In particular, there is a fairly straightforward process for turning an isotopy into a concordance. Suppose $t\mapsto g_t\in\Riem^{+}(X)$, where $t\in I$, is an isotopy. Consider initially the metric $g_t+dt^{2}$ on $X\times I$. This metric does not necessarily have positive scalar curvature, as negative curvature may arise in the $t$-direction. It also likely lacks the appropriate product structure near the boundary. However, by appropriately ``slowing down" change in the $t$ direction we can minimize negative curvature and use the slices to obtain overall positivity. This is the subject of the following lemma, which allows us to turn an isotopy into a concordance. Various versions of this lemma may be found in the literature; see for example  \cite{Gajer} or \cite{GL}. 

\begin{lemma}\label{isotopyimpliesconc}
Let $g_{r}, r\in I$ be a smooth path in $\Riem^{+}(X)$. Then there exists a constant $0<\Lambda\leq 1$ so that
for every smooth function $f:\mathbb{R}\rightarrow[0,1]$ with
$|\dot{f}|,|\ddot{f}|\leq\Lambda$, the metric $\bar{g}=g_{f(t)}+dt^{2}$ on
$X\times\mathbb{R}$ has positive scalar curvature.
\end{lemma}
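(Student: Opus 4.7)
The plan is to compute the scalar curvature of $G=g_{f(t)}+dt^{2}$ explicitly and to show that the ``time-direction'' correction terms can be made uniformly small by shrinking $|\dot f|$ and $|\ddot f|$, while the leading ``spatial'' contribution $\scal(g_{f(t)})$ is bounded uniformly below by compactness. The whole argument reduces to a standard scalar-curvature identity plus two easy compactness estimates.

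First I would establish the compactness bounds that drive the argument. Since $I$ is compact and the path $r\mapsto g_r$ takes values in the open subset $\Riem^{+}(X)\subset \Riem(X)$, the continuous function $(r,x)\mapsto \scal(g_r)(x)$ attains a strictly positive minimum $\sigma>0$ on the compact set $I\times X$. Simultaneously, the tensors $g_r$, $g_r^{-1}$, $\partial_r g_r$ and $\partial_r^{2} g_r$ are uniformly pointwise bounded on $I\times X$ in terms of a single constant depending only on the path.

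Next I would invoke the standard formula for the scalar curvature of a ``stacked'' metric $\bar g=g(t)+dt^{2}$: viewing each slice $X\times\{t\}$ as a hypersurface with unit normal $\partial_t$, one has
\[
\scal(\bar g) \;=\; \scal(g(t)) \;-\; 2\,\partial_t H \;-\; H^{2} \;-\; |A|^{2}_{g(t)},
\]
where $A=\tfrac{1}{2}\partial_t g(t)$ is the second fundamental form of the slice and $H=\mathrm{tr}_{g(t)}A$ its mean curvature. Each correction term is polynomial in $g(t)^{-1}$, $\partial_t g(t)$ and $\partial_t^{2} g(t)$. Applying this with $g(t)=g_{f(t)}$ and using the chain rule,
\[
\partial_t g_{f(t)} \;=\; \dot f\cdot (\partial_r g_r)\big|_{r=f(t)}, \qquad \partial_t^{2} g_{f(t)} \;=\; \ddot f\cdot (\partial_r g_r)\big|_{r=f(t)} \;+\; \dot f^{2}\cdot (\partial_r^{2} g_r)\big|_{r=f(t)},
\]
the compactness bounds from the previous paragraph yield an estimate of the form
\[
\bigl|\scal(G) - \scal(g_{f(t)})\bigr| \;\leq\; C'\bigl(|\dot f|^{2} + |\ddot f|\bigr)
\]
for a constant $C'$ depending only on the path $r\mapsto g_r$ (note that $|\dot f|$-linear contributions coming from $H$ and $\partial_t H$ are absorbed into the $|\dot f|^2$ and $|\ddot f|$ pieces because $|\dot f|\leq 1$, so $|\dot f|\leq |\dot f|^{2}+1\cdot|\dot f|$ is subsumed once we have a factor of $\Lambda\leq 1$ available). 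Hence $\scal(G)\geq \sigma-C'(\Lambda^{2}+\Lambda)$ whenever $|\dot f|,|\ddot f|\leq \Lambda$, and any $\Lambda\in (0,1]$ with $C'(\Lambda^{2}+\Lambda)<\sigma$ completes the proof.

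The main obstacle I anticipate is simply the bookkeeping of the scalar-curvature identity with correct signs and factors, since a sign error in $-H^{2}$ or $-|A|^{2}$ would reverse the direction of the inequality; one also has to be careful that the linear-in-$\dot f$ terms coming from $H$ do not spoil the estimate, which is why the hypothesis $|\dot f|\leq \Lambda\leq 1$ (rather than just ``$\dot f$ small in $L^{\infty}$'') is essential. Everything else---the minimum principle on $I\times X$, the chain-rule expansion, and the final quadratic inequality for $\Lambda$---is mechanical once that identity is in hand.
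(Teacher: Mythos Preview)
Your proposal is correct and follows essentially the same strategy as the paper: compute $\scal(G)$ as $\scal(g_{f(t)})$ plus correction terms controlled by $|\dot f|$ and $|\ddot f|$, then invoke compactness of $I\times X$. The only difference is packaging---the paper derives the estimate by computing Christoffel symbols directly in normal coordinates (obtaining $\bar R = R + O(|\dot f|)+O(|\dot f|^{2})+O(|\ddot f|)$), whereas you quote the hypersurface scalar-curvature formula $\scal(\bar g)=\scal(g(t))-2\partial_t H-H^{2}-|A|^{2}$; your route is slightly cleaner and in fact yields the sharper bound $C'(|\dot f|^{2}+|\ddot f|)$ with no genuine linear-in-$\dot f$ term (your parenthetical worry about such terms is unnecessary, since $H$ itself is $O(|\dot f|)$ and only enters through $H^{2}$ and $\partial_t H$).
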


It is useful to have a well-defined way of obtaining a concordance from an isotopy. Indeed, we will require a method of converting a compact family of continuously parameterised isotopies into a corresponding family of continuously parameterised concordances. With this in mind, we fix a family of
appropriate smooth cut-off functions $\nu_{L}:[0,L+2]\rightarrow
[0,1]$, with $L> 0$, as shown in Fig. \ref{cutofffunc}. Each function is non-decreasing and satisfies $\nu_L(t)=0$ when $t\in [0,1]$ and $\nu_L(t)=1$ when $t\in[0,L+2]$.
This is best done by specifying $\nu_{1}$ and then defining $\nu_{L}$ by:
\[ 
\nu_{L}(t)=
\begin{cases} 
      0 &  0\leq t\leq 1, \\
       \nu_{1}(\frac{t+L-1}{L})& 1\leq t\leq L+1, \\
      1 & L+1\leq t\leq L+2.
   \end{cases}
\]
Replacing $f$ in Lemma \ref{isotopyimpliesconc} with $\nu_{L}$, there is a constant $\Lambda$, so that the scalar curvature of the metric $g_{{\nu_{L}(t)}}+dt^{2}$ on $X\times[0,L+2]$ is positive whenever 
$|\dot{{\nu_{L}}}|,|\ddot{{\nu_{L}}}|\leq\Lambda.$ By choosing sufficiently large $L>0$ these inequalities can be made to hold, resulting in a psc-metric on $X\times[0,L+2]$: the desired concordance.  

\begin{figure}[htb!]
\vspace{0cm}
\hspace{0cm}
\begin{picture}(0,0)%
\includegraphics{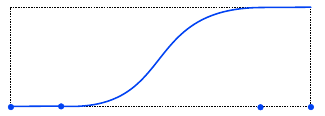}%
\end{picture}%
\setlength{\unitlength}{3947sp}%
\begingroup\makeatletter\ifx\SetFigFont\undefined%
\gdef\SetFigFont#1#2#3#4#5{%
  \reset@font\fontsize{#1}{#2pt}%
  \fontfamily{#3}\fontseries{#4}\fontshape{#5}%
  \selectfont}%
\fi\endgroup%
\begin{picture}(2500,1000)(1902,-4570)
\put(1900,-4700){\makebox(0,0)[lb]{\smash{{\SetFigFont{10}{8}{\rmdefault}{\mddefault}{\updefault}{\color[rgb]{0,0,0}$0$}%
}}}}
\put(2350,-4700){\makebox(0,0)[lb]{\smash{{\SetFigFont{10}{8}{\rmdefault}{\mddefault}{\updefault}{\color[rgb]{0,0,0}$1$}%
}}}}
\put(1800,-4500){\makebox(0,0)[lb]{\smash{{\SetFigFont{10}{8}{\rmdefault}{\mddefault}{\updefault}{\color[rgb]{0,0,0}$0$}%
}}}}
\put(1800,-3800){\makebox(0,0)[lb]{\smash{{\SetFigFont{10}{8}{\rmdefault}{\mddefault}{\updefault}{\color[rgb]{0,0,0}$1$}%
}}}}
\put(3700,-4700){\makebox(0,0)[lb]{\smash{{\SetFigFont{10}{8}{\rmdefault}{\mddefault}{\updefault}{\color[rgb]{0,0,0}$L+1$}%
}}}}

\put(4300,-4700){\makebox(0,0)[lb]{\smash{{\SetFigFont{10}{8}{\rmdefault}{\mddefault}{\updefault}{\color[rgb]{0,0,0}$L+2$}%
}}}}
\end{picture}%
\caption{The cutoff function $\nu_L$ }
\label{cutofffunc}
\end{figure} 

Lemma \ref{isotopyimpliesconc} and the concordance construction described above work just as well for compact families. More precisely, let $K$ be a compact space and $K\times I\rightarrow \Riem^{+}(X)$, $(k,t)\mapsto g_{k,t}\in\Riem^{+}(X)$ be a compact family of psc-metrics. This is best thought of as a compact family of isotopies $t\mapsto g_{k, t}$ parameterised by $k\in K$. In turn, this leads to a parameterised analogue, $\Lambda_{k}$, of the $\Lambda$ term above. By compactness, there is a constant $\Lambda_{K}>0$ satisfying $\Lambda_{k}\geq \Lambda_{K}$ for all $k\in K$. Then by choosing $L>0$ so that $|\dot{{\nu_{L}}}|,|\ddot{{\nu_{L}}}|\leq\Lambda_{K}$, we guarantee that the metric $g_{k, {\nu_{L}(t)}}+dt^{2}$ has positive scalar curvature for all $k\in K$. This gives us the following lemma.
\begin{lemma}\label{welldefconc} Let $K$ be a compact space and let $g_{k,t}\in\Riem^{+}(X)$ denote a continuous family of metrics with respect to the parameter $(k,t)\in K\times I$. Then there is a constant $L_{K}\geq 1$, for which the map $K\rightarrow \Riem^{+}(X\times[0,L+2], X\times\{0\}\sqcup X\times\{L_{K}+2\} )$ defined by:
$$k\longmapsto \bar{g}_{k}:=g_{k, {\nu_{L_{K}}(t)}}+dt^{2},$$
defines a continuous family of concordances on $X\times [0,L_{K}+2]$.
\end{lemma}
\noindent The constant $L_{K}$ above will also bound the first and second derivatives of $\tau\nu_{L_{K}}$ where $\tau\in[0,1]$ is a constant. Thus, we obtain the following useful corollary.
\begin{corollary}\label{conchomot}
The map $K\times I\rightarrow \Riem^{+}(X\times[0,L+2], X\times\{0\}\sqcup X\times\{L_{K}+2\} )$ defined by:
$$ (k,\tau)\longmapsto \bar{g}_{k,\tau}:= g_{k, {\tau\nu_{L_{K}}(t)}}+dt^{2},$$
determines a homotopy through concordances between the family of trivial concordances $g_k+dt^{2}$ on $X\times [0,L_{K}+2]$ and the family $\bar{g}_{k}$ above, where $k\in K$.
\end{corollary}

Although we frequently consider concordances on cylinders of the form $X\times[0, L+2]$, for some $L>0$, it is worthwhile having a means of viewing all such concordances on the same cylinder, $X\times I$. With this in mind, we specify a family of diffeomorphisms:
$$\xi_{L}:[0,1]\longrightarrow [0,L],$$
parameterised by $L>0$ and satisfying:
\begin{enumerate}
\item[(i.)] $\xi_{L}(t)=t$ when $t$ is near zero,
\item[(ii.)] $\xi_{L}(t)=L-1+t$ when $t$ is near $1$.
\end{enumerate}
Thus, any concordance $\bar{g}\in\Riem^{+}(X\times[0,L+2], X\times\{0\}\sqcup X\times\{L_{K}+2\} )$ gives rise to a concordance, $(\Id_{X}\times\xi_{L+2})^{*}\bar{g}$ on $X\times I$. Later, when dealing various concordances on cylinders $X\times[0,L+2]$ for varying $L$, we will make use of this identification to compare concordances on the same space.

\section{Standard metrics on the disk and sphere}\label{disksection}

In this section we recall some well known standard metrics on the disk and sphere. These metrics will be rotationally symmetric and include ``torpedo metrics" on the disk as well as a generalisation called an ``almost torpedo metric".

\subsection{Embeddings, Torpedos and Warping Functions}
For any $\rho>0$, we denote by $D^{n}(\rho):=\{x\in\mathbb{R}^{n}:|x|\leq \rho\}$ and $S^{n}(\rho):=\{x\in\mathbb{R}^{n+1}:|x|=\rho\}$, the $n$-dimensional Euclidean disk and sphere of radius $\rho$. As usual, $D^{n}:=D^{n}(1)$ and $S^{n}:=S^{n}(1)$ denote the standard unit objects. Let $ds_{n}^{2}$ denote the standard round metric of radius $1$ on $S^{n}$. This metric may be obtained by the embedding into Euclidean space:
\begin{equation*}
\begin{split}
(0,\pi)\times{S^{n-1}}&\longrightarrow\mathbb{R}\times\mathbb{R}^{n}\\
(r,\theta)&\longmapsto(\cos{r},\sin{r}.\theta),
\end{split}
\end{equation*}
and computed as:
\begin{equation*}
ds_n^{2}=dr^{2}+\sin^{2}(r)ds_{n-1}^{2}.
\end{equation*}
Strictly speaking, in these coordinates this metric is defined on the cylinder
$(0,\pi)\times S^{n-1}$. However, the behaviour of the function, $\sin$,
near the end points of the interval $(0,\pi)$ gives the cylinder
$(0,\pi)\times S^{n-1}$ the geometry of a round $n$-dimensional sphere which is missing a pair of antipodal points. Such a metric
extends uniquely onto the sphere. 

We now consider a generalisation of this embedding. We begin by replacing $\cos(r)$ with $\alpha(r)$ and $\sin(r)$ with $\beta(r)$, where $\alpha, \beta:[0,b]\rightarrow [0,\infty]$ are smooth functions satisfying the following conditions:
\begin{enumerate}
\item[]\begin{equation}
\begin{array}{clll}\label{beta0}
\mathrm{(i.)} \quad &\beta(r)>0, \text{ for all } r\in(0,b), &&\\
\mathrm{(ii.)} \quad &\beta(0)=0, \quad\dot{\beta}(0)=1, \quad\beta^{(even)}(0)=0,&&\\
\mathrm{(iii.)} \quad &\beta(b)=0, \quad\dot{\beta}(b)=-1, \quad\beta^{(even)}(b)=0.&&
\end{array}
\end{equation}
\item[]\begin{equation}\label{alpha0}
\alpha(r)=\alpha_0-\int_{0}^{r}\sqrt{1-\dot{\beta}(u)^{2}}du, \quad \text{ where $\alpha_0=\int_{0}^{\frac{b}{2}}\sqrt{1-\dot{\beta}(u)^{2}}du$.  }
\end{equation}
\end{enumerate}
The functions $\alpha$ and $\beta$ behave like $\cos$ and $\sin$ at the endpoints. Moreover, $\alpha$ is determined completely by $\beta$ so as to satisfy $\dot{\alpha}^{2}+\dot{\beta}^{2}=1$. Thus, the curve $[0,b]\rightarrow \mathbb{R}^{2}$ given by $r\mapsto (\alpha(r), \beta(r))$ is a unit speed curve. The constant $\alpha_0$ is somewhat arbitrary; see remark \ref{alphaconstant}. We now consider the map, $F_{\beta}$, defined by:
\begin{equation*}
\begin{split}
F_\beta:(0,b)\times{S^{n-1}}&\longrightarrow\mathbb{R}^{n}\times\mathbb{R},\\
(r,\theta)&\longmapsto(\beta(r).\theta, \alpha(r)).
\end{split}
\end{equation*}
\begin{proposition}
For any smooth functions $\alpha, \beta:[0,b]\rightarrow [0,\infty)$ satisfying the conditions laid out in \ref{alpha0} and \ref{beta0}, the map $F_\beta$ above is an embedding.
\end{proposition}
\begin{proof}
Injectivity of $F_{\beta}$ is guaranteed by the fact that
$\beta(r)>0$ when $r\in(0,b)$ and that $\alpha$ is strictly monotonic. The maximality of
the rank of the derivative of $F_{\beta}$ follows from an easy
calculation. 
\end{proof}
\noindent The induced metric $g_{\beta}$, obtained by pulling back the Euclidean metric on $\mathbb{R}^{n+1}$ via $F_{\beta}$, is computed as:
\begin{equation*}
\begin{split}
g_{\beta}:=F_{\beta}^{*}(dx_1^{2}+dx_{2}^{2}+\cdots+dx_{n}^{2}+dx_{n+1}^{2})&=(\dot{\alpha}(r)^{2}+\dot{\beta}(r)^{2})dr^{2}+\beta(r)^{2}ds_{n-1}^{2}\\
&=dr^{2}+\beta(r)^{2}ds_{n-1}^{2}.
\end{split}
\end{equation*}
The following proposition is proved in Chapter 1, Section 3.4 of \cite{P}.
\begin{proposition}
Provided the smooth function $\beta:[0,b]\rightarrow [0,\infty)$ satisfies the conditions laid out in \ref{beta0}, the metric $g_{\beta}$ extends uniquely to a rotationally symmetric metric on $S^{n}$. Furthermore, if we drop condition (iii) of \ref{beta0} and simply insist that $\beta(b)>0$, this metric is now a smooth rotationally symmetric metric on the disk $D^{n}$.
\begin{remark}\label{alphaconstant}
The constant $\alpha_0$ in the definition of $\alpha$ above is defined simply to ``centre" the image of $F_{\beta}$ around the origin. Replacing it with zero or any other constant would not affect the induced metric. 
\end{remark}
\end{proposition}
A straightforward calculation gives that the scalar curvature, $s_\beta$, of the warped product metric, $dr^{2}+\beta(r)^{2}ds_{n-1}^{2}$, is given by the formula: 
\begin{equation}\label{Rcurv}
s_\beta(r,\theta)=-2(n-1)\frac{{\ddot{\beta}}(r)}{{\beta}(r)}+(n-1)(n-2)\frac{1-{\dot{\beta}}(r)^{2}}{{\beta}(r)^{2}}.
\end{equation}
Below we set out some sufficient conditions on the smooth function $\beta:[0,b]\rightarrow[0,\infty)$, which along with \ref{beta0} guarantee the metric $g_{\beta}$ has positive scalar curvature.
\begin{equation}\label{beta}
\begin{array}{ll}
\mathrm{(i.)} &\ddot{\beta}\leq 0 \text{ and } \dddot{\beta}(0)<0.\\ 
\mathrm{(ii.)} &\text{When $r$ is near but not at $0$}, \ddot{\beta}(r)<0.\\
\mathrm{(iii.)} &\dddot{\beta}(b)>0, \text{ while } \ddot{\beta}(r)<0  \text{ when $r$ is near but not at $b$}.
\end{array}
\end{equation}
In \cite[Proposition
  1.6]{Walsh1} we prove the following.
\begin{proposition}
Let $n\geq 3$. For any  smooth function $\beta:[0,b]\rightarrow[0,\infty)$ satisfying conditions \ref{beta0} and \ref{beta} above, the metric $dr^{2}+\beta(r)^{2}ds_{n-1}^{2}$ on $(0,b)\times S^{n-1}$ determines a smooth rotationally symmetric metric on $S^{n}$ with positive scalar curvature. Furthermore, if we drop condition (iii) of \ref{beta0} and instead insist that $\beta(b)>0$, this metric determines a smooth rotationally symmetric psc-metric on $D^{n}$.
\end{proposition}

We now consider an important example of a rotationally symmetric metric on the disk.
For any $\delta>0$ and $\lambda\geq 0$, let $\eta_{\delta,\lambda}:[0, \frac{\pi}{2}+\lambda]\rightarrow[0,1]$ be {\bf any} smooth function
which satisfies the following conditions:
\begin{enumerate}
\item[(i.)] $\eta_{\delta,\lambda}(r)=\delta\sin{\frac{r}{\delta}}$ when $r$ is near $0$,
\item[(ii.)] $\eta_{\delta, \lambda}(r)=\delta$ when $r\geq\delta\frac{\pi}{2}$,
\item[(iii.)] $\ddot{\eta}_{\delta,\lambda}(r)\leq 0$,
\item[(iv.)] the $k^{\mathrm{th}}$ derivative at $\delta\frac{\pi}{2}$, $\eta_{\delta,\lambda}^{(k)}(\delta\frac{\pi}{2})=0$ for all $k\geq 1$.
\end{enumerate} 
We will assume that for any pair $\lambda,\lambda'\geq 0$, ${\eta_{\delta,\lambda}}$ and $\eta_{\delta,\lambda'}$ agree on the interval $[0,\delta\frac{\pi}{2}]$.  

The function $\eta_{\delta,\lambda}$ is known as a {\em $\delta$-torpedo function} with {\em neck length} $\lambda$ or $\delta-\lambda$-torpedo function. As it satisfies conditions (i.) and (ii.) of \ref{beta0} and has $\eta_{\delta,\lambda}( \frac{\pi}{2}+\lambda)>0$, it gives rise to a smooth metric on $D^{n}$. The resulting metric is called a {\em torpedo metric of radius $\delta$} and {\em neck length} $\lambda$ (or $\delta-\lambda$-torpedo metric). It is denoted $g_{\tor}^{n}(\delta)_{\lambda}$ and given by the formula:
$$g_{\tor}^{n}(\delta)_{\lambda}=dr^{2}+\eta_{\delta, \lambda}(r)^{2}ds_{n-1}^{2},$$ 
where $r\in [0, \delta\frac{\pi}{2}+\lambda]$.
Such a metric is rotationally symmetric metric on the disk $D^{n}$ and roughly, a round hemisphere of radius $\delta$ near the centre of the disk and a round cylinder of radius $\delta$ near its boundary. Indeed, the metric always takes this cylindrical form on the annular region where $r\in[\delta\frac{\pi}{2}, \delta\frac{\pi}{2}+\lambda]$. This region of the disk is known known as the {\em neck} of the torpedo metric and is isometric to a round cylinder of radius $\delta$ and length $\lambda$; see Fig. \ref{torpfunc}. 

\begin{figure}[!htbp]
\vspace{-1cm}
\hspace{1.5cm}
\begin{picture}(0,0)
\includegraphics{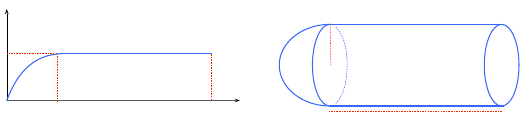}%
\end{picture}
\setlength{\unitlength}{3947sp}%
\begingroup\makeatletter\ifx\SetFigFont\undefined%
\gdef\SetFigFont#1#2#3#4#5{%
  \reset@font\fontsize{#1}{#2pt}%
  \fontfamily{#3}\fontseries{#4}\fontshape{#5}%
  \selectfont}%
\fi\endgroup%
\begin{picture}(5079,1559)(1902,-7227)
\put(1850,-7250){\makebox(0,0)[lb]{\smash{{\SetFigFont{10}{8}{\rmdefault}{\mddefault}{\updefault}{\color[rgb]{0,0,0}$0$}%
}}}}
\put(1800,-6750){\makebox(0,0)[lb]{\smash{{\SetFigFont{10}{8}{\rmdefault}{\mddefault}{\updefault}{\color[rgb]{0,0,0}$\delta$}%
}}}}
\put(4500,-6680){\makebox(0,0)[lb]{\smash{{\SetFigFont{10}{8}{\rmdefault}{\mddefault}{\updefault}{\color[rgb]{1,0,0}$\delta$}%
}}}}
\put(2260,-7250){\makebox(0,0)[lb]{\smash{{\SetFigFont{10}{8}{\rmdefault}{\mddefault}{\updefault}{\color[rgb]{0,0,0}$\delta\frac{\pi}{2}$}%
}}}}
\put(3400,-7250){\makebox(0,0)[lb]{\smash{{\SetFigFont{10}{8}{\rmdefault}{\mddefault}{\updefault}{\color[rgb]{0,0,0}$\delta\frac{\pi}{2}+\lambda$}%
}}}}
\put(5100,-7320){\makebox(0,0)[lb]{\smash{{\SetFigFont{10}{8}{\rmdefault}{\mddefault}{\updefault}{\color[rgb]{1,0,0}$\lambda$}%
}}}}
\end{picture}%
\caption{A $\delta$-torpedo function $\eta_{\delta, \lambda}$ (left) and the resulting torpedo metric $g_{\tor}^{n}(\delta)_{\lambda}$ on the disk (right)}
\label{torpfunc}
\end{figure}   
To avoid any misunderstanding, we emphasise that the torpedo metric depicted in the right image of this figure is {\em not} obtained by rotating the curve depicted in the left image which intersects the horizontal at an angle of $\frac{\pi}{4}$. Instead it is obtained by rotating a curve which intersects the horizontal at $0$ as a circular arc (and thus at an angle of $\frac{\pi}{2}$). We now make a number of elementary observations about torpedo metrics in the following proposition.

\begin{remark}
It is convenient, for certain topological arguments later on, that our definition of the torpedo metric above is slightly more general than that given in other sources such as \cite{Walsh1}. Instead of specifying only one torpedo function, we allow for each pair $(\delta, \lambda)$, $\eta_{\delta, \lambda}$ to be any function which satsfies properties (i) through (iv) above. Thus for each pair $(\delta, \lambda)$, we have a collection of torpedo functions (metrics) differing only marginally from each other; see part (i.) of Lemma \ref{torprop} below. In particular, for any $(\delta, \lambda)$, any one of the torpedo functions (metrics) in the associated collection would suffice for all purposes in \cite{Walsh1}. 
\end{remark}


\begin{proposition}\label{torprop} Let $(\delta, \lambda)\in(0,\infty)\times[0,\infty)$ be an arbitrary pair. 
\begin{enumerate}
\item[(i.)] The set of all $\delta-\lambda$-torpedo functions forms a convex subspace of the space of smooth functions: $C^{\infty}([0, \delta\frac{\pi}{2}+\lambda],[0,\infty))$. 
\item[(ii.)] Any torpedo metric $g_{\tor}^{n}(\delta)_{\lambda}$ has positive scalar curvature. 
\item[(iii.)] For any constant $B>0$, there exists a sufficiently small $\delta>0$ so that the scalar curvature of any torpedo metric $g_{\tor}^{n}(\delta)_{\lambda}$ (for any $\lambda\geq 0$) is everywhere greater than $B$.
\end{enumerate}
\end{proposition}
\begin{proof}
Part (i.) is easily verified by checking that conditions (i.) through (iv.) above are closed under linear combination.
Parts (ii.) and (iii.) follow immediately from the formula for the scalar curvature (\ref{Rcurv}) applied to $\eta_{\delta, \lambda}$.
\end{proof}

We will now consider rotationally symmetric metrics on the disk more generally. Using the previously defined notation, $\Riem(D^{n})$ denotes the space of Riemannian metrics on the disk $D^{n}$. Recall that, unlike the space $\Riem(D^{n}, \p D^{n})$, we impose no condition on the behaviour of metrics near boundary of the disk. The particular dimension $n$ will be important later. For now we assume that $n$ is fixed and $n\geq 3$.
We consider the subspace, $\Riem_{O(n)}(D^{n})$, of metrics on $D^{n}$ which are invariant under the obvious action of the orthogonal group $O(n)$. Each metric $g\in \Riem_{O(n)}(D^{n})$ is rotationally symmetric and takes the form:
$$ g=\alpha_1(r)^{2}dr^{2}+\alpha_2(r)^{2}ds_{n-1}^{2},$$
where $r\in[0,1]$ is the radial distance coordinate on $D^{n}$ and $\alpha_1, \alpha_2:[0,1]\rightarrow [0, \infty)$ are smooth functions. We now make a change of coordinates by defining: $$l(r):=\int_{0}^{r}\alpha_1(u)du.$$ The function $l(r)$ is defined on $[0,1]$ and satisfies $l'(r)>0$ for all $r\in[0,1]$. Thus, it is invertible and we denote its inverse by $r(l)$ defined on $[0, b]$, where $b=l(1)$, the radius of the disk ${D^{n}}$ under the metric $g$. In these new coordinates, the metric $g$ takes the form:
$$g=dl^{2}+\omega(l)^{2}ds_{n-1}^{2},$$ where $\omega(l)=\alpha_2(r(l))$ and $l\in[0,b]$. The function $\omega:[0,b]\rightarrow[0,\infty)$ is called the {\em warping function} for the metric.
\begin{remark}
Strictly speaking, the metrics $g=\alpha_1(r)^{2}dr^{2}+\alpha_2(r)^{2}ds_{n-1}^{2}$ and $dl^{2}+\omega(l)^{2}ds_{n-1}^{2}$ above are not equal, only isometric. The former is a metric on $D^{n}(1)$ and the latter on $D^{n}(b)$. However, the map $r\mapsto l(r)$ which identifies the radial coordinates provides a canonical isometry. Thus we feel it is reasonable to slightly abuse notation and write 
$g=dl^{2}+\omega(l)^{2}ds_{n-1}^{2}$. \end{remark}
\noindent We summarise the above discussion in the following proposition.
\begin{proposition}
For each metric $g\in \Riem_{O(n)}(D^{n})$, there is a unique warping function, $\omega_g:[0,b_g]\rightarrow[0,\infty)$, where $b_g$ is the radius of $D^{n}$ with respect to $g$.
\end{proposition}

From earlier, we know that $\omega:[0,b]\rightarrow [0,\infty)$ is a warping function for a metric in $\Riem_{O(n)}(D^{n})$ if and only if it satisfies condition (i) of \ref{beta0} and $\omega(b)>0$. 
Consider now the subspace $\Riem_{O(n)}^{+}(D^{n})\subset\Riem_{O(n)}(D^{n})$
consisting of rotationally symmetric metrics on $D^{n}$ with positive scalar curvature. Thus:
$$\Riem_{O(n)}^{+}(D^{n}):=\Riem_{O(n)}(D^{n})\cap\Riem^{+}(D^{n}).$$
The subspace of $\Riem_{O(n)}^{+}(D^{n})$ consisting of {\em torpedo metrics} on the disk $D^{n}$ is denoted $\Riem_{\cT}^{+}(D^{n})$. More precisely:
$$ \Riem_{\cT}^{+}(D^{n}):= \{g\in \Riem_{O(n)}^{+}D^{n}:\omega_g=\eta_{\delta, \lambda} \text{ for some } (\delta, \lambda)\in(0,\infty)\times [0,\infty) \}.$$
It is also useful to specify the subspaces $\Riem_{\cT(1)}^{+}(D^{n})$, consisting of torpedo metrics of radius $1$ and arbitrary neck-length and $\Riem_{\cT(1,0)}^{+}(D^{n})$ which consists of torpedo metrics with radius $1$ and neck-length zero.

\begin{proposition} \label{torpcontract}
Assuming $n\geq 3$, the space $\Riem_{\cT}^{+}(D^{n})$ is a contractible subspace of $\Riem_{O(n)}^{+}(D^{n})$. 
\end{proposition} 
\begin{proof}
First we will describe a deformation retract from the space $\Riem_{\cT}^{+}(D^{n})$ to the subspace $\Riem_{\cT(1)}^{+}(D^{n})$.
Suppose $\eta:[0,b]\rightarrow [0,\infty)$ is a torpedo function of radius $\delta$. We will not concern ourselves yet with the neck-length of this torpedo function except to observe that $b\geq\delta\frac{\pi}{2}$. For any $\kappa>0$, a straightforward calculation shows that the mapping $r\mapsto \kappa\eta(\frac{r}{\kappa})$ determines a torpedo function of radius $\kappa\delta$ defined on the domain $[0, \kappa b]$. In turn this determines a new torpedo metric of radius $\kappa\delta$. In particular, as $\eta$ has radius $\delta$, setting $\kappa=\frac{1}{\delta}$ results in a torpedo function of radius $1$. The metrics resulting from this process (which are always torpedo metrics) are precisely the result of a homothetic rescaling of other torpedo metrics. Now suppose $g\in \Riem_{\cT}^{+}(D^{n})$ is a torpedo metric with warping function $\eta_{g}:[0,b_g]\rightarrow[0,\infty)$ and radius $\delta_{g}=\eta_{g}(b_g)$. By replacing $\eta_{g}$ with the warping function given by: $$r\mapsto \left[\tau+\frac{1-\tau}{\delta_{g}}\right]\eta_{g}\left(\frac{r}{\tau+\frac{1-\tau}{\delta_{g}}}\right),$$ where $r\in[0, {b_g}(\tau+\frac{1-\tau}{\delta_{g}})]$ and $\tau\in[0,1]$, we obtain a deformation retract of the space $\Riem_{\cT}^{+}(D^{n})$ to the subspace $\Riem_{\cT(1)}^{+}(D^{n})$.

By continuously shrinking all torpedo necks to zero, we obtain a further deformation retract from $\Riem_{\cT(1)}^{+}(D^{n})$ onto the subspace $\Riem_{\cT(1,0)}^{+}(D^{n})$. Every such torpedo metric is described by a torpedo function of the form $\eta:[0,\frac{\pi}{2}]\rightarrow [0,\infty)$. An elementary calculation shows that this set of torpedo functions is closed under linear combination. Thus, any inclusion $\{\eta\} \hookrightarrow \Riem_{\cT(1,0)}(D^{n})$ forms part of a deformation retract.
\end{proof}

\noindent To simplify the notation, we will write $g_{\tor}^{n}(\delta):=g_{\tor}^{n}(\delta)_{1}$ to denote a radius $\delta$ torpedo metric with neck-length $1$. For many of our purposes (especially in the earlier sections of the paper) the neck-length of the torpedo will not matter, only the radius. Later on the neck-length will matter slightly and we will reintroduce appropriate notation. Finally, we will write simply $g_{\tor}^{n}:=g_{\tor}^{n}(1)$ to denote a torpedo metric with radius and neck-length $1$.

\subsection{Almost Torpedo Metrics}
We close by defining something of a generalisation of the torpedo metric which we will make use of in the next section. A smooth function $\omega:[0,b]\rightarrow [0,\infty)$ is called an {\em almost torpedo function}, if it satisfies the following properties.
\begin{enumerate}
\item[(i.)] $j_{0}^{\infty}(\omega)=j_{0}^{\infty}(\sin)$.
\item[(ii.)] $\dot{\omega}(r)\geq 0$ for all $r\in[0,b]$.
\item[(iii.)] $\ddot{\omega}(r)<0$ when $r$ is near but not at zero.
\item[(iv.)] The corresponding scalar curvature function, $s_{\omega}$, satisfies $s_{\omega}(r)>0$ for all $r\in[0,b]$. 
\end{enumerate}

Finally, we denote by $\Riem_{\cA\cT}^{+}(D^{n})$, the subspace of $\Riem_{O(n)}^{+}(D^{n})$ defined:
$$ \Riem_{\cA\cT}^{+}(D^{n}) := \{g\in \Riem_{O(n)}^{+}(D^{n}): \omega_{g} \text{ is an almost torpedo function} \}.$$
We call $\Riem_{\cA\cT}^{+}(D^{n})$ the space of {\em almost torpedo metrics} on the disk $D^{n}$.
In conclusion, we recall that the spaces of this section include as follows:
$$\Riem_{\cT(1,0)}^{+}(D^{n})\subset\Riem_{\cT(1)}^{+}(D^{n})\subset\Riem_{\cT}^{+}(D^{n})\subset\Riem_{\cA\cT}^{+}(D^{n})\subset \Riem_{O(n)}^{+}(D^{n}).$$

\section{Revisiting the Theorems of Gromov-Lawson and Chernysh}\label{GLreview}

In this section, we will briefly review the technique of geometric surgery on positive scalar curvature metrics pioneered by  Gromov-Lawson in \cite{GL} as well as an important theorem of Chernysh \cite{Che} which strengthens the original work. For the reader interested in more detail, there are a variety of sources. As mentioned in the introduction, recent work by Ebert and Frenck in \cite{EF} contains an extremely thorough proof of Chernysh's Theorem as well as a comprehensive recounting of the work of Gromov and Lawson in \cite{GL}. Regarding the latter, the authors draw from work done in \cite{RS} and \cite{Walsh1} which also contain detailed accounts of the original Gromov-Lawson construction. 

As stated earlier, $X$ is always a smooth compact $n$-dimensional manifold with empty boundary, while $W$ is a smooth compact $(n+1)$-dimensional manifold with non-empty closed boundary. Throughout, $\p W=X$. 

\subsection{Surgery} 
Suppose $\phi:S^{p}\times D^{q+1}\hookrightarrow X$ is an embedding, where $\dim X=n=p+q+1$.
Recall that a {\em surgery} on a smooth $n$-dimensional manifold $X$, with respect to the embedding $\phi$, is the construction of a manifold $X'$ by removing the image of $\phi$ from $X$ and using the restricted map $\phi|_{S^{p}\times S^{q}}$ to attach $D^{p+1}\times S^{q}$ along the common boundary (making appropriate smoothing adjustments). The resulting manifold $X'$, depicted in Fig. \ref{surgerydef}, is therefore defined as:
\begin{equation*}
X':=(X\setminus\phi(S^{p}\times {\oD}))\cup_{\phi}D^{p+1}\times S^{q}.
\end{equation*}
It is always possible to reverse such a surgery by performing a {\em complementary surgery.} The newly attached $D^{p+1}\times S^{q}\subset X'$ can be regarded as the image of an embedding $\phi':D^{p+1}\times S^{q}\rh X'$ whose restriction to the boundary coincides with $\phi$. Peforming a surgery with respect to $\phi'$ involves removing $D^{p+1}\times S^{q}$ and reattaching the originally removed $S^{p}\times D^{q+1}$. The resulting manifold is diffeomorphic to the original manifold $X$.

\begin{figure}[!htbp]
\vspace{5cm}
\hspace{1.5cm}
\begin{picture}(0,0)
\includegraphics{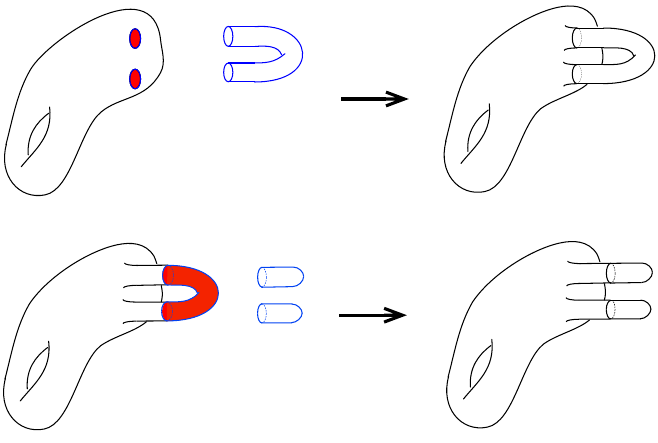}%
\end{picture}
\setlength{\unitlength}{3947sp}%
\begingroup\makeatletter\ifx\SetFigFont\undefined%
\gdef\SetFigFont#1#2#3#4#5{%
  \reset@font\fontsize{#1}{#2pt}%
  \fontfamily{#3}\fontseries{#4}\fontshape{#5}%
  \selectfont}%
\fi\endgroup%
\begin{picture}(5079,1559)(1902,-7227)
\put(1900,-4000){\makebox(0,0)[lb]{\smash{{\SetFigFont{10}{8}{\rmdefault}{\mddefault}{\updefault}{\color[rgb]{0,0,0}$X$}%
}}}}
\put(2200,-4250){\makebox(0,0)[lb]{\smash{{\SetFigFont{10}{8}{\rmdefault}{\mddefault}{\updefault}{\color[rgb]{1,0,0}$S^{p}\times D^{q+1}$}%
}}}}
\put(3450,-4600){\makebox(0,0)[lb]{\smash{{\SetFigFont{10}{8}{\rmdefault}{\mddefault}{\updefault}{\color[rgb]{0,0,1}$D^{p+1}\times S^{q}$}%
}}}}
\put(7200,-4000){\makebox(0,0)[lb]{\smash{{\SetFigFont{10}{8}{\rmdefault}{\mddefault}{\updefault}{\color[rgb]{0,0,0}$X'$}%
}}}}
\put(1900,-6000){\makebox(0,0)[lb]{\smash{{\SetFigFont{10}{8}{\rmdefault}{\mddefault}{\updefault}{\color[rgb]{0,0,0}$X'$}%
}}}}
\put(2800,-6500){\makebox(0,0)[lb]{\smash{{\SetFigFont{10}{8}{\rmdefault}{\mddefault}{\updefault}{\color[rgb]{1,0,0}$D^{p+1}\times S^{q}$}%
}}}}
\put(3800,-6500){\makebox(0,0)[lb]{\smash{{\SetFigFont{10}{8}{\rmdefault}{\mddefault}{\updefault}{\color[rgb]{0,0,1}$S^{p}\times D^{q+1}$}%
}}}}
\put(7200,-6000){\makebox(0,0)[lb]{\smash{{\SetFigFont{10}{8}{\rmdefault}{\mddefault}{\updefault}{\color[rgb]{0,0,0}$X$}%
}}}}
\end{picture}%
\caption{Performing a surgery on an embedded $S^{p}\times D^{q+1}$ in $X$  to obtain $X'$ (top) and performing a {\em reverse} surgery on an embedded $D^{p+1}\times S^{q}$ in $X'$ to restore the smooth topology of $X$ (bottom)}
\label{surgerydef}
\end{figure}   

The {\em trace of the surgery on $\phi$} is the manifold $T_{\phi}$ obtained by gluing the cylinder $X\times [0,1]$ to the disk product $D^{p+1}\times D^{q+1}$ via the embedding $\phi$. This is done by attaching $X\times\{1\}$ to the boundary component $S^{p}\times D^{q+1}$ via $\phi:S^{p}\times D^{q+1}\hookrightarrow X\hookrightarrow X\times\{1\}$. After appropriate smoothing we obtain $T_{\phi}$, a smooth manifold with boundary diffeomorphic to the disjoint union $X\sqcup X'$, i.e. an (elementary) cobordism of $X$ and $X'$. As suggested in the introduction, we will be particularly interested in the following case. Suppose $X$ forms the boundary of a smooth $(n+1)$-dimensional manifold $W$. Thus $\p W=X$. Let $\phi$ and $T_{\phi}$ be as above. Then, we can form a smooth manifold $W'$ by gluing $W$ to $T_{\phi}$ by an appropriate identification of $\partial W$ with $X\times \{0\}\subset T_{\phi}$; see Fig. \ref{surgerytracedef}. There are, as mentioned, various ``smoothing" issues involved in such a construction which we will not go into here; see Ch. 8, Sec. 2 of \cite{Hirsch} for details.
\begin{figure}[!htbp]
\vspace{-0.5cm}
\hspace{1.5cm}
\begin{picture}(0,0)
\includegraphics{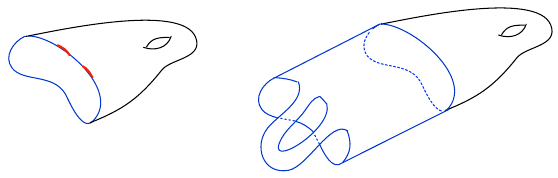}%
\end{picture}
\setlength{\unitlength}{3947sp}%
\begingroup\makeatletter\ifx\SetFigFont\undefined%
\gdef\SetFigFont#1#2#3#4#5{%
  \reset@font\fontsize{#1}{#2pt}%
  \fontfamily{#3}\fontseries{#4}\fontshape{#5}%
  \selectfont}%
\fi\endgroup%
\begin{picture}(5079,1559)(1902,-7227)
\put(2800,-7000){\makebox(0,0)[lb]{\smash{{\SetFigFont{10}{8}{\rmdefault}{\mddefault}{\updefault}{\color[rgb]{0,0,0}$W$}%
}}}}
\put(4800,-6600){\makebox(0,0)[lb]{\smash{{\SetFigFont{10}{8}{\rmdefault}{\mddefault}{\updefault}{\color[rgb]{0,0,1}$T_{\phi}$}%
}}}}
\put(5800,-7000){\makebox(0,0)[lb]{\smash{{\SetFigFont{10}{8}{\rmdefault}{\mddefault}{\updefault}{\color[rgb]{0,0,0}$W'=W\cup T_{\phi}$}%
}}}}
\end{picture}%
\caption{The manifold with boundary, $W'$, obtained by attaching to $W$ the trace of the surgery on $\phi$, $T_{\phi}$}
\label{surgerytracedef}
\end{figure}   

\subsection{The Gromov-Lawson Construction}\label{GLconstruction}
We begin with an embedding $\phi:S^{p}\times D^{q+1}\rh X^{n}$ satisfying $p+q+1=n$ and $q\geq 2$. 
The Gromov-Lawson construction allows for the construction of a new psc-metric $g'$ on the manifold $X'$ obtained from $X$ by surgery on the embedding $\phi$. Before describing it further, it will make our work a little neater if we introduce the following family of rescaling maps: 
\begin{equation*}
\begin{split}
\sigma_\rho:S^{p}\times D^{q+1}&\longrightarrow S^{p}\times D^{q+1}\\
(x,y)&\longmapsto (x, \rho y),
\end{split}
\end{equation*}
where $\rho\in(0,1]$.
We set $\phi_{\rho}:=\phi\circ\sigma_\rho$ and $N_{\rho}:=\phi_{\rho}(S^{p}\times D^{q+1})$, abbreviating $N:=N_{1}$. Thus, for any meric $g$ on $X$ and any $\rho\in(0,1]$, $\phi_{\rho}^{*}g$ is just the metric obtained by taking the restriction metric $g|_{N_{\rho}}$, pulling it back via $\phi$ to obtain the metric $\phi^{*}g|_{N_{\rho}}$ on $S^{p}\times D^{q+1}(\rho)$ and finally, via the obvious rescaling map $\sigma_\rho$, pulling it back to obtain a metric on $S^{p}\times D^{q+1}$. The benefit of this is that we always consider pull-backs of metrics which are restricted on neighbourhoods $N_{\rho}=\phi(S^{p}\times D^{q+1}(\rho))\subset X$ as metrics on the same space $S^{p}\times D^{q+1}$.

At the heart of the Gromov-Lawson surgery construction is the following fact.
\begin{theorem}[Gromov-Lawson \cite{GL}]\label{GLthm}
Let $X^{n}$ be a smooth manifold and $\phi:S^{p}\times D^{q+1}\rh X$ an embedding with $p+q+1=n$ and $q\geq 2$. Then for any psc-metric $g\in\Riem^{+}(X)$, there is a psc-metric $g_{\std}\in\Riem^{+}(X)$ so that:
\begin{enumerate}
\item[(i.)] In the neighbourhood $N_{\frac{1}{2}}=\phi_{\frac{1}{2}}(S^{p}\times D^{q+1})$,  $g_{\std}$ pulls back to the metric:
$$\phi_{\frac{1}{2}}^{*}g_{\std}=ds_{p}^{2}+g_{\tor}^{q+1}.$$ 
\item[(ii.)] Outside $N=\phi(S^{p}\times D^{q+1})$, $g_{\std}=g$.
\end{enumerate}
\end{theorem}
\noindent The metric $g_{\std}$ is thus prepared for surgery (or standardised on $N_{\frac{1}{2}}$). By removing part of the standard piece taking the form $(S^{p}\times D^{q+1}, ds_{p}^{2}+g_{\tor}^{q+1})$ and replacing it with $(D^{p+1}\times S^{q}, g_{\tor}^{p+1}+ds_{q}^{2}$), we obtain a psc-metric $g'\in\Riem^{+}(X')$; see Fig. \ref{GLsurgery} below.
\begin{figure}[htb!]
\hspace{-1cm}
\begin{picture}(0,0)%
\includegraphics{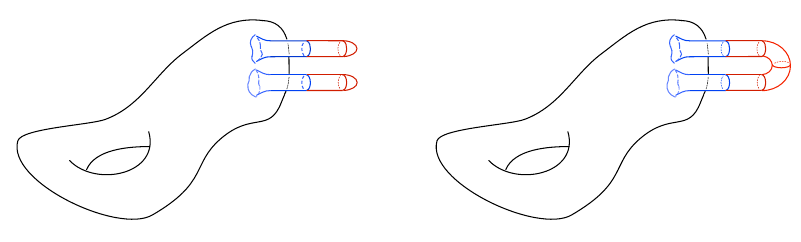}%
\end{picture}%
\setlength{\unitlength}{3947sp}%
\begingroup\makeatletter\ifx\SetFigFont\undefined%
\gdef\SetFigFont#1#2#3#4#5{%
  \reset@font\fontsize{#1}{#2pt}%
  \fontfamily{#3}\fontseries{#4}\fontshape{#5}%
  \selectfont}%
\fi\endgroup%
\begin{picture}(6000,2000)(0, 0)
\put(1500,200){\makebox(0,0)[lb]{\smash{{\SetFigFont{10}{8}{\rmdefault}{\mddefault}{\updefault}{\color[rgb]{0,0,0}$(X,g_{\std})$}%
}}}}
\put(2300,900){\makebox(0,0)[lb]{\smash{{\SetFigFont{10}{8}{\rmdefault}{\mddefault}{\updefault}{\color[rgb]{1,0,0}$ds_{p}^{2}+g_{\tor}^{q+1}$}%
}}}}

\put(4800,200){\makebox(0,0)[lb]{\smash{{\SetFigFont{10}{8}{\rmdefault}{\mddefault}{\updefault}{\color[rgb]{0,0,0}$(X',g')$}%
}}}}
\put(5600,900){\makebox(0,0)[lb]{\smash{{\SetFigFont{10}{8}{\rmdefault}{\mddefault}{\updefault}{\color[rgb]{1,0,0}$g_{\tor}^{p+1}+ds_{q}^{2}$}%
}}}}

\end{picture}%
\caption{The standardised psc-metric $g_{\std}$ (left) and the new psc-metric $g'$ on $X'$ (right)}
\label{GLsurgery}
\end{figure} 
We provide here a very brief summary of the main steps in constructing $g_{\std}$. As mentioned before, detailed accounts of this construction are contained in \cite{RS}, \cite{Walsh1} and \cite{EF} as well as the original paper \cite{GL}. 

\begin{enumerate}
\item{} Working entirely inside $N=\phi(S^{p}\times D^{q+1})$, we make a series of adjustments to the metric $g$. The first adjustment is to replace $g$ with a psc-metric which, for some $\bar{r}\in(0,1]$, satisfies the condition that the smooth curve $[0,\bar{r}]\rh X, t\rightarrow \phi(x,ty)$ is a unit speed geodesic for each $x,y\in S^{p}\times D^{q+1}$. In \cite{EF}, Ebert and Frenck define such a metric as {\em normalised} with respect to $\phi|_{S^{p}\times D^{q+1}(\bar{r})}$. That this is possible follows from standard results of Differential Topology, concerning the uniqueness of tubular neighbourhoods up to isotopy; see chapter 4, section 5 of \cite{Hirsch}. 
It is worth pointing out that in many accounts of the Gromov-Lawson construction (such as that in \cite{Walsh1}), the embedding $\phi$ arises from an embedding of $S^{p}$ with trivial normal bundle and via the exponential map with respect to the metric $g$. In this case, the metric is alreadly normalised with respect to the embedding and so this initial step is unnecessary.

\item{} The next stage is to construct a hypersurface $M\subset N\times[0,\infty)$ where $N\times [0,\infty)$ is equipped with the metric $g+dt^{2}$.
Letting $r$ denote the radial distance coordinate from $S^{p}\times\{0\}$ in $N$, $M$ is obtained by pushing out geodesic sphere bundles of radius $r$ along the $t$-axis with respect to a unit speed smooth curve $\gamma:[0,b]\rightarrow[0,\infty)\times[0,\infty)$ in the $(t-r)$-plane of the type shown in Fig. \ref{GLcurve}; see section 2.4 of \cite{Walsh1} for a detailed description. Such a curve, called a {\em Gromov-Lawson curve}, is assumed to begin at the point $\gamma(0)=(\bar{t}, 0)$, for some possibly large $\bar{t}>0$, as a piece of circular arc (thus intersecting the horizontal $t$-axis at an angle of $\frac{\pi}{2}$). It proceeds roughly as shown in the figure, with two distinct upward bends, before finishing as a vertical line segment defined $\gamma(l)=(0, \bar{r}+l-b)$ when $l$ is near $b$. This latter condition means that this metric smoothly transitions to $g$ near the $\p N$. It is not difficult to construct an obvious diffeomorphism to pull back the metric to $N$ and via the smooth transition to a metric on $X$. A lengthy computation then shows that the curve $\gamma$ can be chosen to ensure that the resulting metric on $X$ has positive scalar curvature. 

\begin{figure}[!htbp]
\vspace{1cm}
\hspace{-0cm}
\begin{picture}(0,0)
\includegraphics{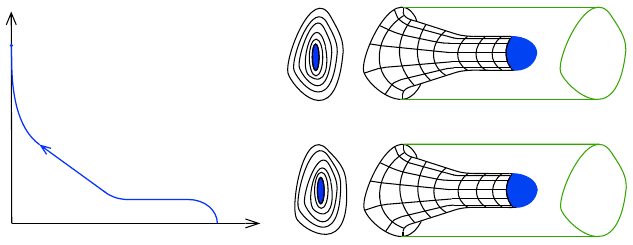}%
\end{picture}
\setlength{\unitlength}{3947sp}%
\begingroup\makeatletter\ifx\SetFigFont\undefined%
\gdef\SetFigFont#1#2#3#4#5{%
  \reset@font\fontsize{#1}{#2pt}%
  \fontfamily{#3}\fontseries{#4}\fontshape{#5}%
  \selectfont}%
\fi\endgroup%
\begin{picture}(5079,1559)(1902,-7227)
\put(1750,-5500){\makebox(0,0)[lb]{\smash{{\SetFigFont{10}{8}{\rmdefault}{\mddefault}{\updefault}{\color[rgb]{0,0,0}${r}$}%
}}}}
\put(3800,-7250){\makebox(0,0)[lb]{\smash{{\SetFigFont{10}{8}{\rmdefault}{\mddefault}{\updefault}{\color[rgb]{0,0,0}$t$}%
}}}}
\put(2750,-7200){\makebox(0,0)[lb]{\smash{{\SetFigFont{10}{8}{\rmdefault}{\mddefault}{\updefault}{\color[rgb]{0,0,1}$\gamma(0)=(\bar{t},0)$}
}}}}
\put(2000,-5700){\makebox(0,0)[lb]{\smash{{\SetFigFont{10}{8}{\rmdefault}{\mddefault}{\updefault}{\color[rgb]{0,0,1}$\gamma(b)=(0, \bar{r})$}%
}}}}
\end{picture}%
\caption{The curve $\gamma$ (left), geodesic spheres on the fibres of the neighbourhood $S^{p}\times D^{q+1}\cong N$ (middle) and the hypersurface $M$ obtained by pushing out the geodesic spheres with respect to $\gamma$ (right)}
\label{GLcurve}
\end{figure}

\item{} The metric thus far constructed is likely not a product metric near $S^{p}\times \{0\}$. However it is one which is increasingly ``torpedo-like" on smaller and smaller disk fibres. It is possible, via certain isotopy arguments on geodesic sphere bundles, to replace this metric first with one which is a Riemannian submersion metric with base $ds_{p}^{2}$ and fibre $g_{\tor}^{q+1}(\delta)$ for some sufficiently small $\delta>0$. Then, using the formulae of O'Neill (and possibly choosing a smaller $\delta$), we can adjust this submersion metric to obtain the product metric $ds_{p}^{2}+g_{\tor}^{q+1}(\delta)$ near $S^{p}\times \{0\}$. 

\item{} This construction works precisely when $q\geq 2$ because the geodesic fibre spheres have dimension at least two and thus carry some scalar curvature. As these geodesic spheres are small, they are close to being round and so their  contribution to the total scalar curvature is positive and large. Indeed, by careful rescaling, their contribution can always be made to compensate for the various adjustments we make. Hence, the quantity $\delta>0$ may need to be small. Once this standard form is realised however, a final isotopy, very gradually increasing the radius $\delta$ of the torpedo neck on the $D^{q+1}$ factor, gives us the desired product $ds_{p}^{2}+g_{\tor}^{q+1}$ near to $S^{p}\times \{0\}$. Thus, we get that for some $r_{\std}\in(0,\frac{1}{2})$, we have that 
$\phi_{r_{\std}}^{*}g=ds_{p}^{2}+g_{\tor}^{q+1}$. 

\item{} Although not strictly necessary, we perform a rescaling isotopy (making adjustments in the radial directions only) to drag the standard region over the rest of $N_{\frac{1}{2}}$. Essentially we specify a continuous family of smooth increasing functions: 
$$\upsilon_{\tau}:[0,1]\rightarrow [0, 1],$$
as depicted in Fig. \ref{diffslide} and which satisfy the following conditions.
\begin{enumerate}
\item[(i.)] $\upsilon_{0}(r)=r$ for all $r\in[0,1]$.
\item[(ii.)] For all $\tau\in I$, $\upsilon_{\tau}(r)=r$ when $r$ is near $0$ and when $r\in[\frac{3}{4}, 1]$
\item[(iii.)] For all $\tau\in I$, the map $\upsilon_{\tau}$ restricts as a diffeomorphism $[0,\frac{1}{2}]\rightarrow [0, \frac{1}{2}+\tau (r_{\std}-\frac{1}{2})]$.
\end{enumerate}
This gives rise to a family of self-diffeomorphisms, $\bar{\upsilon}_{\tau}$, on $N\cong S^{p}\times D^{q+1}$ defined (under the usual radial coordinates $(x, (r, \theta))\in S^{p}\times D^{q+1}$) by the formula:
$$\bar{\upsilon}_{\tau}(x, (r, \theta)):= (x,(\upsilon_{\tau}(r), \theta)).$$ Extending each map as the identity off $N$ leads to a family of self-diffeomorphisms on $X$:
$$\bar{\upsilon}_{\tau}:X\rightarrow X,$$
where $\tau\in I$.  
Finally, the isotopy through pull-back metrics $\bar{\upsilon}_{\tau}^{*}g$ moves the metric $g=\bar{\upsilon}_{0}^{*}g$ to the desired metric $g_{\std}=\bar{\upsilon}_{1}^{*}g$ with standard region on the neighbourhood $N_{\frac{1}{2}}$.

\begin{figure}[!htbp]
\vspace{2.5cm}
\hspace{4cm}
\begin{picture}(0,0)
\includegraphics{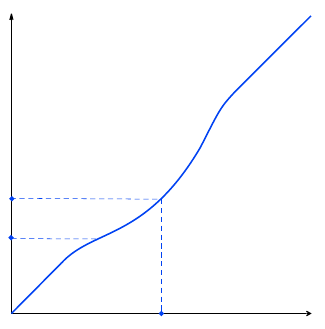}%
\end{picture}
\setlength{\unitlength}{3947sp}
\begingroup\makeatletter\ifx\SetFigFont\undefined%
\gdef\SetFigFont#1#2#3#4#5{%
  \reset@font\fontsize{#1}{#2pt}%
  \fontfamily{#3}\fontseries{#4}\fontshape{#5}%
  \selectfont}%
\fi\endgroup%
\begin{picture}(5079,1559)(1902,-7227)
\put(1600,-6500){\makebox(0,0)[lb]{\smash{{\SetFigFont{10}{8}{\rmdefault}{\mddefault}{\updefault}{\color[rgb]{0,0,0}$r_{\std}$}%
}}}}
\put(800,-6200){\makebox(0,0)[lb]{\smash{{\SetFigFont{10}{8}{\rmdefault}{\mddefault}{\updefault}{\color[rgb]{0,0,0}$\frac{1}{2}+\tau (r_{\std}-\frac{1}{2})$}%
}}}}
\put(1850,-7250){\makebox(0,0)[lb]{\smash{{\SetFigFont{10}{8}{\rmdefault}{\mddefault}{\updefault}{\color[rgb]{0,0,0}$0$}%
}}}}
\put(4230,-7250){\makebox(0,0)[lb]{\smash{{\SetFigFont{10}{8}{\rmdefault}{\mddefault}{\updefault}{\color[rgb]{0,0,0}$1$}%
}}}}
\put(3040,-7300){\makebox(0,0)[lb]{\smash{{\SetFigFont{10}{8}{\rmdefault}{\mddefault}{\updefault}{\color[rgb]{0,0,0}$\frac{1}{2}$}%
}}}}
\end{picture}%
\caption{The rescaling function $\upsilon_{\tau}$}
\label{diffslide}
\end{figure}

\item{} Although not part of the original construction, it is important to mention that not only the last step but actually the entire construction can be performed so as to provide an explicit isotopy from the starting metric $g$ to the standardised metric $g_{\std}$; see \cite{Gajer}, \cite{Che}, \cite{Walsh1} and \cite{EF} for versions of this. We will briefly discuss one aspect. This concerns step 2 above and involves constructing a homotopy of the curve $\gamma$ back to the curve, $l\mapsto (0,l)$, which runs along the vertical axis; see Fig. \ref{GLcurveadj}. Importantly, this must be done so that at each stage in the homotopy, the resulting hypersurface metric still has positive scalar curvature. This is not difficult to achieve and full details can be found in \cite{Walsh1}. Essentially, it involves specifying a path-connected space of {\em admissible curves}, each of which leads to a positive scalar curvature hypersurface metric, containing the vertical axis curve as well as all Gromov-Lawson curves described above. Appropriate paths in this space back to the vertical axis, give the isotopies we desire. The following properties of an admissible curve, $\gamma:[0,b]\rightarrow[0,\infty)\times[0,\infty)$ with $\gamma(l)=(\gamma_{t}(l), \gamma_{r}(l))$, are worth mentioning.
\begin{enumerate}
\item[(i.)] The curve $\gamma$ is unit speed.
\item[(ii.)] Although $\gamma$ does not necessarily begin as a piece of circular arc, at $\gamma(0)$ it does meet the horizontal axis at an angle of $\frac{\pi}{2}$. 
\item[(iii.)] When $l$ is near $b$, $\gamma(l)=(0,\bar{r}-b+l)$.
\item[(iv.)] The smooth function $\gamma_{r}:[0,b]\rightarrow[0,\bar{r}]$ always satisfies:   
\begin{enumerate}
\item[a.] $\gamma_{r}(0)=0$, $\dot\gamma(0)=1$ and even order derivatives of $\gamma_r$ satisfy $\gamma_{r}^{(even)}(0)=0$.
\item[b.] $\dot{\gamma_r}(l)\geq 0$ for all $l\in[0,b]$.
\item[c.] $\ddot{\gamma_r}(l)\leq 0$ when $l$ is near $0$.
\end{enumerate}
\end{enumerate}
\end{enumerate}
\begin{figure}[!htbp]
\vspace{1cm}
\hspace{5cm}
\begin{picture}(0,0)
\includegraphics{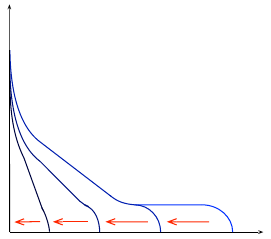}%
\end{picture}
\setlength{\unitlength}{3947sp}%
\begingroup\makeatletter\ifx\SetFigFont\undefined%
\gdef\SetFigFont#1#2#3#4#5{%
  \reset@font\fontsize{#1}{#2pt}%
  \fontfamily{#3}\fontseries{#4}\fontshape{#5}%
  \selectfont}%
\fi\endgroup%
\begin{picture}(5079,1559)(1902,-7227)
\put(1750,-5500){\makebox(0,0)[lb]{\smash{{\SetFigFont{10}{8}{\rmdefault}{\mddefault}{\updefault}{\color[rgb]{0,0,0}${r}$}%
}}}}
\put(3800,-7350){\makebox(0,0)[lb]{\smash{{\SetFigFont{10}{8}{\rmdefault}{\mddefault}{\updefault}{\color[rgb]{0,0,0}$t$}%
}}}}
\end{picture}%
\caption{A path through admissible curves which induces an isotopy of psc-hypersurface metrics back to the original one}
\label{GLcurveadj}
\end{figure}   

An isotopy form of the Gromov-Lawson construction was first shown by Gajer in \cite{Gajer} although proofs can also be found in \cite{Che}, \cite{Walsh1} and \cite{EF}, where this isotopy form of the construction is applied to compact families of psc-metrics, a crucial ingredient in the proof of Chernysh's theorem. One issue that arises when applying the Gromov-Lawson construction to a compact family of psc-metrics (not adequately addressed in \cite{Walsh1}) is the choice of neighbourhood in which to work. Suppose instead of applying the construction to a lone metric $g$, we try to apply it to a compact family of psc-metrics $K\rightarrow \Riem^{+}(X)$. In Proposition 3.4 of \cite{EF}, Ebert and Frenck show that for some sufficienly small $\bar{r}>0$,  this family can be adjusted by isotopy to one which is normalised with respect to $\phi|_{S^{p}\times D^{q+1}(\bar{r})}$. Thus, for any $(x,y)\in S^{p}\times D^{q+1}$, the curve $[0, \bar{r}]\rh X, t\mapsto \phi(x,ty)$ is a unit speed geodesic curve in $X$ for {\bf each} metric in the family. The Gromov-Lawson construction then goes through for the entire family giving rise to the following ``Family Surgery Theorem".

\begin{theorem}\label{GLcompact}\cite{Walsh1}\cite{Che}\cite{EF}
Let $X^{n}$ be a smooth manifold and $\phi:S^{p}\times D^{q+1}\hookrightarrow X^{n}$ an embedding with $p+q+1=n$ and $q\geq 2$. Let $K\rightarrow \Riem^{+}(X), k\mapsto g(k)$ be a compact family of psc-metrics. Then there is a  compact family $K\rightarrow \Riem^{+}(X), k\mapsto g_{\std}(k)$ so that:
\begin{enumerate}
\item[(i.)] For each $k\in K$, the metric $g_{std}(k)$ satisfies the condition that on the neighbourhood $N_{\frac{1}{2}}=\phi(S^{p}\times D^{q+1}(\frac{1}{2}))$, $\phi_{\frac{1}{2}}^{*}g_{\std}(k)=ds_{p}^{2}+g_{\tor}^{q+1}$, while outside of the neighbourhood $N=\phi(S^{p}\times D^{q+1})$, $g_{\std}(k)=g(k)$.
\item[(ii.)] The maps $K\rightarrow \Riem^{+}(X)$, given by $k\mapsto g(k)$ and $k\mapsto g_{\std}(k)$ are homotopy equivalent.
\end{enumerate} 
\end{theorem}

There are two important strengthenings of the Gromov-Lawson construction. One of these is the theorem of Chernysh which we will shortly discuss. Before this, it is worth mentioning a different kind of strengthening, concerning the trace of a surgery. As before, $\phi :S^{p}\times D^{q+1}\hookrightarrow X^{n}$ is an embedding with $p+q+1=n$ and $q\geq 2$ and $X'$ denotes the manifold obtained from $X$ by surgery on $\phi$. We denote by $T_{\phi}$, the trace of the surgery on $X$. Thus, $T_{\phi}$ is an $(n+1)$-dimensional manifold with boundary $X\sqcup X'$. We now fix disjoint collars $c:X\times [0,2)\rh T_{\phi}$ and $c':X'\times [0,2)\rh T_{\phi}$ around the boundary components $X$ and $X'$ respectively. As in the introduction, we denote by $\Riem^{+}(T_{\phi})$, the space of psc-metrics on $T_{\phi}$ which satisfy the condition that each $\bar{g}\in \Riem^{+}(T_{\phi})$ pulls back as:
$$c^{*}\bar{g}=g+dt^{2}  \quad \text{ and } \quad (c')^{*}\bar{g}=g'+dt^{2},$$
when restricted on $X\times I$ and $X'\times I$ respectively for some psc-metrics $g\in \Riem^{+}(X)$ and $g'\in \Riem^{+}(X')$. 
It is a fact that given a psc-metric $g\in\Riem^{+}(X)$, it is possible to construct a psc-metric $\bar{g}\in \Riem^{+}(T_{\phi})$ so that $\bar{g}|_{X'}=g'$ is a psc-metric obtained from $g$ by way of the Gromov-Lawson construction.
This fact was originally proved by Gajer in \cite{Gajer} and later by this author in \cite{Walsh1} where a mistake in the version from \cite{Gajer} is corrected. Another version of this theorem is proved by Carr in \cite{Carr}. The construction easily goes through for compact families of psc-metrics and we state it here in the form of a theorem. 

\begin{theorem} \cite{Gajer} \cite{Carr} \cite{Walsh1} Let $X, X', \phi, T_{\phi}, c, c'$ and $p,q$ be as above. Let $K\rightarrow \Riem^{+}(X), k\mapsto g(k)$ be a compact family of psc-metrics. There is a  compact family of psc-metrics $K\rightarrow \Riem^{+}(T_{\phi}), k\mapsto \bar{g}(k)$ so that for each $k\in K$, $\bar{g}(k)|_{X}=g(k)$.
\end{theorem}

\noindent It will be necessary to discuss this result in a little more detail later on. For now, we turn our attention to the other important strengthening of the Gromov-Lawson construction, due to Chernysh.

\subsection{Chernysh's Theorem}
We return to the smooth manifold $X^{n}$ and embedding $\phi:S^{p}\times D^{q+1}\hookrightarrow X^{n}$ with $p+q+1=n$. Typically we assume that $q\geq 2$. In this case we make the stronger assumption that both $p,q\geq 2$. Let $X'$ denote the manifold obtained from $X$ by surgery with respect to $\phi$. As we discussed earlier, we can equivalently regard $X$ as arising from $X'$ by surgery on an embedding $\phi': D^{p+1}\times S^{q}\rh X'$. Here $\phi_{\rho}'$, $N_{\rho}'$ play the analogous roles to $\phi_{\rho}$ and $N_{\rho}$, for $\rho\in(0,1]$. In any case, $X$ and $X'$ are mutually obtainable from each other by surgeries in codimension at least three. Under these hypotheses, Chernysh's theorem (Theorem \ref{Chernysh}) gives that the spaces $\Riem^{+}(X)$ and $\Riem^{+}(X')$ are homotopy equivalent. As we will make significant use of some of the underlying techniques, it is worth giving a brief outline of the proof. 
During this outline, we will define some spaces, adapted from some of the spaces involved in the proof of Chernysh's Theorem, which we will make use of later. As mentioned in the introduction, a highly detailed account of this proof (based on Chernysh's original work) is given by Ebert and Frenck in \cite{EF}.

The main idea is as follows. Instead of working with $\Riem^{+}(X)$ and $\Riem^{+}(X')$, we consider certain subspaces $\Riem_{\std}^{+}(X)\subset \Riem^{+}(X)$ and $\Riem_{\std}^{+}(X')\subset \Riem^{+}(X')$. Roughly speaking, these are spaces in which metrics take a standard product form, $ds_{p}^{2}+g_{\tor}^{q+1}$, near the embedded spheres $S^{p}:=\phi(S^{p}\times D^{q+1}(0))\subset N\subset X$ and $S^{q}:=\phi'(D^{p+1}(0)\times S^{q})\subset N'\subset X'$. From this standard structure it will be immediately clear that these spaces are in fact homeomorphic. Thus, it will be enough to show that $\Riem_{\std}^{+}(X)$ and $\Riem^{+}(X)$ are themselves homotopy equivalent spaces. An identical argument will take care of the analogous spaces for $X'$, proving the theorem.
From work of Palais in \cite{Palais}, we know that the spaces $\Riem_{\std}^{+}(X)$ and $\Riem^{+}(X)$ are dominated by CW-complexes. Thus, by a famous theorem of Whitehead (Theorem 4.5 of  \cite{Hatcher}), it is in fact enough to show that the inclusion $\Riem_{\std}^{+}(X)\subset \Riem^{+}(X)$ is a weak homotopy equivalence. Proving this result is effectively what the theorem is about.
\begin{remark}\label{EFboundary}
In \cite{Che}, Chernysh considers the inclusion $\Riem_{\std}^{+}(X)\subset \Riem^{+}(X)$ in a more general setting than we do here. The embedding $S^{p}\times D^{q+1}\hookrightarrow X$, is replaced by an embedding $Y^{p}\times D^{q+1}\hookrightarrow X^{n}$, for some arbitrary compact manifold $Y$. Obviously, $p,q$ are assumed to satisfy $p+q+1=n$, with $p,q\geq 2$. Letting $g_Y$ be some arbitrary Riemannian metric on $Y$, the space of standard psc-metrics on $Y$ to is defined to consist of psc-metrics on $X$ which take the form of a product $g_Y+ g_{\tor}^{q+1}$ near $Y$. The proof in the more general case is the same. In \cite{EF}, Ebert and Frenck go even further, assuming that $X$ and $Y$ may be manifolds with boundary and with $Y$ satisfying a compatibility condition with respect to the boundary $\p X$. We will discuss this non-empty boundary case in more detail at the end of the section. 
\end{remark}

In order to define the subspaces of standard psc-metrics $\Riem_{\std}^{+}(X)$ and $\Riem_{\std}^{+}(X')$, there are a couple of slightly larger subspaces we need to define. 
Recall from the previous section: the space $\Riem_{O(q+1)}^{+}(D^{q+1})$ of rotationally symmetric psc-metrics on the disk and the subspaces $\Riem_{\cT}^{+}(D^{q+1})$ and $\Riem_{\cA\cT}^{+}(D^{q+1})$ of torpedo and almost torpedo metrics. These spaces include as:
$$\Riem_{\cT}^{+}(D^{q+1})\subset \Riem_{\cA\cT}^{+}(D^{q+1})\subset \Riem_{O(q+1)}^{+}(D^{q+1}).$$
We will make considerable use of these spaces throughout the remainder of this section.
We begin by defining the space $\Riem_{\cT}^{+}(X)$ as:
$$\Riem_{\cT}^{+}(X):=\{g\in \Riem^{+}(X): \phi_{\frac{1}{2}}^{*}g=ds_{p}^{2}+h_{q+1} \text{ on } S^{p}\times D^{q+1} \text{ where }h_{q+1}\in\Riem_{\cT}^{+}(D^{q+1})\}.$$
Thus, $\Riem_{\cT}^{+}(X)$ is the space of psc-metrics on $X$ which restrict on $N_{\frac{1}{2}}$ as a product of a unit round sphere with an arbitrary torpedo metric.
Next, we define $\Riem_{\cT(1)}^{+}(X)$ to be the subspace of $\Riem_{\cT}^{+}(X)$ where the torpedo metric on the disk factor has fixed radius $1$. Thus, we have:
$$\Riem_{\cT(1)}^{+}(X):=\{g\in \Riem^{+}(X): \phi_{\frac{1}{2}}^{*}g=ds_{p}^{2}+h_{q+1} \text{ on } S^{p}\times D^{q+1}\text{ where } h_{q+1}\in \Riem_{\cT(1)}(D^{q+1})  \}.$$
The spaces $\Riem_{\cT}^{+}(X')$ and $\Riem_{\cT(1)}^{+}(X')$ are analogously defined.
Note that we place no conditions on the neck-length of any of these torpedo metrics. The following proposition is hardly surprising and so we provide only the idea behind the proof. 
\begin{proposition}\label{torpone}
The inclusion: $$\Riem_{\cT(1)}^{+}(X)\subset \Riem_{\cT}^{+}(X)$$ is a weak homotopy equivalence.
\end{proposition}
\noindent The basic idea, in the spirit of Lemma \ref{isotopyimpliesconc}, is to take an arbitrary compact family of psc-metrics and gradually stretch out the torpedo parts slowly adjusting the radius to the desired size. Provided this is done slowly enough, positive scalar curvature can be maintained. Finally, torpedos which already have radius $1$ will simply have their necks stretched leaving the radius unaltered.
 
Thus, $\Riem_{\cT(1)}^{+}(X)$ is a space of metrics on $X$ which pull back to a certain (small) subspace of product metrics in $\Riem^{+}(S^{p}\times D^{q+1})$. However, by  the method used in Lemma \ref{torpcontract} (continuously retracting the space of radius $1$ torpedo metrics down to a single preferred radius $1$ torpedo metric), it is easy to see that this collection of torpedo metrics is a contractible subspace in $\Riem^{+}(S^{p}\times D^{q+1})$. Moreover, one can continuously retract this space down to a lone torpedo metric of radius $1$, without damaging the $C^{\infty}$-data along the boundary. Thus, up to homotopy, $\Riem_{\cT(1)}^{+}(X)$ (and in the analogous case $\Riem_{\cT(1)}^{+}(X')$) can be regarded as a space of psc-metrics on $X$ all of which which pull back via $\phi_{\frac{1}{2}}$ to a specific standard product metric. With this in mind, we fix radius $1$ torpedo metrics $g_{\tor}^{q+1}(1)_{\lambda}\in \Riem_{\cT(1)}(D^{q+1})$ and $g_{\tor}^{p+1}(1)_{\lambda}\in \Riem_{\cT(1)}(D^{p+1})$ of various neck-lengths $\lambda>0$, then define the spaces:
$$\Riem_{\std(\lambda)}^{+}(X):=\{g\in \Riem^{+}(X): \phi_{\frac{1}{2}}^{*}g=ds_{p}^{2}+g_{\tor}^{q+1}(1)_{\lambda} \text{ on } S^{p}\times D^{q+1}\}$$
and
$$\Riem_{\std(\lambda)}^{+}(X'):=\{g'\in \Riem^{+}(X'): (\phi_{\frac{1}{2}}')^{*}g'=g_{\tor}^{p+1}(1)_{\lambda}+ds_{q}^{2} \text{ on } D^{p+1}\times S^{q} \}.$$
For the most part, the neck-length of the torpedo factor on the standard part of the metric will not be important. We will pay slight attention to it in later sections however, where it will be useful to allow it to vary. With that in mind we define the following spaces which we will reintroduce later on:
$$\Riem_{\std(-)}^{+}(X):=\bigcup_{\lambda>0}\Riem_{\std(\lambda)}^{+}(X) \text{ and } \Riem_{\std(-)}^{+}(X):=\bigcup_{\lambda>0}\Riem_{\std(\lambda)}^{+}(X).$$
The following proposition is obvious.
\begin{proposition}
For any $\lambda>0$, the inclusion: $$\Riem_{\std(\lambda)}^{+}(X)\subset \Riem_{\std(-)}^{+}(X)$$ is a homotopy equivalence.
\end{proposition}

For now we will consider the case when $\lambda=1$. Thus, we define:
$$ \Riem_{\std}^{+}(X):=\Riem_{\std(1)}^{+}(X)\text{ and } \Riem_{\std}^{+}(X'):=\Riem_{\std(1)}^{+}(X').$$
Based on the discussion above and Proposition \ref{torpone} we have the following.
\begin{proposition}\label{stdinclude}
The inclusions: $$\Riem_{\std}^{+}(X)\subset \Riem_{\cT(1)}^{+}(X)\subset \Riem_{\cT}^{+}(X)$$ are weak homotopy equivalences.
\end{proposition}
\noindent The following proposition is now immediate.
\begin{proposition}\label{easyhomeoclosed}
The map defined by removing the standard piece $(N_{\frac{1}{2}}\cong S^{p}\times D^{q+1}, ds_{p}^{2}+g_{\tor}^{q+1})$ from a metric $g\in \Riem_{\std}^{+}(X)$ and replacing it with $(N_{\frac{1}{2}}'\cong D^{p+1}\times S^{q}, g_{\tor}^{p+1}+ds_{q}^{2})$ to obtain a psc-metric $g'\in\Riem^{+}(X')$ (as suggested in  Fig. \ref{GLsurgery}) is a homeomorphism: $$ \Riem_{\std}^{+}(X)\cong \Riem_{\std}^{+}(X').$$
\end{proposition}

As discussed earlier, to prove Chernysh's theorem it remains to show that the inclusion $\Riem_{\std}^{+}(X)\subset \Riem^{+}(X)$ is a weak homotopy equivalence. This is really what the theorem is about and takes the form of Theorem 3.1 in Ebert and Frenck's account in \cite{EF}. As usual, the idea is to exhibit a suitable homotopy of an arbitrary compact family $K\rightarrow \Riem^{+}(X)$. In particular, it must be shown that elements in the image of this map which lie in the subspace $\Riem_{\std}^{+}(X)$ remain there throughout the homotopy. Theorem \ref{GLcompact} certainly allows a homotopy of any such map to one whose image lies in  $\Riem_{\std}^{+}(X)$. Unfortunately, along the way, psc-metrics which are already standard may be temporarily moved out of $\Riem_{\std}^{+}(X)$. 

As the damage to these metrics is not too severe (the Gromov-Lawson construction displaying a great deal of symmetry) this problem is solved by replacing $\Riem_{\std}^{+}(X)$ with a certain larger space of ``almost standard" psc-metrics. This space is denoted $\Riem_{\Astd}^{+}(X)$ and satisfies:
$$ \Riem_{\std}^{+}(X)\subset \Riem_{\Astd}^{+}(X)\subset \Riem^{+}(X).$$ The theorem is proved by showing that each of the above inclusions is a weak homotopy equivalence. The idea is that $\Riem_{\Astd}^{+}(X)$ should capture all adjustments made to a standard psc-metric by the Gromov-Lawson construction. More precisely, suppose $g\in\Riem^{+}(X)$. The Gromov-Lawson construction describes an isotopy from the psc-metric $g$ to a psc-metric $g_{\std}\in\Riem_{\std}^{+}(X)$. Technically, there are many isotopies when one considers the various choices which can be made in performing the construction. In any case, let $\GL:[0,1]\rightarrow\Riem^{+}(X)$ denote such an isotopy. Thus, $\GL(0)=g$ and $\GL(1)=g_{\std}$. Now suppose the starting psc-metric $g$ is already an element of $\Riem_{\std}^{+}(X)$. The space $\Riem_{\Astd}^{+}(X)$ must be defined so that for any such isotopy, $\GL$, and any $\tau\in [0,1]$, the psc-metric $\GL(\tau)$ is an element of 
$\Riem_{\Astd}^{+}(X)$. 

With this in mind, we define the intermediary space of {\em almost standard metrics} on $X$, $\Riem_{\Astd}^{+}(X)$, as:
$$\Riem_{\Astd}^{+}(X):=\{g\in \Riem^{+}(X):\phi_{\frac{1}{2}}^{*}g=ds_{p}^{2}+h_{q+1} \text{ where } h_{q+1}\in \Riem_{\cA\cT}^{+}(D^{q+1})\}. $$ 
\begin{remark}\label{EFcomment}
 Chernysh in \cite{Che} (and in turn Ebert and Frenck in \cite{EF}) work with a slightly larger intermediary space denoted $\Riem_{\mathrm{rot}}^{+}(X)$ of metrics which restrict on the standard region to a product 
 $ds_{p}^{2}+h_{q+1}$ where $h_{q+1}$ is an $O(q+1)$-rotationally symmetric psc-metric on the disk $D^{n}$. Technically $\Riem_{\Astd}^{+}(X)\subset \Riem_{\mathrm{rot}}^{+}(X)$. This difference is of no consequence when it comes to completing the proof.
\end{remark}
\noindent To complete the proof of Chernysh's theorem it remains to show that each of the inclusions:
$$\Riem_{\std}^{+}(X)\subset\Riem_{\Astd}^{+}(X)\subset \Riem^{+}(X),$$
is a weak homotopy equivalence. The second inclusion is less difficult and is dealt with below.
 
\begin{lemma}\label{AstdX}
The inclusion: $$\Riem_{\Astd}^{+}(X)\subset \Riem^{+}(X)$$ is a weak homotopy equivalence.
\end{lemma}
\begin{proof}
Application of steps (1), (2), (3) and then (5) of Gromov-Lawson construction (Theorem \ref{GLthm}) and via Theorem \ref{GLcompact} for compact families of metrics, continuously moves any compact family of metrics in $\Riem^{+}(X)$ to one which lies entirely in $\Riem_{\Astd}^{+}(X)$. (We ignore step (4) as there is no need to adjust the final torpedo radius to make it equal to $1$ here.) It remains to show that metrics which lie in $\Riem_{\Astd}^{+}(X)$ remain in that space throughout the deformation. 

Reviewing steps (1),(2) and (3) and (5) it follows that a metric $g$ in $\Riem_{\Astd}^{+}(X)$ is only affected by steps (2) and (5). Furthermore, step (5) only has the effect of restricting the warping function of an almost torpedo metric, an almost torepdo function of the form $\omega:[0,b]\rightarrow [0,\infty)$, to various subintervals of the form $[0,b']$ where $b'\in(0,b]$. Such a restriction is still an almost torpedo function. Thus, we need only worry about step (2). 

Let us now examine step (2) a little more closely, assuming again that $g\in \Riem_{\Astd}^{+}(X)$.
The adjustment made by step (2) affects only the metric on the disk factor, $D^{q+1}$, and takes place in the radial direction. 
 Consider $\gamma:[0,b]\rightarrow [0,\infty)\times [0,\infty)$, a Gromov-Lawson curve of the type described above with $\gamma(l)=(\gamma_{t}(l), \gamma_r(l))$. 
We will compute the hypersurface metric obtained by applying $\gamma$ to $g$, inside the ambient manifold $([0, \infty)\times N_{{\frac{1}{2}}}, dt^{2}+g|_{N_{{\frac{1}{2}}}})$. The ambient metric takes the form $dt^{2}+dr^{2}+ds_{p}^{2}+\omega(r)^{2}ds_{q}^{2}$, for some almost torpedo function $\omega$, and so the hypersurface metric (which we denote $g_{\gamma}$) is computed in these coordinates as:
\begin{equation*}
\begin{split}
g_{\gamma}=&d(\gamma_{t}(l))^{2}+d(\gamma_{r}(l))^{2}+ds_{p}^{2}+\omega(\gamma_{r}(l))^{2}ds_{q}^{2}\\
=&dl^{2}+ds_{p}^{2}+\omega\circ\gamma_{r}(l)ds_{q}^{2}, 
\end{split}
\end{equation*}
since $\gamma$ has unit speed. A straightforward calculation, making use of the properties of the smooth function $\gamma_{r}$ which are outlined in step (6) of the above description of the Gromov-Lawson construction, shows that the composition $\omega\circ\gamma_{r}$ is also an almost torpedo function. This completes the proof.
\end{proof}

The most difficult part of the whole process concerns showing that the first of the two inclusions above: $\Riem_{\std}^{+}(X)\subset\Riem_{\Astd}^{+}(X)$, is a weak homotopy equivalence. We state the result here in the form of Lemma \ref{stdAstd} below, which when combined with Lemma \ref{AstdX} above completes the proof of Chernysh's theorem. The basic idea is to operate at the level of warping functions and provide a homotopy from what is essentially a ``wild" compact family of almost torpedo functions to a family of torpedo functions. Crucially, already torpedo functions must remain so throughout the process. The proof involves some very delicate analysis, as families of warping functions are ``bent" and ``stretched" into a more manageable form all the while satisfying the curvature constraint. As it is quite long and technical, we refer the reader to the thorough account in \cite{EF}. 
\begin{lemma}\label{stdAstd}
The inclusion: $$\Riem_{\std}^{+}(X)\subset\Riem_{\Astd}^{+}(X)$$ is a weak homotopy equivalence.
\end{lemma}

\subsection{Chernysh's Theorem for Manifolds with Boundary}
The last topic of this section concerns a more general version of Chernysh's theorem for manifolds with boundary. Indeed, as we mentioned in Remark \ref{EFboundary}, this is the situation Ebert and Frenck consider in \cite{EF}. Recalling the discussion at the beginning of section 2, let us assume the manifold $X$ above is the boundary of a smooth compact $(n+1)$-dimensional manifold $W^{n+1}$; thus $\p W=X$. We prescribe a collar neighbourhood embedding $c:X\times [0,2)\rh W$ (which of course implies $c(X\times \{0\})=\p W$) and consider the corresponding space $\Riem^{+}(W, \p W)$. Recall that any metric $\bar{g}\in \Riem^{+}(W, \p W)$ is a psc-metric on $W$ satisfying $c^{*}\bar{g}=g+dt^{2}$ on $X\times I$ for some arbitrary $g\in \Riem^{+}(X)$.

We next consider the disk $D^{p+1}$. Viewing each element $x\in D^{p+1}$ in terms of its standard polar coordinate description, $x=x(r,\theta)$,  we specify the collar neighbourhood embedding: $c_D:[0,1]\times S^{p}\rh D^{p+1}$
by $c_D(r, \theta)=x(1-\frac{r}{2}, \theta)$. Thus, $c_{D}(\{0\}\times S^{p})=\p D^{p+1}$. Suppose now that we have an embedding $\bar{\phi}:D^{p+1}\times D^{q+1}\rh W$ satisfying the following conditions:
\begin{enumerate}
\item[(i.)] $\bar{\phi}|_{S^{p}\times D^{q+1}}=\phi$, where $\phi:S^{p}\times D^{q+1}\hookrightarrow X=\p W$ is the embedding above satisfying $p+q+1=n$ and $p,q\geq 2$.
\item[(ii.)] $\bar{\phi}(c_D(r,\theta), y)=c(r, \phi(\theta, y))$, for all $(r, \theta, y)\in I\times S^{p}\times D^{q+1}$.
\end{enumerate}
The embedding $\bar{\phi}$ is considered to be {\em compatible} with the collar $c$; see the left hand image in Fig. \ref{Chebd} for a schematic illustration. Recalling that $N:=\phi(S^{p}\times D^{q+1})$, we set $\bar{N}:=\bar{\phi}(D^{p+1}\times D^{q+1})$. Having previously defined the neighbourhood $N_{\rho}:=\phi_{\rho}(S^{p}\times D^{q+1})$, we now define $\bar{N}_{\rho}:=\bar{\phi}_{\rho}(D^{p+1}\times D^{q+1})$ , where $\rho\in(0,1]$ and $\bar{\phi}_{\rho}$ is defined analogously to $\phi_{\rho}$.  Fixing torpedo metrics $g_{\tor}^{p+1}$ on $D^{p+1}$ and $g_{\tor}^{q+1}$ on $D^{q+1}$, we can now define the space $\Riem_{\std}^{+}(W, \p W)$ as:
$$ \Riem_{\std}^{+}(W, \p W):=\{\bar{g}\in \Riem^{+}(W, \p W): \bar\phi_{\frac{1}{2}}^{*}(\bar{g})=g_{\tor}^{p+1}+g_{\tor}^{q+1} \text{ on } D^{p+1}\times D^{q+1}\}.$$
The right hand image in Fig. \ref{Chebd} provides a schematic picture of a typical metric in $\Riem_{\std}^{+}(W, \p W)$.
  
\begin{figure}[!htbp]
\vspace{1cm}
\hspace{0cm}
\begin{picture}(0,0)
\includegraphics{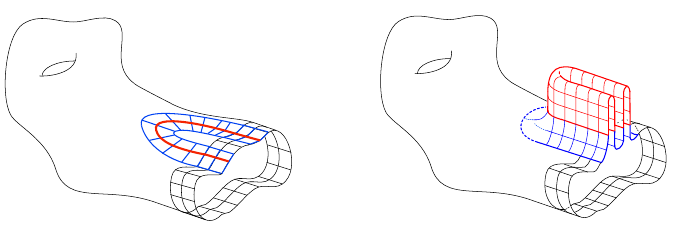}%
\end{picture}
\setlength{\unitlength}{3947sp}%
\begingroup\makeatletter\ifx\SetFigFont\undefined%
\gdef\SetFigFont#1#2#3#4#5{%
  \reset@font\fontsize{#1}{#2pt}%
  \fontfamily{#3}\fontseries{#4}\fontshape{#5}%
  \selectfont}%
\fi\endgroup%
\begin{picture}(5079,1559)(1902,-7227)
\put(2800,-6500){\makebox(0,0)[lb]{\smash{{\SetFigFont{10}{8}{\rmdefault}{\mddefault}{\updefault}{\color[rgb]{0,0,1}${\bar{N}}$}%
}}}}
\put(3800,-7150){\makebox(0,0)[lb]{\smash{{\SetFigFont{10}{8}{\rmdefault}{\mddefault}{\updefault}{\color[rgb]{0,0,0}$c(X\times I)$}
}}}}
\put(6900,-6000){\makebox(0,0)[lb]{\smash{{\SetFigFont{10}{8}{\rmdefault}{\mddefault}{\updefault}{\color[rgb]{1,0,0}$g_{\tor}^{p+1}+g_{\tor}^{q+1}$}%
}}}}
\end{picture}%
\caption{The embedding $\bar{\phi}$  (left) and an arbitrary metric in $\Riem_{\std}^{+}(W, \p W)$ (right) }
\label{Chebd}
\end{figure}   
We further define the analogous spaces:
$$\Riem_{\cT}^{+}(W, \p W):=\{\bar{g}\in \Riem^{+}(W, \p W): \bar{\phi}_{\frac{1}{2}}^{*}\bar{g}=g_{\tor}^{p+1}+h_{q+1} \text{ on } D^{p+1}\times D^{q+1}, h_{q+1}\in\Riem_{\cT}^{+}(D^{q+1})\},$$
$$\Riem_{\cT(1)}^{+}(W, \p W):=\{\bar{g}\in \Riem^{+}(W, \p W): \bar{\phi}_{\frac{1}{2}}^{*}\bar{g}=g_{\tor}^{p+1}+h_{q+1} \text{ on } D^{p+1}\times D^{q+1}, h_{q+1}\in \Riem_{\cT(1)}(D^{q+1})\},$$
$$\Riem_{\Astd}^{+}(W, \p W):=\{\bar{g}\in \Riem^{+}(W, \p W):\bar\phi_{\frac{1}{2}}^{*}\bar{g}=g_{\tor}^{p+1}+h_{q+1} \text{ on } D^{p+1}\times D^{q+1}, h_{q+1}\in \Riem_{\cA\cT}^{+}(D^{q+1})\}.$$
Applying the methods used in the proof of Theorem \ref{Chernysh} above, it is a straightforward exercise to show that the inclusions:
$$
\Riem_{\std}^{+}(W, \p W)\subset\Riem_{\cT(1)}^{+}(W, \p W)\subset   \Riem_{\cT}^{+}(W, \p W)\subset \Riem_{\Astd}^{+}(W, \p W)\subset \Riem^{+}(W, \p W),
$$
are all weak homotopy equivalences. Thus, we have the following version of Chernysh's theorem for manifolds with boundary. This boundary version of Chernysh's theorem, in a somewhat more general setting, is what is proved by Ebert and Frenck in \cite{EF}. 

\begin{theorem} \cite{EF} The inclusion:
$$
\Riem_{\std}^{+}(W, \p W)\subset \Riem^{+}(W, \p W),
$$
is a weak homotopy equivalence.
\end{theorem}

We close this discussion by considering, for a fixed $h\in \Riem^{+}(X)$, the subspace $\Riem^{+}(W, \p W)_h$ of  $\Riem^{+}(W, \p W)$  consisting of psc-metrics on $W$ which restrict on the boundary $\p W$ as $h$. We reiterate that $\Riem^{+}(W, \p W)_h$ may be empty. Suppose now that the fixed psc-metric $h$ is an element of $\Riem_{\std}^{+}(X)$, the space of standardised metrics on $X$ defined above with respect to the embedding $\phi$ (the restriction of $\bar{\phi}$ to $S^{p}\times D^{q+1}$) above. For such a metric $h$, we can sensibly define the analogous space of standard metrics on $W$, $\Riem_{\std}^{+}(W, \p W)_{h}$, as:
$$ \Riem_{\std}^{+}(W, \p W)_h:= \Riem^{+}(W, \p W)_h\cap \Riem_{\std}^{+}(W, \p W)= \{\bar{g}\in \Riem^{+}(W, \p W)_h: \bar\phi_{\frac{1}{2}}^{*}(\bar{g})=g_{\tor}^{p+1}+g_{\tor}^{q+1}\}.$$ The following lemma will play an important role in later sections.
\begin{lemma}\label{stdincludeg}
For any $h\in \Riem_{\std}^{+}(X)$, the inclusion: $$\Riem_{\std}^{+}(W, \p W)_h\subset \Riem^{+}(W, \p W)_h$$ is a weak homotopy equivalence.
\end{lemma}

Before proving this lemma there is a space we need to introduce. With $h\in \Riem_{\std}^{+}(X)$ fixed as before, we define the space
$\Riem^{+}(X)_{\Astd(h)}$ as:
$$\Riem^{+}(X)_{\Astd(h)}:=\{g\in \Riem_{\Astd}^{+}(X): g|_{X\setminus N_{\frac{1}{2}}}=h|_{X\setminus N_{\frac{1}{2}}}\}.$$
Thus, we take our fixed $h\in\Riem_{\std}^{+}(X)$ and loosen the condition on $N_{\frac{1}{2}}$ so the metric need only be almost standard there. From here we define $\Riem^{+}(W, \p W)_{\Astd(h)}$ as:
$$\Riem^{+}(W, \p W)_{\Astd(h)}:=\{\bar{g}\in \Riem^{+}(W, \p W):\bar{g}|_{\p W}\in \Riem^{+}(X)_{\Astd(h)} \}. $$
Thus, elements of $\Riem^{+}(W, \p W)_{\Astd(h)}$ are metrics which on the boundary take the form of a psc-metric on $X$ which agrees with $h$ outside the neighbourhood $N_{\frac{1}{2}}$, but are only almost standard on $N_{\frac{1}{2}}$. 
\begin{proposition}\label{stdcontract} The inclusion: $$\{h\}\subset \Riem^{+}(X)_{\Astd(h)}$$ is a weak homotopy equivalence.
\end{proposition}
\begin{proof} Making no adjustment to metrics outside of $N_{\frac{1}{2}}$, this follows from Lemma \ref{stdAstd} followed by application of Lemma \ref{torpcontract}.
\end{proof}

\begin{proof}[Proof of Lemma \ref{stdincludeg}]
Consider the commutative diagram of inclusion maps below:
\begin{equation*}
\xymatrix{
&\Riem^{+}(W, \p W)_h \ar@{^{(}->}[d]\\
\Riem_{\std}^{+}(W, \p W)_h\ar@{^{(}->}[r] \ar@{^{(}->}[ru]& \Riem^{+}(W, \p W)_{\Astd(h)}. }
\end{equation*}
To prove the diagonal inclusion is a weak homotopy equivalence (thus proving the lemma), we need only show that the vertical and horizontal inclusions are weak homotopy equivalences.
Beginning with the horizontal inclusion, we see that this map is the composition of inclusions:
$$ \Riem_{\std}^{+}(W, \p W)_h \hookrightarrow \Riem_{\Astd}^{+}(W, \p W) \cap\Riem^{+}(W, \p W)_{\Astd(h)}\hookrightarrow \Riem^{+}(W, \p W)_{\Astd(h)}.$$
That the maps in this composition are weak homotopy equivalences follows by application of Lemma \ref{stdAstd} in the case of the first map and Lemma \ref{AstdX} in the case of the second. We note here that any adjustments to metrics in these spaces, by application of these lemmas, take place only inside $\bar{N}_{\frac{1}{2}}$.  Moreover, the loosening of the boundary requirement for metrics in $\Riem^{+}(W, \p W)_{\Astd(h)}$ means that compact families of metrics in this space remain there under application of the Gromov-Lawson process.

We now turn our attention to the vertical inclusion. Let $K\rightarrow \Riem^{+}(W, \p W)_{\Astd(h)}, k\mapsto \bar{g}_k$, be a compact family of metrics. For each $k\in K$, consider now the metric $\bar{g}_{k}|_{c(X\times I)}$, the restriction of $\bar{g}_{k}$ to the region $c(X\times I)$ on $W$. By hypothesis, this metric pulls back via the collar embedding $c$ to a product: $g_{k}+dt^{2}$ on $X\times I$, where $g_{k}:=\res(\bar{g}_k)\in \Riem^{+}(X)_{\Astd(h)}$, the restriction of $\bar{g}_k$ to the boundary.

By Proposition \ref{stdcontract}, there is a homotopy $F: K\times I\rightarrow \Riem^{+}(X)_{\Astd(h)}$ satisfying:
\begin{enumerate}
\item[(i.)] $F(0,k)=g_k$ for all $k\in K$ 
\item[(ii.)] $F(1,k)=h$ for all $k\in K$,
\item[(iii.)] $F(\tau,k)=h$ for all $k\in K$ satisfying $F(0, k)=h$ and all $\tau\in I$.
\end{enumerate}
By Lemma \ref{welldefconc}, there is a constant $L_{K}\geq 1$ so that the map $K\rightarrow \Riem^{+}(X\times[0,L_{K}+2], X\times\{0\}\sqcup X\times\{L_{K}+2\} )$ defined by:
$$k\longmapsto F({{\nu_{L_{K}}(t)}, k})+dt^{2},$$
defines a continuous family of concordances on $X\times [0,L_{K}+2]$. 
Moreover, by Corollary \ref{conchomot} there is a homotopy between this map and the trivial concordance map: $$ k\mapsto g_{k}+dt^{2}. $$ Let us denote this homotopy, $H$. Importantly, at each stage in this homotopy, the restriction of the concordance to $X\times [0,1]$ takes the form $g_k+dt^{2}$. 

Returning to the initial compact family $K\rightarrow \Riem^{+}(W, \p W)_{\Astd(h)}$, we begin by continuously adjusting these metrics on the collar. Consider the newly rescaled collar, $\bar{c}:X\times [0, L_{K}+2]\rh W$ given by precomposing the restricted collar embedding $c|_{X\times I}$ with the rescaling map:
\begin{equation*}
\begin{split}
X\times [0, L_{K}+2]&\longrightarrow X\times [0,1]\\
t&\longmapsto 1-\frac{t}{L_{K}+2}.
\end{split}
\end{equation*}
We note that this maps also flips the direction of the $t$-coordinate. We now continuously stretch the collar parts of all metrics in the family so that for each $k\in K$, $\bar{c}^{*}{\bar{g}_k}=g_{k}+dt^{2}$ on $X\times [0, L_{K}+2]$. (We retain the name $\bar{g}_k$ for this stretched collar metric.)
Next we continuously adjust the restricted original collar $c|_{X\times I}:X\times I\rh W$ (retaining the original name) to one which satisfies: 
$$ c|_{X\times I}(x,t)=\bar{c}(x, L_{K}+2-t).$$
This is depicted schematically for a lone metric $\bar{g}_k$ in the second image of Fig. \ref{collarstretch}.
An application of the homotopy $H$ above above completes the proof; see third image of Fig. \ref{collarstretch}.  Note that from the previous  paragraph, $H$ fixes each metric in the family as $g_k+dt^{2}$ on the region $\bar{c}(X\times I)$, where the new extended collar meets the rest of $W$ thus facilitating a smooth transition.

\begin{figure}[!htbp]
\vspace{0cm}
\hspace{-3.5cm}
\begin{picture}(0,0)
\includegraphics{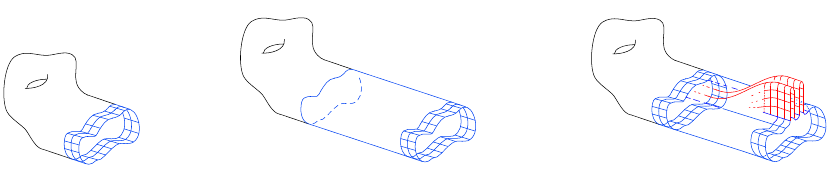}%
\end{picture}
\setlength{\unitlength}{3947sp}%
\begingroup\makeatletter\ifx\SetFigFont\undefined%
\gdef\SetFigFont#1#2#3#4#5{%
  \reset@font\fontsize{#1}{#2pt}%
  \fontfamily{#3}\fontseries{#4}\fontshape{#5}%
  \selectfont}%
\fi\endgroup%
\begin{picture}(5079,1559)(1902,-7227)
\put(2500,-6400){\makebox(0,0)[lb]{\smash{{\SetFigFont{10}{8}{\rmdefault}{\mddefault}{\updefault}{\color[rgb]{0,0,0}$(W, \bar{g}_k)$}%
}}}}
\put(3000,-7050){\makebox(0,0)[lb]{\smash{{\SetFigFont{10}{8}{\rmdefault}{\mddefault}{\updefault}{\color[rgb]{0,0,1}$c(X\times I)$}
}}}}
\put(5000,-6500){\makebox(0,0)[lb]{\smash{{\SetFigFont{10}{8}{\rmdefault}{\mddefault}{\updefault}{\color[rgb]{0,0,1}$\bar{c}(X\times[0,\lambda_{K}+2])$}%
}}}}

\put(5700,-7000){\makebox(0,0)[lb]{\smash{{\SetFigFont{10}{8}{\rmdefault}{\mddefault}{\updefault}{\color[rgb]{0,0,1}$c(X\times I)$}%
}}}}
\end{picture}%
\caption{An arbitrary metric in the family $\bar{g}_k, k\in K$(left), adjusted by metrically stretching the collar (middle) and the element of  $\Riem_{\std}^{+}(W, \p W)_h$ obtained from this by completing the homotopy $H$ (right) }
\label{collarstretch}
\end{figure}   
\end{proof}

We finish with a slight generalisation of Lemma \ref{stdincludeg} which will be of use later. First, recall that for each $\lambda>0$, $\Riem_{\std(\lambda)}^{+}(X)$ denotes the space of psc-metrics on $X$ which on $N_{\frac{1}{2}}$ have a standard form with torpedo factor of neck-length $\lambda$. Suppose we replace the metric $h\in \Riem_{\std}^{+}(X)$ in Lemma \ref{stdincludeg} with $h(\lambda)\in \Riem_{\std(\lambda)}^{+}(X)$. Importantly, $h(1)=h$ and for all $\lambda$:
$$h|_{X\setminus{N_{\frac{1}{2}}}}=h(\lambda)|_{X\setminus{N_{\frac{1}{2}}}}\text{ while } \phi_{\frac{1}{2}}^{*}h(\lambda)=ds_{p}^{2}+g_{\tor}^{q+1}(1)_{\lambda}.$$
It is clear that the theorem goes through just as well in this case. That is, the inclusion $\Riem_{\std}^{+}(W, \p W)_{h(\lambda)}\subset \Riem^{+}(W, \p W)_{h(\lambda)}$ is a weak homotopy equivalence.
We go only slightly further. Defining:
$$\Riem_{\std}^{+}(W, \p W)_{h(-)}:=\bigcup_{\lambda}\Riem_{\std}^{+}(W, \p W)_{h(\lambda)}\text{ and }\Riem^{+}(W, \p W)_{h(-)}:=\bigcup_{\lambda}\Riem^{+}(W, \p W)_{h(\lambda)},$$ we have the following commutative diagram of inclusion maps.
\begin{equation}
\xymatrix{
& \Riem_{\std}^{+}(W, \p W)_{h(\lambda)}\ar@{^{(}->}[d]^{}\ar@{^{(}->}[r]^{}& \Riem^{+}(W, \p W)_{h(\lambda)}\ar@{^{(}->}[d]^{}\\
&\Riem_{\std}^{+}(W, \p W)_{h(-)} \ar@{^{(}->}[r]^{} & \Riem^{+}(W, \p W)_{h(-)}  }
\end{equation}
Showing weak homotopy equivalence for the vertical maps in this diagram is a straighforward application of Lemma \ref{isotopyimpliesconc} and Corollary \ref{conchomot}. The top horizontal map is a weak homotopy equivalence by Lemma \ref{stdincludeg}. Thus, we obtain the following lemma.
\begin{lemma}\label{stdincludeggen}
The inclusion: $$\Riem_{\std}^{+}(W, \p W)_{h(-)}\subset \Riem^{+}(W, \p W)_{h(-)}$$ is a weak homotopy equivalence.
\end{lemma}

\section{Variations on the Torpedo Metric}\label{vartorp}
Before proceeding any further, there are some variations on the torpedo metric which we must discuss. Much of this is based on section $2$ of \cite{Walsh2} where we perform various types of cutting, pasting, stretching and bending of the original torpedo metric. In particular, we will define the so-called ``boot metric" which will be important when proving our main result, Theorem A. Throughout this chapter we assume that $n\geq 3$.

\subsection{Toes}
We begin by denoting as $S_{-}^{n}$ and $S_{+}^{n}$ the respective hemispheres  $\{x\in\mathbb{R}^{n+1}:|x|=1, x_{n+1}\leq 0\}$ and $\{x\in\mathbb{R}^{n+1}:|x|=1, x_{n+1}\geq 0\}$. 
We then denote by $D_{-}^{n}$ and $D_{+}^{n}$, the hemi-disks $\{x\in{D}^{n}: x_{n}\leq 0\}$ and $\{x\in{D}^{n}: x_{n}\geq 0\}$. These hemi-disks are, of course, smooth manifolds with corners, where:
$$
\partial(D_{\pm}^{n})=S_{\pm}^{n-1}\cup D^{n-1}\quad {\mathrm{and}} \quad
\partial^{2}(D_{\pm}^{n}) = \partial S_{\pm}^{n-1} = \partial D^{n-1} = S_{\pm}^{n-1}\cap D^{n-1}=S^{n-2}.
$$

The restriction of a $\delta-\lambda$-torpedo metric on $D^{n}$ (recall this is a torpedo metric with radius $\delta>0$ and neck-length $\lambda\geq 0$),
$g_{\tor}^{n}(\delta)_{\lambda}$, to each of the  hemi-disks gives metrics:
$$
g_{\tor\pm}^{n}(\delta)_{\lambda}:=g_{\tor}^{n}(\delta)_{\lambda}|_{D_{\pm}^{n}}.
$$
These metrics restrict on the boundary components as:
$$
g_{\tor\pm}^{n}(\delta)_{\lambda}|_{S_{\pm}^{n-1}}=\delta^{2}ds_{n-1}^{2}|_{S_{\pm}^{n-1}}  \quad {\mathrm{and}} \quad g_{\tor\pm}^{n}(\delta)_{\lambda}|_{{D}^{n-1}} = g_{\tor}^{n-1}(\delta)_{\lambda}.
$$ 
Typically, we will require that $\lambda=1$ and also often $\delta=1$. We adopt the following abbreviations: $$ g_{\tor\pm}^{n}(\delta):=g_{\tor\pm}^{n}(\delta)_{1}\quad \text{ and }\quad g_{\tor\pm}^{n}:=g_{\tor\pm}^{n}(1).$$
We turn our attention now to the cylinder $D^{n}\times I$. We may glue this to the hemi-disk $D_{+}^{n+1}$ (or $D_{-}^{n+1}$), in the obvious way, to obtain (after suitable smoothing) the manifold with corners $D_{\stret}^{n+1}$ depicted below in Fig. \ref{Torpcorners}. Roughly speaking:
$$
D_{\stret}^{n+1}\approx D_{+}^{n+1}\cup (D^{n}\times I).
$$
We would like to equip it with a metric $\hat{g}_{\tor}^{n+1}(\delta)_{\lambda_1, \lambda_2}$ (depicted in Fig. \ref{Torpcorners}) which essentially satisfies: 
$$
\hat{g}_{\tor}^{n+1}(\delta)_{\lambda_1, \lambda_2}|_{D_{+}^{n+1}}=g_{\tor+}^{n+1}(\delta)_{\lambda_1}\quad \mathrm{and} \quad \hat{g}_{\tor}^{n+1}(\delta)_{\lambda_1, \lambda_2}|_{D^{n}\times I} \text{ is isometric to }g_{\tor}^{n}(\delta)_{\lambda_1}+dt^{2},
$$
where $t\in[0,\lambda_2]$. 
As it stands, simply gluing these metric pieces together will not result in a smooth metric. However, it is possible to do this smoothly producing a metric which, away from the attachment submanifold is precisely the metric described above. This is done in great detail in Lemma 2.1 of \cite{Walsh2} and so we will not discuss it any further here. Henceforth, $\hat{g}_{\tor}^{n+1}(\delta)_{\lambda_1, \lambda_2}$ denotes the metric obtained by the above attachment after suitable smoothing; see the rightmost image in Fig. \ref{Torpcorners}. This metric is known as a {\em $\delta$-toe metric} for reasons which will become clear shortly. The parameter $\lambda_1$ will frequently be referred to as the ``straight height" of the toe while $\lambda_2$ will be regarded as the ``base-length".

The following proposition follows easily from the curvature calculations done in section 2 of \cite{Walsh2}.
\begin{proposition}
Let $n\geq 3$. For any $\delta>0$ and $\lambda, \lambda_1, \lambda_2\geq 0$, each of the metrics $g_{\tor\pm}^{n}(\delta)_{\lambda}$ and $\hat{g}_{\tor}^{n+1}(\delta)_{\lambda_1, \lambda_2}$ has positive scalar curvature. Moreover, the scalar curvature of any such metric can be bounded below by an arbitrarily large positive constant by choosing $\delta>0$ sufficiently small.
\end{proposition}

As we have used a good deal of notation here, let us try to clean things up a little. Regarding the metric $\hat{g}_{\tor}^{n+1}(\delta)_{\lambda_1, \lambda_2}$, we will usually require that the straight height $\lambda_1=1$. This is to ensure compatibility during certain gluings. Specifiying the base-length parameter $\lambda_2$ is less important and it could be left free to vary. However, to simplify things, we set it at $1$ also. We write simply $\hat{g}_{\tor}^{n+1}(\delta)$ to denote the $\delta$-toe metric defined:
$$ \hat{g}_{\tor}^{n+1}(\delta):=\hat{g}_{\tor}^{n+1}(\delta)_{1, 1}.$$
Finally, in the case when $\delta=1$ we write:
$$ \hat{g}_{\tor}^{n+1}:=\hat{g}_{\tor}^{n+1}(1).$$ 

\begin{figure}[!htbp]
\vspace{3cm}
\hspace{2cm}
\begin{picture}(0,0)
\includegraphics{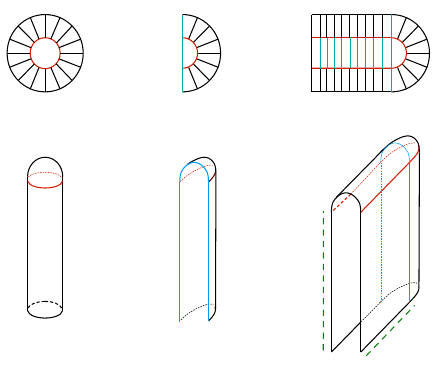}%
\end{picture}
\setlength{\unitlength}{3947sp}
\begingroup\makeatletter\ifx\SetFigFont\undefined%
\gdef\SetFigFont#1#2#3#4#5{%
  \reset@font\fontsize{#1}{#2pt}%
  \fontfamily{#3}\fontseries{#4}\fontshape{#5}%
  \selectfont}%
\fi\endgroup%
\begin{picture}(5079,1559)(1902,-7227)
\put(4200,-6500){\makebox(0,0)[lb]{\smash{{\SetFigFont{10}{8}{\rmdefault}{\mddefault}{\updefault}{\color[rgb]{0,0.6,0}$\lambda_1$}%
}}}}
\put(5000,-7000){\makebox(0,0)[lb]{\smash{{\SetFigFont{10}{8}{\rmdefault}{\mddefault}{\updefault}{\color[rgb]{0,0.6,0}$\lambda_2$}%
}}}}
\end{picture}%
\caption{The metrics $g_{\tor}^{n+1}(\delta)_{\lambda}$, $g_{\tor+}^{n+1}(\delta)_{\lambda}$ and $\hat{g}_{\tor}^{n+1}(\delta)_{\lambda_1, \lambda_2}$ (bottom) on the manifolds $D^{n+1}$, $D_{+}^{n+1}$ and $D_{\stret}^{n+1}$ (top) }
\label{Torpcorners}
\end{figure}

In the original Gromov-Lawson construction, a geometric surgery is performed on a Riemannian manifold of positive scalar curvature by replacing a standardised part of the metric, of the form $ds_{p}^{2}+g_{\tor}^{q+1}$, with a new standard piece $g_{\tor}^{p+1}+ds_{q+1}^{2}$. As we will see shortly, in the case of manifolds with boundary, analogous roles will be played by the standard metrics $ds_{p}^{2}+\hat{g}_{\tor}^{q+2}$ and $g_{\tor}^{p+1}+g_{\tor}^{q+1}$; see Fig. \ref{bootprodl}. The common neck-length $1$ on the second factors is necessary for these pieces to appropriately line up.

\begin{figure}[htb!]
\vspace{-1cm}
\hspace{-6cm}
\begin{picture}(0,0)%
\includegraphics{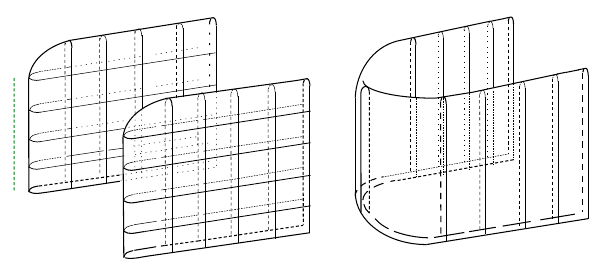}%
\end{picture}%
\setlength{\unitlength}{3947sp}%
\begingroup\makeatletter\ifx\SetFigFont\undefined%
\gdef\SetFigFont#1#2#3#4#5{%
  \reset@font\fontsize{#1}{#2pt}%
  \fontfamily{#3}\fontseries{#4}\fontshape{#5}%
  \selectfont}%
\fi\endgroup%
\begin{picture}(2079,2559)(1902,-5227)
\put(1900,-4200){\makebox(0,0)[lb]{\smash{{\SetFigFont{10}{8}{\rmdefault}{\mddefault}{\updefault}{\color[rgb]{0,0.6,0}${1}$}
}}}}
\end{picture}%
\caption{The metric $ds_{p}^{2}+\hat{g}_{\tor}^{q+2} $ (left) and $g_{\tor}^{p+1}+g_{\tor}^{q+1}$ (right)}
\label{bootprodl}
\end{figure} 

\subsection{Boots}

We begin with a cylinder metric $(D^{n}\times [0, L+2], g_{\tor}^{n}(\delta)_\lambda+dt^{2})$, for some $ \delta>0, \lambda\geq 0$ and $L\geq 0$. 
It is convenient to think of this metric as obtained by way of an embedding into $\mathbb{R}^{n+2}=\mathbb{R}\times\mathbb{R}^{n}\times\mathbb{R}$, equipped with coordinates $(x_0, (x_1,\cdots, x_n), x_{n+1})$, as follows. Recall that the torpedo metric $g_{\tor}^{n}(\delta)_\lambda$ takes the form $dr^{2}+\eta_{\delta, \lambda}(r)^{2}ds_{n-1}^{2}$ on $D^{n}(\delta\frac{\pi}{2}+\lambda)$ for an appropriate warping function $\eta_{\delta, \lambda}:[0, \delta\frac{\pi}{2}+\lambda]\rightarrow [0,\infty)$. As discussed at the beginning of section \ref{disksection}, such a metric is obtained via an embedding: 
\begin{equation*}
\begin{array}{rl}
(0, \delta\frac{\pi}{2}+\lambda)\times S^{n-1}&\longrightarrow\mathbb{R}\times\mathbb{R}^{n}\\
(r,\theta)&\longmapsto(\alpha_{\delta, \lambda}(r), \eta_{\delta, \lambda}(r).\theta),
\end{array}
\end{equation*}
where $\alpha_{\delta, \lambda}:[0, \delta\frac{\pi}{2}+\lambda]\rightarrow [0,\infty)$ is a smooth function which along with $ \eta_{\delta, \lambda}$ satisfy the conditions of $\alpha$ and $\beta$ respectively in \ref{beta0}. In particular, the metric $g_{\tor}^{n}(\delta)_\lambda$ is obtained by pulling back the Euclidean metric on $\mathbb{R}^{n+1}$ via this embedding. With this in mind we now define a cylindrical version of this embedding as:
\begin{equation*}
\begin{array}{rl}
\cyl: (0, \delta\frac{\pi}{2}+\lambda)\times S^{n-1}\times[0,L+2]&\longrightarrow\mathbb{R}\times\mathbb{R}^{n}\times\mathbb{R}\\
(r,\theta, t)&\longmapsto(\alpha_{\delta, \lambda}(r), \eta_{\delta, \lambda}(r).\theta, t).
\end{array}
\end{equation*}
The image of this map is schematically depicted in the left image of Fig. \ref{bendingcylinder}.
We would like to adjust this metric by bending it around an angle $\gamma\in[0,\frac{\pi}{2}]$ in the $(x_0-x_{n+1})$-plane as shown in Fig. \ref{bendingcylinder} below. The bottom of the cylinder (near the origin) is pulled in the positive direction along the $x_0$ and $x_{n+1}$-axes while the top remains stationary. Importantly, we wish to maintain positivity of the scalar curvature of the metric at every stage while making no change to the metric near the the ends.  Regarding the latter problem, we specify collars $D^{n}\times [0, 1]$ and $D^{n}\times [L+1, L+2]$ wherein the metric should remain unaltered. We will make all adjustments on the region $D^{n}\times [1, L+1]$. 
Maintaining positive scalar curvature is possible provided the bend is done gradually over a wide ``bending arc".

\begin{figure}[htb!]
\vspace{4cm}
\hspace{-8cm}
\begin{picture}(0,0)%
\includegraphics{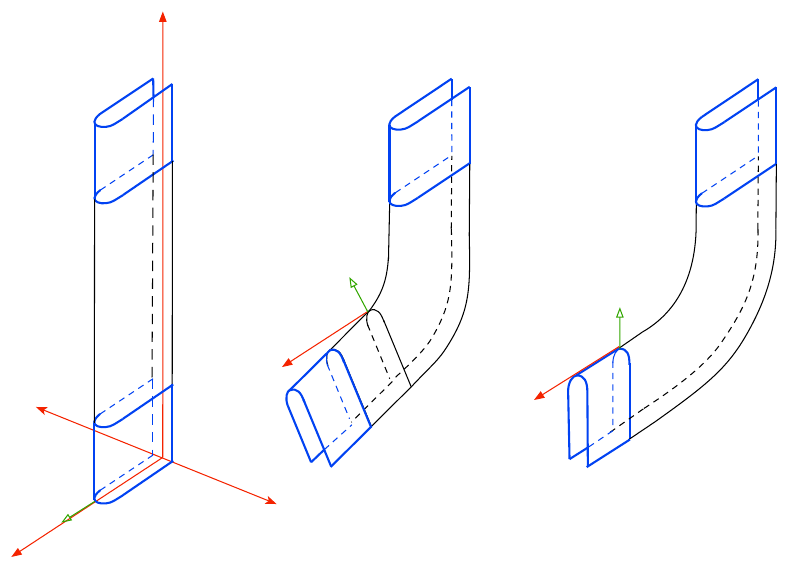}%
\end{picture}%
\setlength{\unitlength}{3947sp}%
\begingroup\makeatletter\ifx\SetFigFont\undefined%
\gdef\SetFigFont#1#2#3#4#5{%
  \reset@font\fontsize{#1}{#2pt}%
  \fontfamily{#3}\fontseries{#4}\fontshape{#5}%
  \selectfont}%
\fi\endgroup%
\begin{picture}(2079,2559)(1902,-5227)
\put(3310,-1700){\makebox(0,0)[lb]{\smash{{\SetFigFont{10}{8}{\rmdefault}{\mddefault}{\updefault}{\color[rgb]{0,0,1}$D^{n}\times [L+1, L+2]$}
}}}}
\put(1450,-3000){\makebox(0,0)[lb]{\smash{{\SetFigFont{10}{8}{\rmdefault}{\mddefault}{\updefault}{\color[rgb]{0,0,0}$D^{n}\times [1, L+1]$}
}}}}
\put(3310,-4200){\makebox(0,0)[lb]{\smash{{\SetFigFont{10}{8}{\rmdefault}{\mddefault}{\updefault}{\color[rgb]{0,0,1}$D^{n}\times [0, 1]$}
}}}}
\put(2150,-4700){\makebox(0,0)[lb]{\smash{{\SetFigFont{10}{8}{\rmdefault}{\mddefault}{\updefault}{\color[rgb]{0,0.6,0}$\gamma=0$}
}}}}
\put(1200,-3800){\makebox(0,0)[lb]{\smash{{\SetFigFont{10}{8}{\rmdefault}{\mddefault}{\updefault}{\color[rgb]{1,0,0}$(x_1, \cdots, x_n)$-plane}
}}}}
\put(1470,-5300){\makebox(0,0)[lb]{\smash{{\SetFigFont{10}{8}{\rmdefault}{\mddefault}{\updefault}{\color[rgb]{1,0,0}$x_0$-axis}
}}}}
\put(2430,-1000){\makebox(0,0)[lb]{\smash{{\SetFigFont{10}{8}{\rmdefault}{\mddefault}{\updefault}{\color[rgb]{1,0,0}$x_{n+1}$-axis}
}}}}
\put(3960,-3140){\makebox(0,0)[lb]{\smash{{\SetFigFont{10}{8}{\rmdefault}{\mddefault}{\updefault}{\color[rgb]{0,0.6,0}$0<\gamma<\frac{\pi}{2}$}
}}}}
\put(6360,-3300){\makebox(0,0)[lb]{\smash{{\SetFigFont{10}{8}{\rmdefault}{\mddefault}{\updefault}{\color[rgb]{0,0.6,0}$\gamma=\frac{\pi}{2}$}
}}}}
\end{picture}%
\caption{Bending the cylinder metric $g_{\tor}^{n}(\delta)_\lambda+dt^{2}$}
\label{bendingcylinder}
\end{figure} 

We will begin by considering what happens when we bend a cylinder metric with positive scalar curvature around a circular arc without worrying about maintaining a product form near the boundary. Once we can do this in a way that preserves positive scalar curvature, we will then adjust this construction to obtain the desired boundary product structure. We will first perform the bending on a cylinder $S^{n}\times [0,\gamma]$ for where $\gamma\in(0,\frac{\pi}{2}]$, knowing that this takes care of the case of $D^{n}\times [0,\gamma]$ by restriction. We will also consider the more general case of a psc-metric $g_{\beta}^{n}$ on $S^{n}$ where $\beta:[0,b]\rightarrow [0,\infty)$ satisfies the conditions of \ref{beta0} and \ref{beta}.  
In order to ensure a wide circular arc when we bend, we replace the embedding $\cyl$ by:
\begin{equation*}
\begin{array}{rl}
\cyl_{\beta, \gamma, \Lambda}: (0,b)\times S^{n-1}\times[0,\gamma]& \longrightarrow\mathbb{R}\times\mathbb{R}^{n}\times [0,\infty)\\
(r,\theta, t)&\longmapsto( \Lambda+\alpha(r), \beta(r).\theta, t),
\end{array}
\end{equation*}
where $\gamma\in(0, \frac{\pi}{2}]$ and  $\Lambda>0$ are constants while $\alpha$ and $\beta$ are related as in \ref{beta0}. We assume at least that $\Lambda>|\alpha(b)|$ so that the image of this embedding, depicted in the left image of Fig. \ref{cylinderandbend} below, lies inside $(0, \infty)\times\mathbb{R}^{n}\times[0,\infty)$. 
We now consider the map:
\begin{equation*}
\begin{split}
\bend:(0, \infty)\times\mathbb{R}^{n}\times [0,\infty)&\longrightarrow\mathbb{R}\times\mathbb{R}^{n}\times\mathbb{R}\\
(x_0, (x_{1},\cdots, x_n), x_{n+1})&\longmapsto (x_0\cos(x_{n+1}), (x_1,\cdots, x_n), x_0\sin(x_{n+1})).
\end{split}
\end{equation*}
The following proposition is immediate.
\begin{proposition}
For any $\epsilon>0$, the restriction of the map $\bend$ to $(\epsilon, \infty)\times\mathbb{R}^{n}\times [0,\infty)$ is a smooth embedding.
\end{proposition}
\noindent We denote by $g_{\beta, \gamma, \Lambda}^{n+1}$, the metric defined by pulling back, via the composition map $\bend\circ\cyl_{\beta, \gamma, \Lambda}$, the standard Euclidean metric on $\mathbb{R}\times\mathbb{R}^{n}\times\mathbb{R}$. Thus, setting $g_{\Euc}^{n+2}=\sum_{i=1}^{n+2}dx_{i}^{2}$, we have:
$$g_{\beta, \gamma, \Lambda}^{n+1}:=(\bend\circ\cyl_{\beta, \gamma, \Lambda})^{*}g_{\Euc}^{n+2},$$
inducing a metric on $S^{n-1}\times[0,\gamma]$. The cases when $\gamma\in(0,\frac{\pi}{2})$ and $\gamma=\frac{\pi}{2}$ are respectively depicted as the images of appropriate embeddings in the middle and right of Fig. \ref{cylinderandbend}. 
\begin{figure}[!htbp]
\vspace{0mm}
\hspace{-12cm}
\begin{picture}(0,0)%
\includegraphics{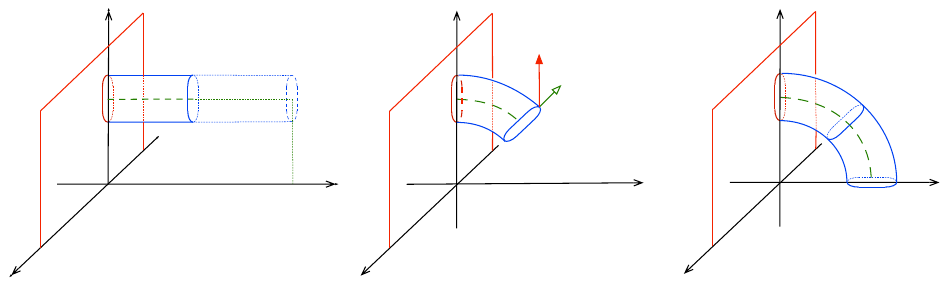}%
\end{picture}%
\setlength{\unitlength}{3947sp}%
\begingroup\makeatletter\ifx\SetFigFont\undefined%
\gdef\SetFigFont#1#2#3#4#5{%
  \reset@font\fontsize{#1}{#2pt}%
  \fontfamily{#3}\fontseries{#4}\fontshape{#5}%
  \selectfont}%
\fi\endgroup%
\begin{picture}(2079,2559)(1902,-5227)
\put(3600,-4100){\makebox(0,0)[lb]{\smash{{\SetFigFont{10}{8}{\rmdefault}{\mddefault}{\updefault}{\color[rgb]{0,0.6,0}$\Lambda+\alpha(b)$}%
}}}}
\put(6280,-3500){\makebox(0,0)[lb]{\smash{{\SetFigFont{10}{8}{\rmdefault}{\mddefault}{\updefault}{\color[rgb]{0,0.6,0}$\gamma$}%
}}}}
\put(3100,-3000){\makebox(0,0)[lb]{\smash{{\SetFigFont{10}{8}{\rmdefault}{\mddefault}{\updefault}{\color[rgb]{0.8,0,0}$\mathbb{R}^{n}\times(0,\infty)\times \{0\}$}%
}}}}
\put(2150,-5200){\makebox(0,0)[lb]{\smash{{\SetFigFont{10}{8}{\rmdefault}{\mddefault}{\updefault}{\color[rgb]{0,0,0}$\mathbb{R}^{n}$}%
}}}}
\put(2200,-3000){\makebox(0,0)[lb]{\smash{{\SetFigFont{10}{8}{\rmdefault}{\mddefault}{\updefault}{\color[rgb]{0,0,0}$x_0$-axis}%
}}}}
\put(4000,-4550){\makebox(0,0)[lb]{\smash{{\SetFigFont{10}{8}{\rmdefault}{\mddefault}{\updefault}{\color[rgb]{0,0,0}$x_{n+1}$-axis}%
}}}}
\end{picture}%
\caption{The image of the map $\cyl_{\beta, \gamma, \Lambda}$ (left) and images of proposed family of ``bent cylinder" embeddings (middle and right)} 
\label{cylinderandbend}
\end{figure} 

\begin{lemma}\label{easybend}
Let $n\geq 3$ and let $\beta:[0,b]\rightarrow [0,\infty)$ be a smooth function satisfying the conditions of \ref{beta0} and \ref{beta}. There exists $\Lambda>0$ so that for all $\gamma\in(0,\frac{\pi}{2}]$, the metric $g_{\beta, \gamma,  \Lambda}^{n+1}$ on $S^{n}\times [0,\gamma]$ has positive scalar curvature. 
\end{lemma}
\begin{proof}
We begin by computing $\bend^{*}(g_{\Euc}^{n+2})$: 
\begin{equation*}\label{metriccalc}
\begin{split}
\bend^{*}(g_{\Euc}^{n+2})&=\bend^{*}(\sum_{i=1}^{n+2}dx_{i}^{2})\\
&=\sum_{i=1}^{n}dx_{i}^{2}+d(x_0\cos{x_{n+1}})^{2}+ d(x_0\sin{x_{n+1}})^{2}\\
&=\sum_{i=1}^{n}dx_{i}^{2}+(\cos{x_{n+1}}^{2}+\sin{x_{n+1}}^{2})dx_{0}^{2}\\
&\hspace{0.4cm}+2x_0\left[ \cos{x_{n+1}}\dot{\cos}{x_{n+1}}+\sin{x_{n+1}}\dot{\sin}{x_{n+1}} \right] dx_{0}dx_{n+1}+x_0^{2}dx_{n+1}^{2}\\
&=\sum_{i=1}^{n}dx_{i}^{2}+dx_{0}^{2}+x_0^{2}dx_{n+1}^{2}.
\end{split}
\end{equation*}
From here we compute $g_{\beta, \gamma, \Lambda}$ as:
\begin{equation}\label{metriccalc2}
\begin{split}
g_{\beta, \gamma, \Lambda}^{n+1}=&\cyl_{\beta,\gamma, \Lambda}^{*}(\bend^{*}(g_{\Euc}^{n+2}))\\
&=\cyl_{\beta,\gamma, \Lambda}^{*}\left[\sum_{i=1}^{n}dx_{i}^{2}+dx_{0}^{2}+x_0^{2}dx_{n+1}^{2}\right]\\
&=\cyl_{\beta, \gamma, \Lambda}^{*}\left[\sum_{i=1}^{n}dx_{i}^{2}\right]+\dot{\alpha}(r)^{2}dr^{2}+(\Lambda+\alpha(r))^{2}dt^{2},
\end{split}
\end{equation}
where $\alpha:[0,b]\rightarrow [0,\infty)$ is the smooth function determined by $\beta$ and satisfying the appropriate conditions of \ref{beta0}.
We will temporarily examine the term:
$$
\cyl_{\beta,\gamma, \Lambda}^{*}\left[\sum_{i=1}^{n}dx_{i}^{2}\right]=\sum_{i=1}^{n}\delta_{ij}(\theta_{i}\dot{\beta}(r)dr+\beta(r).d\theta_i)(\theta_{j}\dot{\beta}(r)dr+\beta(r).d\theta_j).
$$
This term
simplifies as follows.
\begin{equation*}
\begin{split}
&\sum_{i=1}^{n}\delta_{ij}(\theta_{i}\dot{\beta}(r)dr+\beta(r).d\theta_i)(\theta_{j}\dot{\beta}(r)dr+\beta(r).d\theta_j)\\
=&\dot{\beta}(r)^{2}\sum_{i=1}^{n}\theta_{i}^{2}dr^{2}+2\sum_{i=1}^{n}\beta(r)\dot{\beta}(r)\theta_{i}d\theta_idr+\beta(r)^{2}\sum_{i=1}^{n}d\theta_{i}^{2}\\
=&\dot{\beta}(r)^{2}dr^{2}+\beta(r)^{2}\sum_{i=1}^{n}d\theta_{i}^{2}\\
=&\dot{\beta}(r)^{2}dr^{2}+\beta(r)^{2}ds_{n-1}^{2}.
\end{split}
\end{equation*}
The third line above follows from the fact that
$\sum_{i=1}^{n}\theta_{i}^{2}=1$ and so $\sum_{i=1}^{n}\theta_{i}d\theta_{i}=0$.
%
The fourth line then follows from the fact that
$\sum_{i=1}^{n}d\theta_{i}^{2}$ restricted to $S^{n-1}$ is precisely
the standard round metric of radius $1$. Returning to
(\ref{metriccalc2}), we now have that:
\begin{equation*}
\begin{split}
g_{\beta, \gamma, \Lambda}^{n+1}:=\cyl_{\beta, \gamma, \Lambda}^{*}(\bend_{\gamma}^{*}(g_{\Euc}^{n+2}))&=\dot{\beta}(r)^{2}dr^{2}+\beta(r)^{2}ds_{n-1}^{2}+\dot{\alpha}(r)^{2}dr^{2}+(\Lambda+\alpha(r))^{2}dt^{2}\\
&=[\dot{\beta}(r)^{2}+\dot{\alpha}(r)^{2}]dr^{2}+\beta(r)^{2}ds_{n-1}^{2}+(\Lambda+\alpha(r))^{2}dt^{2}\\
&=dr^{2}+\beta(r)^{2}ds_{n-1}^{2}+(\Lambda+\alpha(r))^{2}dt^{2}.
\end{split}
\end{equation*}
The scalar curvature of this metric, denoted $s_{\beta, \gamma, \Lambda}$, is given by the formula:
\begin{equation*}
s_{\beta, \gamma, \Lambda}= {(n-1)(n-2)}\Big{[}\frac{1-{\dot{\beta}}^{2}}{\beta^{2}}\Big{]}-{2(n-1)}\frac{\ddot{\beta}}{\beta}-\frac{2(n+1)}{\beta}\Big{[}\frac{\dot{\alpha}.\dot{\beta}}{\Lambda+\alpha}\Big{]}- 2\frac{\ddot{\alpha}}{\Lambda+\alpha}.
\end{equation*}
The conditions imposed on $\alpha$ and $\beta$ mean the first, second and fourth terms in this expression are all non-negative and indeed always sum to something positive. By choosing sufficiently large $\Lambda$, we can minimise any negativity arising from the third term, proving the lemma.
\end{proof}

We are now ready to construct the desired metric bending. We assume that $\beta:[0,b]\rightarrow[0,\infty)$ is as in Lemma \ref{easybend} with corresponding smooth function $\alpha$. Let $$w_{\Lambda, \gamma}:[0,b]\times[-3, \gamma+3]\longrightarrow [1, \infty)$$ denote a family of smooth functions where $\gamma\in[0,\frac{\pi}{2}]$ is a constant and which satisfy the following properties:
\begin{enumerate}
\item[(i.)] When $\gamma=0$, $w_{\Lambda, 0}(r,t)=1$ for all $(r,t)\in [0,b]\times[-3, \gamma+3]$.
\item[(ii.)] There is a constant $\gamma_0\in(0, \frac{\pi}{2})$ so that whenever $\gamma\in[\gamma_0, \frac{\pi}{2}]$:
$$
w_{\Lambda, \gamma}(r,t) =
  \begin{cases}		1 & \text{if $-3\leq t\leq -2$,} \\
                                \Lambda  & \text{if $-\frac{3}{2}\leq t\leq -\frac{1}{2}$,} \\
                                 \Lambda+\alpha(r) & \text{if $t\in[0,\gamma]$,} \\
   \Lambda & \text{if $\gamma+\frac{1}{2} \leq t\leq \gamma+\frac{3}{2}$,}\\
   1 & \text{if $\gamma+2\leq t\leq \gamma+3$.} 
  \end{cases}
$$
\item[(iii.)]  $\frac{\partial{w_{\Lambda, \gamma}}}{\partial r}(r,t)=0$ when $t\in[-3,-\frac{1}{2}]\cup [\gamma+\frac{1}{2}, 3]$. 
\item[(iv.)] $\left|\frac{\partial^{(k)}{w_{\Lambda, \gamma}}}{\partial r^{(k)}}(r,t)\right|\leq \left| \frac{\partial^{(k)}{\alpha}}{\partial r^{(k)}}(r)\right|$ for all $k\in\{0,1,2,\cdots\}$.
\item[(v.)] $\frac{\partial^{2}{w_{\Lambda, \gamma}}}{\partial r^{2}}\leq 0$.
\end{enumerate}
To aid the reader, we provide in Fig. \ref{hardbendadjust} a schematic description of such a function, in the case when $\gamma>\gamma_0$, on its rectangular domain $[0,b]\times[-3, \gamma+3]$. The interval $[0,b]$ is represented on the vertical here. Finally, we specify the family of metrics $\{ g_{\gamma-\bend}^{n+1}(\beta, \Lambda): \gamma\in[0,\frac{\pi}{2}]\}$ defined by the formula:
$$
g_{\gamma-\bend}^{n+1}(\beta, \Lambda):=dr^{2}+w_{\theta}(r,t)^{2}dt^{2}+\eta_{\delta}(r)^{2}ds_{n-1}^{2},
$$
where $r\in [0,b]$, $t\in [-3, \gamma+3]$ and $\gamma\in[0,\frac{\pi}{2}]$.
\begin{figure}[!htbp]
\vspace{-1.5cm}
\hspace{-9cm}
\begin{picture}(0,0)%
\includegraphics{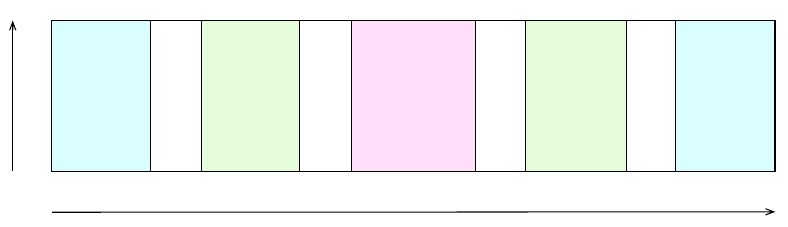}%
\end{picture}%
\setlength{\unitlength}{3947sp}%
\begingroup\makeatletter\ifx\SetFigFont\undefined%
\gdef\SetFigFont#1#2#3#4#5{%
  \reset@font\fontsize{#1}{#2pt}%
  \fontfamily{#3}\fontseries{#4}\fontshape{#5}%
  \selectfont}%
\fi\endgroup%
\begin{picture}(2079,2559)(1902,-5227)
\put(2700,-4200){\makebox(0,0)[lb]{\smash{{\SetFigFont{10}{8}{\rmdefault}{\mddefault}{\updefault}{\color[rgb]{0,0,0}$1$}%
}}}}

\put(3900,-4200){\makebox(0,0)[lb]{\smash{{\SetFigFont{10}{8}{\rmdefault}{\mddefault}{\updefault}{\color[rgb]{0,0,0}$\Lambda$}%
}}}}
\put(4900,-4200){\makebox(0,0)[lb]{\smash{{\SetFigFont{10}{8}{\rmdefault}{\mddefault}{\updefault}{\color[rgb]{0,0,0}$\Lambda+\alpha(r)$}%
}}}}
\put(6500,-4200){\makebox(0,0)[lb]{\smash{{\SetFigFont{10}{8}{\rmdefault}{\mddefault}{\updefault}{\color[rgb]{0,0,0}$\Lambda$}%
}}}}
\put(7700,-4200){\makebox(0,0)[lb]{\smash{{\SetFigFont{10}{8}{\rmdefault}{\mddefault}{\updefault}{\color[rgb]{0,0,0}$1$}%
}}}}
\put(2200,-4950){\makebox(0,0)[lb]{\smash{{\SetFigFont{10}{8}{\rmdefault}{\mddefault}{\updefault}{\color[rgb]{0,0,0}$-3$}%
}}}}
\put(2900,-4950){\makebox(0,0)[lb]{\smash{{\SetFigFont{10}{8}{\rmdefault}{\mddefault}{\updefault}{\color[rgb]{0,0,0}$-2$}%
}}}}
\put(3400,-4950){\makebox(0,0)[lb]{\smash{{\SetFigFont{10}{8}{\rmdefault}{\mddefault}{\updefault}{\color[rgb]{0,0,0}$-\frac{3}{2}$}%
}}}}
\put(4100,-4950){\makebox(0,0)[lb]{\smash{{\SetFigFont{10}{8}{\rmdefault}{\mddefault}{\updefault}{\color[rgb]{0,0,0}$-\frac{1}{2}$}%
}}}}
\put(4700,-4950){\makebox(0,0)[lb]{\smash{{\SetFigFont{10}{8}{\rmdefault}{\mddefault}{\updefault}{\color[rgb]{0,0,0}$0$}%
}}}}
\put(5650,-4950){\makebox(0,0)[lb]{\smash{{\SetFigFont{10}{8}{\rmdefault}{\mddefault}{\updefault}{\color[rgb]{0,0,0}$\gamma$}%
}}}}
\put(6000,-4950){\makebox(0,0)[lb]{\smash{{\SetFigFont{10}{8}{\rmdefault}{\mddefault}{\updefault}{\color[rgb]{0,0,0}$\gamma+\frac{1}{2}$}%
}}}}
\put(6700,-4950){\makebox(0,0)[lb]{\smash{{\SetFigFont{10}{8}{\rmdefault}{\mddefault}{\updefault}{\color[rgb]{0,0,0}$\gamma+\frac{3}{2}$}%
}}}}
\put(7200,-4950){\makebox(0,0)[lb]{\smash{{\SetFigFont{10}{8}{\rmdefault}{\mddefault}{\updefault}{\color[rgb]{0,0,0}$\gamma+2$}%
}}}}
\put(7950,-4950){\makebox(0,0)[lb]{\smash{{\SetFigFont{10}{8}{\rmdefault}{\mddefault}{\updefault}{\color[rgb]{0,0,0}$\gamma+3$}%
}}}}
\put(4800,-5270){\makebox(0,0)[lb]{\smash{{\SetFigFont{10}{8}{\rmdefault}{\mddefault}{\updefault}{\color[rgb]{0,0,0}$t$}%
}}}}
\put(2200,-4750){\makebox(0,0)[lb]{\smash{{\SetFigFont{10}{8}{\rmdefault}{\mddefault}{\updefault}{\color[rgb]{0,0,0}$0$}%
}}}}
\put(2200,-3650){\makebox(0,0)[lb]{\smash{{\SetFigFont{10}{8}{\rmdefault}{\mddefault}{\updefault}{\color[rgb]{0,0,0}$b$}%
}}}}
\put(1900,-4150){\makebox(0,0)[lb]{\smash{{\SetFigFont{10}{8}{\rmdefault}{\mddefault}{\updefault}{\color[rgb]{0,0,0}$r$}%
}}}}
\end{picture}%
\caption{The function $\omega_{\gamma, \Lambda}$} 
\label{hardbendadjust}
\end{figure} 
\begin{lemma}\label{bendisot}
Let $n\geq 3$ and let $\beta:[0,b]\rightarrow[0,\infty)$ be as in Lemma \ref{easybend}. Then there exists $\Lambda_{\beta}>0$ so that for all $\Lambda\geq \Lambda_{\beta}$ the map $\gamma\mapsto g_{\gamma-\bend}^{n+1}(\beta, \Lambda)$, where $\gamma\in[0,\frac{\pi}{2}]$, is an isotopy through metrics of positive scalar curvature. 
\end{lemma}

\begin{proof}

The scalar curvature of the metric $g_{\gamma-\bend}^{n+1}(\beta, \Lambda)$ is given by the formula:
\begin{equation*}
s_{\gamma-\bend, \beta, \Lambda}= {(n-1)(n-2)}\Big{[}\frac{1-{\dot{\beta}}^{2}}{\beta^{2}}\Big{]}-{2(n-1)}\frac{\ddot{\beta}}{\beta}-\frac{2(n+1)}{\beta}\Big{[}\frac{\frac{\partial{\omega_{\gamma, \Lambda}}}{\partial r}.\dot{\beta}}{\omega_{\gamma, \Lambda}}\Big{]}- 2\frac{\ddot{\alpha}}{\omega_{\gamma, \Lambda}}.
\end{equation*}
As with the analogous calculation in the proof of Lemma \ref{easybend}, the sum of the first, second and fourth terms in this expression is universally positive. Moreover the third term is zero when $\gamma=0$. This allows us to specify a small angle $\gamma_0>0$ over which to make an initial bend. More precisely, there is a $\gamma_0>0$ for which this metric has positive scalar curvature for all $\gamma\in[0,\gamma_0]$ and all $\Lambda\geq 0$. We now consider the case where $\gamma\in[\gamma_0, \frac{\pi}{2}]$. We note that $\frac{\partial{\omega_{\gamma, \Lambda}}}{\partial r}(r,t)=0$ unless $t\in[-\frac{1}{2}, \gamma+\frac{1}{2}]$. In this case the denominator $\omega_{\gamma, \Lambda}\geq \Lambda$. Hence, by choosing sufficiently large $\Lambda_{\beta}>0$, we can minimise the effect of the third term and maintain positivity throughout for all $\Lambda\geq \Lambda_{\beta}$.
\end{proof}

We will be particularly interested in the case when the smooth function $\beta$ above is the torpedo function $\eta_{\delta, \lambda_1}:[0, \delta\frac{\pi}{2}]\rightarrow[0,\infty)$ for some $\delta>0$. Noting that the choice of $\lambda_1$ in this function has no effect on curvature in the bending construction above, the following corollary is just a special case of Lemma \ref{bendisot}.
\begin{corollary}\label{bendbound} Let $n\geq 3$. Given $\delta>0$, there is a constant $\Lambda_{\delta}>0$ so that for all $\Lambda\geq \Lambda_{\delta}$ and all $\gamma\in[0,\frac{\pi}{2}]$, the metric $g_{\gamma-\bend}^{n+1}(\eta_{\delta, \lambda_1}, \Lambda)$ has positive scalar curvature. 
\end{corollary}

We are finally in a position to define the ``boot" metric. We will build this by attaching together various parts as shown below in Fig. \ref{bootdetail2}. We begin with the so-called toe of the boot. This is the previously defined metric, $\hat{g}_{\tor}^{n+1}(\delta)_{\lambda_1, \lambda_2}$, defined on the region $D_{\stret}^{n+1}$. We will denote this first region $R_1$. Recall that the $\lambda_1$ parameter denotes the length of the vertical straight part of the toe while $\lambda_2$ denotes the length of the horizontal part. 
We will for now make no restrictions on the size of $\delta$. Returning to the bending construction above, we replace $\beta$ by the smooth torpedo function $\eta_{\delta, \lambda_1}:[0, \delta\frac{\pi}{2}]\rightarrow[0,\infty)$ for some $\delta>0$. 
Recall that this metric is the result of bending a cylinder metric of the form $g_{\tor}^{n}(\delta)_{\lambda_1}+dt^{2}$ around an angle of $\frac{\pi}{2}$. The neck length $\lambda_1$ coincides with the straight height of the toe metric on $R_1$. Recall that the metric $g_{\frac{\pi}{2}-\bend}^{n+1}(\eta_{\delta, \lambda_1}, \Lambda)$ is defined on $D^{n}\times [0, \frac{\pi}{2}+3]$ and so we regard this space as the second region, $R_2$.


We then take another cylinder $D^{n}\times [0,\lambda_3]$ for some arbitrary $\lambda_3\geq 1$, which we denote $R_3$, and equip it with the product metric $g_{\tor}^{n}(\delta)_{\lambda_1}+dt^{2}$.
At this stage, we can smoothly glue together $(R_1, \hat{g}_{\tor}^{n+1}(\delta)_{\lambda_1, \lambda_2})$ and $(R_2, g_{\frac{\pi}{2}-\bend}(\eta_{\delta, \lambda_1, \Lambda})$ by attaching along $D^{n}\times\{0\}\subset R_2$ in the obvious way. Similarly, we can attach $(R_3, g_{\tor}^{n}(\delta)+dt^{2})$ to $(R_2, g_{\frac{\pi}{2}-\bend}(\eta_{\delta, \lambda_1, \Lambda})$, this time gluing along  $D^{n}\times\{1\}\subset R_2$.   
Finally, we wish to attach a ``corner" region, $R_4$, which extends the boundary of $R_2$ so that the union $R_1\cup R_2\cup R_3\cup R_4$ is diffeomorphic to the cylinder $D^{n}\times I$. This region is a product of $S^{n-1}$ with a piece of $2$-dimensional Euclidean space, which takes the form shown in Fig. \ref{bootdetail2} below. The metrics on each of these regions glues together in the obvious way to form a smooth metric: 
$$\hat{g}_{\tor}^{n+1}(\delta)_{\lambda_1, \lambda_1'}\cup g_{\frac{\pi}{2}-\bend}(\eta_{\delta, \lambda_1, \Lambda})\cup (g_{\tor}^{n}(\delta)_{\lambda_1}+dt^{2})\cup (\delta^{2}ds_{n-1}^{2}+dr^{2}+dt^{2}),$$
as suggested in Fig. \ref{bootdetail2}. Such a metric is known as a {\em $\delta$-boot} metric. 

\begin{figure}[!htbp]
\vspace{4.5cm}
\hspace{-3cm}
\begin{picture}(0,0)
\includegraphics{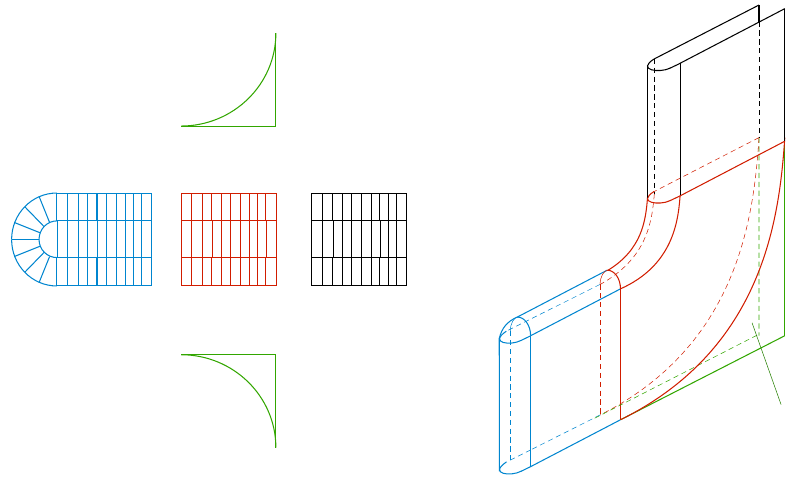}%
\end{picture}
\setlength{\unitlength}{3947sp}
\begingroup\makeatletter\ifx\SetFigFont\undefined%
\gdef\SetFigFont#1#2#3#4#5{%
  \reset@font\fontsize{#1}{#2pt}%
  \fontfamily{#3}\fontseries{#4}\fontshape{#5}%
  \selectfont}%
\fi\endgroup%
\begin{picture}(5079,1559)(1902,-7227)
\put(2000,-5800){\makebox(0,0)[lb]{\smash{{\SetFigFont{10}{8}{\rmdefault}{\mddefault}{\updefault}{\color[rgb]{0,0.6,0.8}$R_1=D_{\stret}^{n+1}$}%
}}}}
\put(3150,-4800){\makebox(0,0)[lb]{\smash{{\SetFigFont{10}{8}{\rmdefault}{\mddefault}{\updefault}{\color[rgb]{0.8,0,0}$R_2\cong D^{n}\times I$}%
}}}}
\put(4200,-5800){\makebox(0,0)[lb]{\smash{{\SetFigFont{10}{8}{\rmdefault}{\mddefault}{\updefault}{\color[rgb]{0,0,0}$R_3\cong D^{n}\times I$}%
}}}}
\put(2650,-6850){\makebox(0,0)[lb]{\smash{{\SetFigFont{10}{8}{\rmdefault}{\mddefault}{\updefault}{\color[rgb]{0,0.6,0}$R_4\cong S^{n-1}\times D^{2}$}
}}}}
\put(5150,-6850){\makebox(0,0)[lb]{\smash{{\SetFigFont{10}{8}{\rmdefault}{\mddefault}{\updefault}{\color[rgb]{0,0.6,0.8}$\hat{g}_{\tor}^{n+1}(\delta)$}
}}}}
\put(6000,-5300){\makebox(0,0)[lb]{\smash{{\SetFigFont{10}{8}{\rmdefault}{\mddefault}{\updefault}{\color[rgb]{0.8,0,0}$g_{\bend\tor}^{n}(\delta)$}%
}}}}
\put(5900,-4300){\makebox(0,0)[lb]{\smash{{\SetFigFont{10}{8}{\rmdefault}{\mddefault}{\updefault}{\color[rgb]{0,0,0}$g_{\tor}^{n}(\delta)+dt^{2}$}%
}}}}
\put(7350,-6650){\makebox(0,0)[lb]{\smash{{\SetFigFont{10}{8}{\rmdefault}{\mddefault}{\updefault}{\color[rgb]{0,0.6,0}$\delta^{2}ds_{n-1}^{2}+dr^{2}+dt^{2}$}
}}}}
\end{picture}%
\caption{The various components of the boot metric}
\label{bootdetail2}
\end{figure}

Obviously, there are various parameter choices involved here such as the lengths: $\lambda_1, \lambda_2$ and $\lambda_3$. Indeed, there are four straight edge pieces each of which has a length which can be adjusted. To make things more precise, we describe these lengths with a vector $\bar{l}=(l_1, l_2, l_3, l_4)\in [0,\infty)^{4}$. The first component, $l_1=\lambda_1$ denotes the height of the straight part of the toe, as above. Then, moving anti-clockwise around the boot the other components respectively describe the lengths of the various pieces, as shown in Fig. \ref{bootdetail21}. The corresponding metric is denoted: $g_{\boot}^{n+1}(\delta)_{\Lambda, \bar{l}}$. It is convenient to regard the union $R_1\cup R_2\cup R_3\cup R_4$ on which this metric lies as $D^{n}\times [0,l_3]$ where $l_3$ is the height of the boot. 
Note that $g_{\boot}^{n+1}(\delta)_{\Lambda, \bar{l}}$ has product structures:
$$g_{\tor}^{n}(\delta)_{l_4}+dt^{2} \text{ when $t$ is near $l_3$}   \quad \text{ and } \quad g_{\tor}^{n}(\delta)_{l_2}+dt^{2} \text{ when $t$ is near $0$.} $$

\begin{figure}[!htbp]
\vspace{5.5cm}
\hspace{3cm}
\begin{picture}(0,0)
\includegraphics{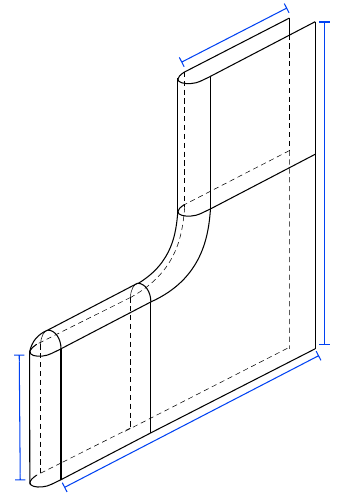}%
\end{picture}
\setlength{\unitlength}{3947sp}
\begingroup\makeatletter\ifx\SetFigFont\undefined%
\gdef\SetFigFont#1#2#3#4#5{%
  \reset@font\fontsize{#1}{#2pt}%
  \fontfamily{#3}\fontseries{#4}\fontshape{#5}%
  \selectfont}%
\fi\endgroup%
\begin{picture}(5079,1559)(1902,-7227)
\put(1800,-6650){\makebox(0,0)[lb]{\smash{{\SetFigFont{10}{8}{\rmdefault}{\mddefault}{\updefault}{\color[rgb]{0,0,0.9}$l_1$}%
}}}}
\put(3600,-3500){\makebox(0,0)[lb]{\smash{{\SetFigFont{10}{8}{\rmdefault}{\mddefault}{\updefault}{\color[rgb]{0,0,1}$l_4$}%
}}}}
\put(4500,-4800){\makebox(0,0)[lb]{\smash{{\SetFigFont{10}{8}{\rmdefault}{\mddefault}{\updefault}{\color[rgb]{0,0,1}$l_3$}%
}}}}
\put(3400,-6800){\makebox(0,0)[lb]{\smash{{\SetFigFont{10}{8}{\rmdefault}{\mddefault}{\updefault}{\color[rgb]{0,0,1}$l_2$}
}}}}
\end{picture}%
\caption{The boot metric $g_{\boot}^{n+1}(\delta)_{\Lambda, \bar{l}}$ with $\bar{l}=(l_1, l_2, l_3, l_4)$}
\label{bootdetail21}
\end{figure}

When it comes to maintaining positive scalar curvature, there are of course constraints on the choices of $\Lambda$ and $\bar{l}$. From Corollary \ref{bendbound}, we know that for each $\delta>0$ there is a constant $\Lambda_\delta>0$ so that positive scalar curvature on the region $R_2$ is preserved provided the bending parameter $\Lambda$ satisfies $\Lambda\geq \Lambda_{\delta}$. The components $l_1$ and $l_4$ can be any non-zero constants. However the base length, $l_2$, and height of the boot, $l_3$, are constrained below by some positive constants determined by the bending parameter $\Lambda\geq \Lambda_{\delta}$) as well as $l_1$ and $l_4$. With this in mind, we specify the space $C_{\boot,+}^{n+1}\subset(0,\infty)\times[0,\infty)\times[0,\infty)^{4}$ which consists of all triples $(\delta, \Lambda, \bar{l})$ so that the $L\geq L_{\delta}$ and $\bar{l}$ has components such that the metric
$g_{\boot}^{n+1}(\delta)_{\Lambda, \bar{l}}$ has positive scalar curvature. Elements of $C_{\boot,+}^{n+1}$ are deemed to be {\em psc-boot triples}.

Let us return to the standard cylinder $g_{\tor}^{n}(\delta)_{\lambda}+dt^{2}$ on $D^{n}\times I$. Lemma \ref{bendisot} allow us to construct an isotopy through positive scalar curvature metrics on $D^{n}\times I$ which turns the above cylinder metric into a boot metric. This is described in the following Lemma. Before stating it, we recall from the end of section \ref{isoconcfam}, the family of diffeomorphisms $\xi_{L}:[0,1]\rightarrow [0, L]$. Throughout this section, we will make use of this family to identify metrics on an arbitrary cylinder $D^{n}\times [0,L]$ with metrics on the standard cylinder $D^{n}\times I$ via the obvious pullback. Thus, we will often refer to metrics on $D^{n}\times I$ as metrics on $D^{n}\times L$ for some $L$ with the understanding that these metrics are related via this identification.

\begin{lemma}\label{bootisot}
 Let $n\geq 3$ and let $(\delta, \Lambda, \bar{l})\in C_{\boot,+}^{n+1}$ be a psc-boot triple. For any $L>0$, let $g_0$ be a psc-metric on $D^{n}\times I$ which takes the form of the cylinder metric $g_{\tor}^{n}(\delta)_{\lambda}+dt^{2}$ on $D^{n}\times [0,L]$. 
 There is an isotopy of psc-metrics on $D^{n}\times I$, $\tau\mapsto g_{\boot}^{n+1}(\delta)_{\Lambda, \bar{l}}(\tau)$, $\tau\in I$ satisfying: 
 \begin{enumerate}
\item[(i.)] $g_{\boot}^{n+1}(\delta)_{\Lambda, \bar{l}}(0)=g_0,$
\item[(ii.)] $g_{\boot}^{n+1}(\delta)_{\Lambda, \bar{l}}(1)=g_{\boot}^{n+1}(\delta)_{\Lambda, \bar{l}}$.
\item[(iii.)] For all $\tau\in [0,1]$, 
$$
g_{\boot}^{n+1}(\delta)_{\Lambda, \bar{l}}(\tau) =
  \begin{cases}
   g_{\tor}^{n}(\delta)_{\lambda_2(\tau)}+dt^{2}& \text{ for some $\lambda_2(\tau)\in[\lambda, l_2]$ when $t$ is near $0$,} \\
   g_{\tor}^{n}(\delta)_{\lambda_4(\tau)}+dt^{2} & \text{ for some $\lambda_4(\tau)\in[\lambda, l_4]$ when $t$ is near $1$.}                                                                                 
  \end{cases}
$$
\end{enumerate}
\end{lemma}
\begin{proof}
This is an elementary application of Lemma \ref{bendisot}. Recall that the vector $\bar{l}=(l_1, l_2, l_3, l_4)$ denotes lengths of the various straight-edge pieces of the desired boot metric. In particular, $l_1$ corresponds to the straight height of the toe, $l_2$ the base, $l_3$ the total boot height and $l_4$ the straight length at the top. We begin by continuously stretching stretching the cylinder metric so that it takes the form $g_{\tor}^{n}(\delta)_{\lambda}+dt^{2}$ on the cylinder $X\times [0, l_3]$. We denote by $\lambda_{i}(\tau)$, the stretching parameter which varies linearly between $\lambda_i(0)$ and $\lambda_{i}(1)=l_i$ for each $i\in\{1,2,3,4\}$. Thus, $\lambda_3(0)=1$ and $\lambda_{3}(1)=l_3$, while $\lambda_1(\tau)\in[0,\lambda_3(\tau)]$ and $\lambda_{1}(1)=l_1$.
The parameters $\lambda_2(\tau)$ and $\lambda_{4}(\tau)$ satisfy $\lambda\leq \lambda_{4}(\tau)\leq\lambda_{2}\leq l_2$ and $\lambda\leq \lambda_{4}(\tau)\leq l_2$ for all $\tau\in[0,1]$.

By pushing out two bends (controlled by angle $\gamma(\tau)\in[0,\frac{\pi}{2}]$ where $\tau\in I$) as depicted in Fig. \ref{bootdetail} below, we can push out a ``toe" from the original cylinder metric and obtain the desired ``boot". The lower bend, which begins at $t=l_1$ (with respect to the $[0,l_3]$ coordinate interval), is in a direction conducive to preserving positive scalar curvature and so we do not make use of Lemma \ref{bendisot} here. Indeed details of such bends are taken care of in Lemma 2.1 of \cite{Walsh2}. The upper bend is more delicate and does require the gradual bending over a wide bending arc as in Lemma \ref{bendisot}. However, as $\Lambda$ is part of a psc-boot triple, Corollary \ref{bendbound} guarantees that this bending preserves positive scalar curvature. Finally, the remaining parameters of $\bar{l}$, $l_2$ and $l_4$ can be achieved by obvious stretching. 
\end{proof}
\begin{figure}[!htbp]
\vspace{4.5cm}
\hspace{-2.5cm}
\begin{picture}(0,0)
\includegraphics{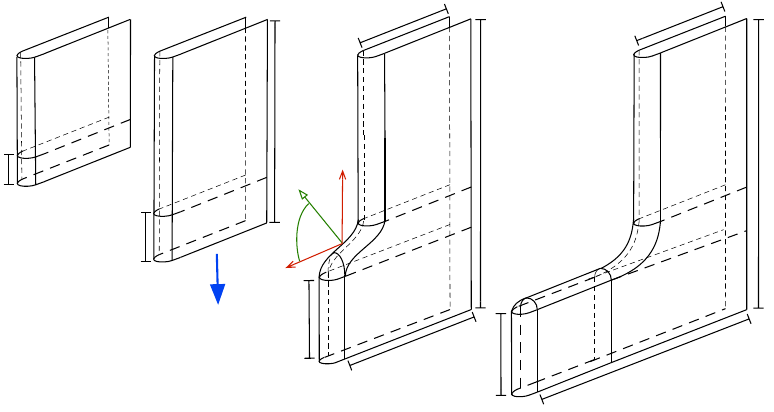}%
\end{picture}
\setlength{\unitlength}{3947sp}
\begingroup\makeatletter\ifx\SetFigFont\undefined%
\gdef\SetFigFont#1#2#3#4#5{%
  \reset@font\fontsize{#1}{#2pt}%
  \fontfamily{#3}\fontseries{#4}\fontshape{#5}%
  \selectfont}%
\fi\endgroup%
\begin{picture}(5079,1559)(1902,-7227)
\put(1500,-5360){\makebox(0,0)[lb]{\smash{{\SetFigFont{10}{8}{\rmdefault}{\mddefault}{\updefault}{\color[rgb]{0,0,0}$\lambda_{1}(0)$}%
}}}}

\put(1850,-5600){\makebox(0,0)[lb]{\smash{{\SetFigFont{10}{8}{\rmdefault}{\mddefault}{\updefault}{\color[rgb]{0,0,0}$g_{\tor}^{n}(\delta)_{\lambda}+dt^{2}$}%
}}}}
\put(2600,-5900){\makebox(0,0)[lb]{\smash{{\SetFigFont{10}{8}{\rmdefault}{\mddefault}{\updefault}{\color[rgb]{0,0,0}$\lambda_{1}(\tau)$}%
}}}}

\put(4050,-5000){\makebox(0,0)[lb]{\smash{{\SetFigFont{10}{8}{\rmdefault}{\mddefault}{\updefault}{\color[rgb]{0,0,0}$\lambda_3(\tau)$}%
}}}}
\put(4230,-5850){\makebox(0,0)[lb]{\smash{{\SetFigFont{10}{8}{\rmdefault}{\mddefault}{\updefault}{\color[rgb]{0,0.6,0}{\tiny ${\gamma(\tau)}$}}%
}}}}
\put(6150,-5000){\makebox(0,0)[lb]{\smash{{\SetFigFont{10}{8}{\rmdefault}{\mddefault}{\updefault}{\color[rgb]{0,0,0}$g_{\boot}^{n+1}(\delta)_{\Lambda, \bar{l}}$}%
}}}}
\put(5750,-5300){\makebox(0,0)[lb]{\smash{{\SetFigFont{10}{8}{\rmdefault}{\mddefault}{\updefault}{\color[rgb]{0,0,0}$l_3$}%
}}}}
\put(5040,-6850){\makebox(0,0)[lb]{\smash{{\SetFigFont{10}{8}{\rmdefault}{\mddefault}{\updefault}{\color[rgb]{0,0,0}$\lambda_2(\tau)$}
}}}}
\put(4700,-4050){\makebox(0,0)[lb]{\smash{{\SetFigFont{10}{8}{\rmdefault}{\mddefault}{\updefault}{\color[rgb]{0,0,0}$\lambda_4(\tau)$}
}}}}
\put(4150,-6500){\makebox(0,0)[lb]{\smash{{\SetFigFont{10}{8}{\rmdefault}{\mddefault}{\updefault}{\color[rgb]{0,0,0}$l_1$}
}}}}

\put(5700,-6800){\makebox(0,0)[lb]{\smash{{\SetFigFont{10}{8}{\rmdefault}{\mddefault}{\updefault}{\color[rgb]{0,0,0}$l_1$}
}}}}
\put(7050,-6930){\makebox(0,0)[lb]{\smash{{\SetFigFont{10}{8}{\rmdefault}{\mddefault}{\updefault}{\color[rgb]{0,0,0}$l_2$}
}}}}
\put(7100,-4050){\makebox(0,0)[lb]{\smash{{\SetFigFont{10}{8}{\rmdefault}{\mddefault}{\updefault}{\color[rgb]{0,0,0}$l_4$}%
}}}}
\put(8000,-5300){\makebox(0,0)[lb]{\smash{{\SetFigFont{10}{8}{\rmdefault}{\mddefault}{\updefault}{\color[rgb]{0,0,0}$l_3$}
}}}}
\end{picture}%
\caption{The isotopy between the torpedo cylinder $g_{\tor}^{n}(\delta)_{\lambda}+dt^{2}$ (far left) and the boot $g_{\boot}^{n+1}(\delta)_{\Lambda, \bar{l}}$ (far right) with the middle two images depicting various intermediary metrics $g_{\boot}^{n+1}(\delta)_{\Lambda, \bar{l}}(\tau)$ for some $\tau\in(0,1)$. }
\label{bootdetail}
\end{figure}

There is a certain space of psc-metrics that is worth specifying at this point. For a given $\delta>0$ and a psc-boot triple $(\delta, \Lambda, \bar{l})\in C_{\boot}^{n+1}$, consider the family of metrics $g_{\boot}^{n+1}(\delta)_{\Lambda, \bar{l}}(\tau)$, $\tau\in I$ as described in Lemma \ref{bootisot} above. We would like to describe a space consisting of the union of all such families for a fixed $\delta$. More, precisely we define:
$$\Riem_{\delta-\boot}^{+}(D^{n}\times I):=\{g\in\Riem^{+}(D^{n}\times I): g=g_{\boot}^{n+1}(\delta)_{\Lambda, \bar{l}}(\tau), \tau\in[0,1],(\delta, \Lambda, \bar{l})\in C_{\boot}^{n+1}\}. $$
The following proposition follows easily from Lemma \ref{bootisot}.
\begin{proposition}
Let $n\geq 3$. The space $\Riem_{\delta-\boot}^{+}(D^{n}\times I)$ is contractible.
\end{proposition}
\begin{proof}

From Lemma \ref{bootisot}, we can perform a deformation retract of $\Riem_{\delta-\boot}^{+}(D^{n}\times I)$ to the subspace consisting of all metrics which take the form of a cylinder $g_{\tor}^{n}(\delta)_\lambda+dt^{2}$ on $D^{n}\times [0,L]$ for some $\lambda, L>0$. An easy further deformation retract, continuously scaling both $\lambda$ and $L$ to 1 completes the proof.
\end{proof}

\subsection{Step metrics}\label{steps}
Before concluding this section, there is a class of metrics we wish to define which generalises the boot metrics discussed above and which will be of immense use later on. In the proof of Theorem A, we will encounter a problem of the sort encountered in the proof of Theorem \ref{Chernysh}, where a deformation we carry out to standardise certain metrics temporarily moves already standard metrics out of the standard space. In Theorem \ref{Chernysh}, we overcome this problem by defining a suitable intermediary space of almost standard metrics. In proving Theorem A, we will have to do something similar. The reader should view the space of ``step metrics", which we define below, as the principle tool in constructing an analogue of the space of ``almost standard metrics." This will become clearer when it comes to proving Theorem A.

Suppose we begin with a cylinder metric $g_{\tor}^{n-1}(\delta)_{\lambda}+dt^{2}$ on $D^{n}\times I$. Given a psc-boot triple $(\delta, \Lambda, \bar{l})$, Lemma \ref{bootisot} gives us a family of psc-metrics $g_{\boot}^{n+1}(\delta)_{\Lambda, \bar{l}}(\tau), \tau\in[0,1]$. Denoting this psc-boot triple $(\delta, \Lambda^{1}, \bar{l}^{1})$, we choose some $\tau^{1}\in[0,1]$ and consider the metric: $g_{\boot}^{n+1}(\delta)_{\Lambda^{1}, \bar{l}^{1}}(\tau^{1})$. We denote the straight edge length parameters by $\lambda_{i}^{1}$, where $i\in\{1,2,3,4\}$. Recall that making use of the identification, $\xi_{\lambda_3^{1}(\tau^{1})}:[0,1]\rightarrow [0, \lambda_3^{1}(\tau^{1})]$ described at the end of section \ref{isoconcfam}, allows us to regard this as a metric on $D^{n}\times [0, \lambda_3^{1}(\tau^{1})]$. This metric takes the form of a product $g_{\tor}^{n}(\delta)_{\lambda_{2}^{1}(\tau^{1})}$ on the sub-cylinder $D^{n}\times [0, \lambda_1^{1}(\tau^{1})]\subset D^{n}\times [0, \lambda_3^{1}(\tau^{1})]$. Restricting attention to the sub-cylinder metric $(D^{n}\times [0, \lambda_1^{1}(\tau^{1})], g_{\tor}^{n-1}(\delta)_{\lambda_{2}^{1}(\tau^{1})}+dt^{2})$ we choose another psc-boot triple $(\delta, \Lambda^{2}, \bar{l}^{2})$ and reapply the isotopy from Lemma \ref{bootisot} up to some stage $\tau^{2}\in I$. In this case the straight edge length parameters will be denoted $\lambda_i^{2}$ and we will insist that $\lambda_{4}^{2}(\tau^{2})=\lambda_{2}^{1}(\tau)$ for all $\tau^{2}\in I$. Thus, as we adjust the original metric below $t=\lambda_1^{1}(\tau^{1})$, we guarantee smooth transition to the unadjusted region above. We denote the resulting metric: $ g_{\boot}^{n+1}(\delta)_{(\Lambda^{1}, \Lambda^{2}), (\bar{l}^{1}, \bar{l}^{2})}(\tau^{1}, \tau^{2})$, depicted in Fig. \ref{bootdetail3} below.

\begin{figure}[!htbp]
\vspace{5.5cm}
\hspace{0cm}
\begin{picture}(0,0)
\includegraphics{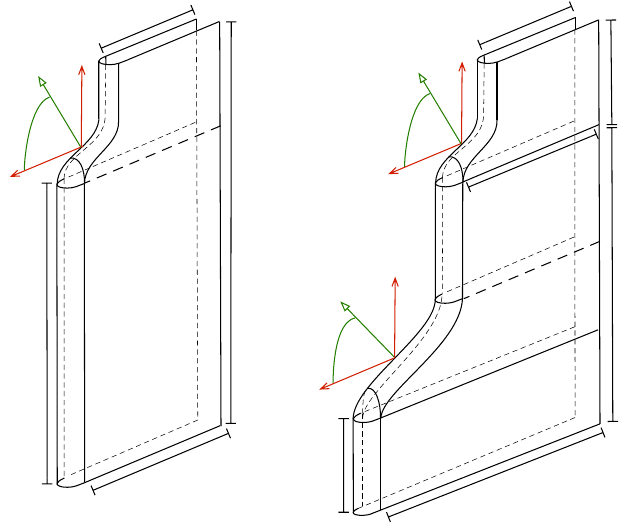}%
\end{picture}
\setlength{\unitlength}{3947sp}
\begingroup\makeatletter\ifx\SetFigFont\undefined%
\gdef\SetFigFont#1#2#3#4#5{%
  \reset@font\fontsize{#1}{#2pt}%
  \fontfamily{#3}\fontseries{#4}\fontshape{#5}%
  \selectfont}%
\fi\endgroup%
\begin{picture}(5079,1559)(1902,-7227)
\put(2100,-4170){\makebox(0,0)[lb]{\smash{{\SetFigFont{10}{8}{\rmdefault}{\mddefault}{\updefault}{\color[rgb]{0,0.6,0}{\tiny $\gamma(\tau^{1})$}}%
}}}}
\put(5125,-4150){\makebox(0,0)[lb]{\smash{{\SetFigFont{10}{8}{\rmdefault}{\mddefault}{\updefault}{\color[rgb]{0,0.6,0}{\tiny $\gamma(\tau^{1})$}}%
}}}}
\put(1700,-5600){\makebox(0,0)[lb]{\smash{{\SetFigFont{10}{8}{\rmdefault}{\mddefault}{\updefault}{\color[rgb]{0,0,0}{$\lambda_{1}^{1}(\tau^{1})$}}%
}}}}
\put(3100,-6850){\makebox(0,0)[lb]{\smash{{\SetFigFont{10}{8}{\rmdefault}{\mddefault}{\updefault}{\color[rgb]{0,0,0}$\lambda_{2}^{1}(\tau^{1})$}%
}}}}
\put(3750,-4850){\makebox(0,0)[lb]{\smash{{\SetFigFont{10}{8}{\rmdefault}{\mddefault}{\updefault}{\color[rgb]{0,0,0}$\lambda_{3}^{1}(\tau^{1})$}%
}}}}
\put(2760,-3120){\makebox(0,0)[lb]{\smash{{\SetFigFont{10}{8}{\rmdefault}{\mddefault}{\updefault}{\color[rgb]{0,0,0}$\lambda_{4}^{1}(\tau^{1})$}%
}}}}

\put(4550,-5850){\makebox(0,0)[lb]{\smash{{\SetFigFont{10}{8}{\rmdefault}{\mddefault}{\updefault}{\color[rgb]{0,0.6,0}{\tiny $\gamma(\tau^{2})$}}%
}}}}

\put(4100,-6750){\makebox(0,0)[lb]{\smash{{\SetFigFont{10}{8}{\rmdefault}{\mddefault}{\updefault}{\color[rgb]{0,0,0}$\lambda_{1}^{2}(\tau^{2})$}%
}}}}
\put(5800,-6950){\makebox(0,0)[lb]{\smash{{\SetFigFont{10}{8}{\rmdefault}{\mddefault}{\updefault}{\color[rgb]{0,0,0}$\lambda_{2}^{2}(\tau^{2})$}%
}}}}
\put(6800,-5300){\makebox(0,0)[lb]{\smash{{\SetFigFont{10}{8}{\rmdefault}{\mddefault}{\updefault}{\color[rgb]{0,0,0}$\lambda_3^{2}(\tau^{2})$}
}}}}
\put(5550,-4650){\makebox(0,0)[lb]{\smash{{\SetFigFont{10}{8}{\rmdefault}{\mddefault}{\updefault}{\color[rgb]{0,0,0}{\tiny $\lambda_{4}^{2}(\tau^{2})=\lambda_{2}(\tau^{1})$}}%
}}}}

\put(5700,-3120){\makebox(0,0)[lb]{\smash{{\SetFigFont{10}{8}{\rmdefault}{\mddefault}{\updefault}{\color[rgb]{0,0,0}$\lambda_{4}^{1}(\tau^{1})$}%
}}}}
\put(6800,-3600){\makebox(0,0)[lb]{\smash{{\SetFigFont{10}{8}{\rmdefault}{\mddefault}{\updefault}{\color[rgb]{0,0,0}$\lambda_{3}^{1}(\tau^{1})-\lambda_1^{1}(\tau^{1})$}%
}}}}

\end{picture}%
\caption{The metrics $g_{\boot}^{n+1}(\delta)_{\Lambda^{1}, \bar{l}^{1}}(\tau^{1})$ (left) and $g_{\boot}^{n+1}(\delta)_{(\Lambda^{1}, \Lambda^{2}), (\bar{l}^{1}, \bar{l}^{2})}(\tau^{1}, \tau^{2})$ (right)}
\label{bootdetail3}
\end{figure}

Continuing in this way some $m$ number of times we obtain a psc-metric denoted: $$g_{\boot}^{n+1}(\delta)_{(\Lambda^{1},\cdots \Lambda^{m}), (\bar{l}^{1},\cdots, \bar{l}^{m})}(\tau^{1},  \cdots, \tau^{m}).$$ 
This is a psc-metric on the $D^{n}\times I$ which is depicted below in Fig. \ref{bootdetail4}. Note that to ensure smooth transition at each stage, we insist that:
$$ \lambda_4^{(j)}(\tau^{(j)})=\lambda_{2}^{(j-1)}(\tau^{(j-1)}),$$
for all $j\in\{2, \cdots, m\}$.
Such a metric is known as a {\em $\delta-m$-step metric} or a more generally a $\delta$-step metric (ignoring the specific number of iterations in its construction). We denote by $L_{3}$ the full vertical height of this metric and note that it is given by the formula:
$$L_{3}=\lambda_{1}^{m}(\tau^{m})+\sum_{j=1}^{m-1}\lambda_{3}^{j}(\tau^{j})-\lambda_1^{j}(\tau^{j}).$$
For consistency, we will also use the notation $L_1=\lambda_{1}^{m}(\tau^{m})$, $L_2=\lambda_{2}^{m}(\tau^{m})$ and $L_{4}=\lambda_{4}^{1}(\tau^{1})$ for the height of the ``frontmost toe", the full base length and the top horizontal length. 

We denote by $\Riem_{\delta-\step}^{+}(D^{n}\times I)$, the subspace of $\Riem^{+}(D^{n}\times I)$ consisting of all such metrics over all $m\in\{1,2,\cdots\}$. Note that as any particular iteration may involve only the trivial bend, this space consists of all finite length cylinder metrics. More precisely, this is the space $\Riem_{\delta-\tor\cyl}^{+}(D^{n}\times I)$ defined:
$$ \Riem_{\delta-\tor\cyl}^{+}(D^{n}\times I):=\{g\in \Riem^{+}(D^{n}\times I): g=g_{\tor}^{n}(\delta)_{\lambda}+dt^{2} \text{ on $D^{n}\times [0,L]$, $L>0$, $\lambda\geq 0$}\}.$$ We conclude this section with a useful observation, Lemma \ref{stepprop} below.
\begin{figure}[!htbp]
\vspace{7.5cm}
\hspace{0cm}
\begin{picture}(0,0)
\includegraphics{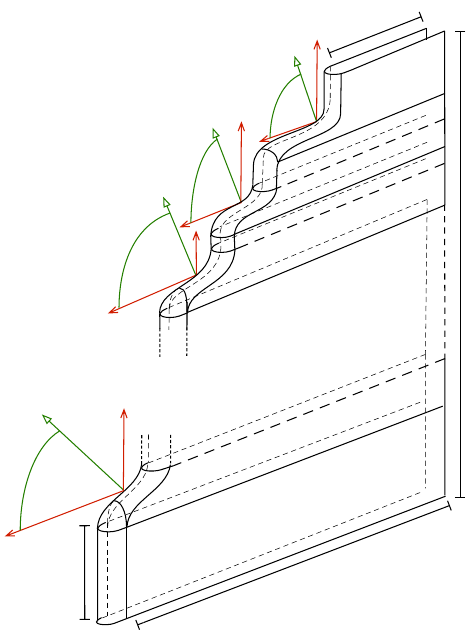}%
\end{picture}
\setlength{\unitlength}{3947sp}
\begingroup\makeatletter\ifx\SetFigFont\undefined%
\gdef\SetFigFont#1#2#3#4#5{%
  \reset@font\fontsize{#1}{#2pt}%
  \fontfamily{#3}\fontseries{#4}\fontshape{#5}%
  \selectfont}%
\fi\endgroup%
\begin{picture}(5079,1559)(1902,-7227)
\put(2880,-4290){\makebox(0,0)[lb]{\smash{{\SetFigFont{10}{8}{\rmdefault}{\mddefault}{\updefault}{\color[rgb]{0,0.6,0}{$\gamma(\tau^{3})$}}%
}}}}
\put(3380,-3800){\makebox(0,0)[lb]{\smash{{\SetFigFont{10}{8}{\rmdefault}{\mddefault}{\updefault}{\color[rgb]{0,0.6,0}{\tiny$\gamma(\tau^{2})$}}%
}}}}
\put(2200,-6000){\makebox(0,0)[lb]{\smash{{\SetFigFont{10}{8}{\rmdefault}{\mddefault}{\updefault}{\color[rgb]{0,0.6,0}$\gamma(\tau^{m})$}%
}}}}
\put(4020,-3120){\makebox(0,0)[lb]{\smash{{\SetFigFont{10}{8}{\rmdefault}{\mddefault}{\updefault}{\color[rgb]{0,0.6,0}{\tiny $\gamma(\tau^{1})$}}%
}}}}

\put(2300,-6750){\makebox(0,0)[lb]{\smash{{\SetFigFont{10}{8}{\rmdefault}{\mddefault}{\updefault}{\color[rgb]{0,0,0}$L_1$}%
}}}}
\put(4200,-6900){\makebox(0,0)[lb]{\smash{{\SetFigFont{10}{8}{\rmdefault}{\mddefault}{\updefault}{\color[rgb]{0,0,0}$L_2$}%
}}}}

\put(5550,-4350){\makebox(0,0)[lb]{\smash{{\SetFigFont{10}{8}{\rmdefault}{\mddefault}{\updefault}{\color[rgb]{0,0,0}$
L_{3}$}
}}}}

\put(4680,-2370){\makebox(0,0)[lb]{\smash{{\SetFigFont{10}{8}{\rmdefault}{\mddefault}{\updefault}{\color[rgb]{0,0,0}$L_4$}%
}}}}
\end{picture}%
\caption{The metric $g_{\boot}^{n+1}(\delta)_{(\Lambda^{1},\cdots \Lambda^{m}), (\bar{l}^{1},\cdots, \bar{l}^{m})}(\tau^{1},  \cdots, \tau^{m})$}
\label{bootdetail4}
\end{figure}

\begin{lemma}\label{stepprop}
Let $n\geq 3$. The inclusion: $$i:\Riem_{\delta-\tor\cyl}^{+}(D^{n}\times I)\hookrightarrow \Riem_{\delta-\step}^{+}(D^{n}\times I)$$ is a weak homotopy equivalence.
\end{lemma}
\begin{proof}
Suppose $K\rightarrow \Riem_{\delta-\step}^{+}(D^{n}\times I), k\mapsto g_k$ is a compact family of step metrics. 
Given any such metric $g_k$ with straight edge lengths $L_1(k), L_2(k), L_3(k)$ and $L_4(k)$, there is an obvious isotopy which moves this metric to the cylinder $g_{\tor}^{n}(\delta)_{L_4(k)}+dt^{2}$ along the interval $[0,L_3(k)]$. Starting on the long end (with neck length $L_2(k)$) we utilise Lemma \ref{bootisot} to unbend back to the height of the next cylinder piece immediately above. Continuing in this way $m$ times, allows us to move $g_k$ through psc-metrics to the desired cylinder metric; see Fig. \ref{partialstepmetric}.

\begin{figure}[!htbp]
\vspace{4cm}
\hspace{-6.5cm}
\begin{picture}(0,0)%
\includegraphics{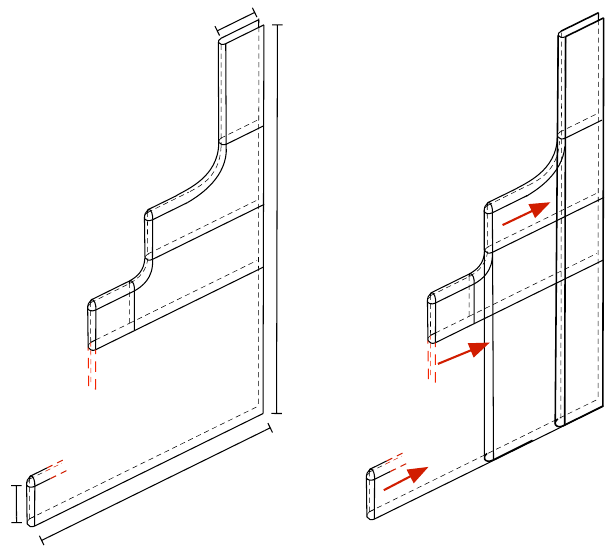}%
\end{picture}%
\setlength{\unitlength}{3947sp}%
\begingroup\makeatletter\ifx\SetFigFont\undefined%
\gdef\SetFigFont#1#2#3#4#5{%
  \reset@font\fontsize{#1}{#2pt}%
  \fontfamily{#3}\fontseries{#4}\fontshape{#5}%
  \selectfont}%
\fi\endgroup%
\begin{picture}(2079,2559)(1902,-5227)
\put(1550,-4850){\makebox(0,0)[lb]{\smash{{\SetFigFont{10}{8}{\rmdefault}{\mddefault}{\updefault}{\color[rgb]{0,0,0}$
L_{1}(k)$}
}}}}

\put(3150,-4800){\makebox(0,0)[lb]{\smash{{\SetFigFont{10}{8}{\rmdefault}{\mddefault}{\updefault}{\color[rgb]{0,0,0}$
L_{2}(k)$}
}}}}

\put(4200,-2600){\makebox(0,0)[lb]{\smash{{\SetFigFont{10}{8}{\rmdefault}{\mddefault}{\updefault}{\color[rgb]{0,0,0}$L_3(k)$}%
}}}}

\put(3600,-800){\makebox(0,0)[lb]{\smash{{\SetFigFont{10}{8}{\rmdefault}{\mddefault}{\updefault}{\color[rgb]{0,0,0}$L_4(k)$}%
}}}}

\end{picture}%
\caption{The step metric $g_k$ (left) and the isotopy back to the cylinder metric (right)} 
\label{partialstepmetric}
\end{figure}

By compactness, we can regard all metrics in this family as $\delta-m$-step metrics for some fixed $m$ (allowing for trivial bends). Applying the isotopy to the entire family above induces a homotopy on the map $K\rightarrow \Riem_{\delta-\step}^{+}(D^{n}\times I)$ to a map with image in $\Riem_{\tor\cyl}^{+}(D^{n-1}\times I)$. Importantly, at each stage in this homotopy, cylinder metrics are never bent but are at worst shrunk down within the space of cylinder metrics.
\end{proof}

\section{The Proofs of Theorems A and B}\label{ThmAproof}
We begin by recalling some notation and technical conditions concerning the geometric set-up for Theorem A.
Let $W$ and $X$ be as described in the introduction. Thus, $W$ is a smooth compact manifold with boundary $\partial W=X$, a closed manifold. Throughout, we assume that $W$ has dimension $n+1$ and that $n\geq 3$. Furthermore, there is a collar ${c}:X\times [0,2)\rightarrow W$ around $X$. Recall from the introduction that the space $\Riem^{+}(W, \p W)$ is the space of all psc-metrics on $W$ whose restriction to $c(X\times I)$ pulls back to a product metric on $X\times I$. Furthermore for each $g\in \Riem^{+}(X)$, $\Riem^{+}(W, \p W)_g$ denotes the space of all psc-metrics on $W$ which extend $g$. This latter space may be empty.

An important special case is that of a smooth compact $(n+1)$-dimensional manifold $Z$ with boundary $\partial Z$, a disjoint union of closed manifolds $X_0$ and $X_1$. In this case we specify a pair of disjoint collars, $c_i:X_i\times [0,2)\rightarrow Z$ around $X_i$ for each of $i=0,1$. Finally, $\Riem^{+}(Z, \p Z)_{g_0, g_1}$ is the space of psc-metrics $g$ on $W$ so that $c_{i}^{*}g_{i}=g_{i}+dt^{2}$ when restricted to $X_{i}\times I$ for each of $i=0,1$. In Theorem A, we will be interested in gluing elements $h\in\Riem^{+}(Z, \p Z)_{g_0, g_1}$ to elements $\bar{g}\in\Riem^{+}(W, \p W=X_0)_{g}$. If the metric $g=g_0$, this is easy. If not, but if $g$ and $g_0$ are concordant, then an intermediary concordance can be used. As part of the preparation for this, we will need to be able to make the following technical adjustment.  

\subsection{Adjustments on the collar}\label{coladj}
Suppose $g_0, g_1\in\Riem^{+}(X)$ are a pair of concordant psc-metrics. Thus, there is a psc-metric $\bar{g}_{\con}$ on the cylinder $X\times [0, L+2]$, for some $L>0$, satisfying:
$$\bar{g}_{\con} = g_1+dt^{2} \text{ on $X\times [0,1]$ and } \bar{g}_{\con}=g_0+dt^{2} \text{ on $X\times [L+1,L+2]$}.$$
(Note here the slightly unorthodox direction of this concordance.)
We would like to specify a map $\Riem^{+}(W, \p W)_{g_0}\rightarrow \Riem^{+}(W, \p W)_{g_1}$ which, roughly speaking, sends each metric $h\in \Riem^{+}(W, \p W)_{g_0}$ to the metric $h\cup \bar{g}_{\con}$ obtained by the obvious gluing. There is a slight problem. Technically, the metric resulting from this attachment is a metric on the manifold $W\cup (X\times [0, L+2])$, not $W$. We get around this problem as follows. We attach to $W$, the cylinder $X\times [0,L+2]$ by identifying $X\times \{L+2\}$ with $\partial W=X$ in the obvious way. We then specify a diffeomorphism $F:W\longrightarrow W\cup (X\times [0,L+2])$ which satisfies the following.
\begin{enumerate}
\item[(i.)] The restriction $F|_{W\setminus c(X\times [0,2))}$ is the identity map from $W\setminus c(X\times [0,2))$ to the space $[W\cup (X\times [0,L+2])]\setminus [c(X\times [0,2))\cup (X\times [0,L+2])]$.
\item[(ii.)] The composition $F\circ c|_{X\times [0,1]}$ is the identity map from $X\times [0,1]$ to $X\times [0,1]\subset W\cup (X\times [0,L+2])$.
\end{enumerate}
In particular, this means that $F^{-1}( X\times [1,L+2]\cup c(X\times [0,2)))=c(X\times [1,2))$ as depicted in Fig. \ref{colstretch}. 
\begin{figure}[!htbp]
\vspace{-1cm}
\hspace{-2cm}
\begin{picture}(0,0)%
\includegraphics{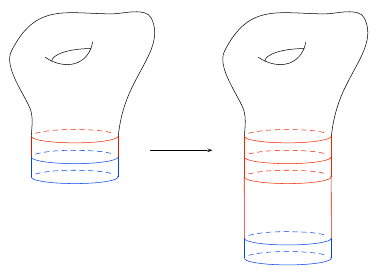}%
\end{picture}%
\setlength{\unitlength}{3947sp}%
\begingroup\makeatletter\ifx\SetFigFont\undefined%
\gdef\SetFigFont#1#2#3#4#5{%
  \reset@font\fontsize{#1}{#2pt}%
  \fontfamily{#3}\fontseries{#4}\fontshape{#5}%
  \selectfont}%
\fi\endgroup%
\begin{picture}(2079,2559)(1902,-5227)
\put(4600,-4500){\makebox(0,0)[lb]{\smash{{\SetFigFont{10}{8}{\rmdefault}{\mddefault}{\updefault}{\color[rgb]{0,0,0}$\textcolor{red}{c([0,2))\cup(X\times[1,L+2])}$}
}}}}
\put(4600,-5100){\makebox(0,0)[lb]{\smash{{\SetFigFont{10}{8}{\rmdefault}{\mddefault}{\updefault}{\color[rgb]{0,0,0}$\textcolor{blue}{X\times[0,1]}$}
}}}}
\put(1550,-4230){\makebox(0,0)[lb]{\smash{{\SetFigFont{10}{8}{\rmdefault}{\mddefault}{\updefault}{\color[rgb]{0,0,0}$\textcolor{red}{c([1,2))}$}
}}}}
\put(1550,-4430){\makebox(0,0)[lb]{\smash{{\SetFigFont{10}{8}{\rmdefault}{\mddefault}{\updefault}{\color[rgb]{0,0,0}$\textcolor{blue}{c([0,1])}$}
}}}}
\end{picture}%
\caption{The map $F:W\longrightarrow W\cup (X\times [0,L+2])$}
\label{colstretch}
\end{figure} 
\noindent This allows us to define a map: 
\begin{equation*}
\begin{split}
\mu_{\bar{g}_{\con}}:\Riem^{+}(W, \p W)_{g_0}&\longrightarrow \Riem^{+}(W, \p W)_{g_1}\\
h&\longmapsto F^{*}(h\cup \bar{g}_{\con}),
\end{split}
\end{equation*}
where $h\cup \bar{g}_{\con}$ is the psc-metric obtained on $W\cup (X\times [0,L+2])$ by the obvious gluing. The second condition on $F$ above means that the pullback metric satisfies the appropriate collar condition on $c[X\times [0,1]]$, i.e. $$c^{*}F^{*}(h\cup \bar{g}_{\con})|_{X\times [0,1]}=g_{1}+dt^{2}.$$ Thus, the map is well-defined. Over the next two sections we will need to use this ``collar adjustment" construction during the proof of Theorem A. This will involve the construction of maps by adding a fixed concordance of the type $\bar{g}_{\con}$ to psc-metrics $h$ on $W$. To avoid an overwhelming amount of notation we will simply describe the resulting metric as $h\cup \bar{g}_{\con}$, assuming that the necessary adjustments have been taken care of.


\subsection{Extending the Gromov-Lawson construction over the trace of a surgery} We begin with the set-up described in section \ref{GLconstruction}. Recall that $\phi:S^{p}\times D^{q+1}\hookrightarrow X$ is an embedding, where $\dim X=n=p+q+1$ and $q\geq 2$. As this set-up will be used to prove Theorem A, we further assume that $p\geq 2$. Recall, we have a collection of rescaling maps $\sigma_\rho:S^{p}\times D^{q+1}\longrightarrow S^{p}\times D^{q+1}$ defined by $\sigma_{\rho}(x,y)=(x, \rho y)$, where $\rho\in(0,1]$. We then set $\phi_{\rho}:=\phi\circ\sigma_\rho$ and $N_{\rho}:=\phi_{\rho}(S^{p}\times D^{q+1})$, abbreviating $N:=N_{1}$. 

As discussed in section \ref{GLreview}, the Gromov-Lawson Surgery Theorem provides a technique for replacing $g\in\Riem^{+}(X)$ with a new psc-metric $g_{\std}\in\Riem^{+}(X)$. We may assume that the metric $g_{\std}$ satisfies the condition that $\phi_{\frac{1}{2}}^{*}g_{\std}=ds_{p}^{2}+g_{\tor}^{q+1}$ while outside $N$, $g_{\std}=g$. We recall that this metric is now surgery ready. By removing the standard piece $ds_{p}^{2}+g_{\tor}^{q+1}$ on $N_{\frac{1}{2}}$ and attaching the metric $g_{\tor}^{p+1}+ds_{q}^{2}$, we obtain a metric $g'\in \Riem^{+}(X')$, where $X'$ is the manifold obtained from $X$ by surgery on the embedding $\phi$.

Let us now consider the trace $T_{\phi}$ of the surgery on $
\phi$. Recall that this is the smooth manifold with boundary obtained by attaching to $X\times I$, the disk product $D^{p+1}\times D^{q+1}$, via the embedding $\phi$. Thus $\partial T_{\phi}=X\sqcup X'$ as depicted earlier in Fig. \ref{surgerytracedef}. In \cite{Walsh1} we describe in detail a procedure for extending a psc-metric $g$ over $T_{\phi}$ to obtain an element $\bar{g}\in\Riem^{+}(T_{\phi})_{g,g'}$. For details, the reader is referred to Theorem 2.2 in \cite{Walsh1}. Roughly, the metric $\bar{g}$ is constructed as follows.
\begin{enumerate}
\item[1.] Using Lemma \ref{welldefconc}, equip $X\times I$ with the concordance $\bar{g}_{\con}$, arising from the Gromov-Lawson isotopy between $g$ and $g_{\std}$ and described in Theorem \ref{GLcompact}. The metric $\bar{g}_{\con}$ is constructed to be a product $g+dt^{2}$ near $X\times\{1\}$ and $g_{\std}+dt^{2}$ near $X\times\{0\}$. This is depicted in the left image in Fig. \ref{tracemetricold}. 
\item[2.] Attach to $N_{\frac{1}{2}}\times\{0\}\subset X\times\{0\}$ a piece which is almost $(D^{p+1}\times D^{q+1}, g_{\tor}^{p+1}+g_{\tor}^{q+1})$ but which contains an extra smoothing region to avoid corners. This is depicted in the right of Fig. \ref{tracemetricold}. 
\item[3.] Of course, on this extra smoothing region, the metric is not a product. However, in the proof of Theorem 2.2 of \cite{Walsh1}, we show how to adjust the metric on this region so as to obtain one which is a product near the boundary.
\end{enumerate}

\begin{figure}[!htbp]
\vspace{-2cm}
\hspace{-9cm}
\begin{picture}(0,0)%
\includegraphics{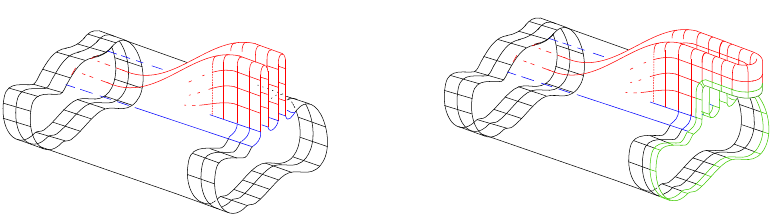}%
\end{picture}%
\setlength{\unitlength}{3947sp}%
\begingroup\makeatletter\ifx\SetFigFont\undefined%
\gdef\SetFigFont#1#2#3#4#5{%
  \reset@font\fontsize{#1}{#2pt}%
  \fontfamily{#3}\fontseries{#4}\fontshape{#5}%
  \selectfont}%
\fi\endgroup%
\begin{picture}(2079,2559)(1902,-5227)
\put(3300,-5300){\makebox(0,0)[lb]{\smash{{\SetFigFont{10}{8}{\rmdefault}{\mddefault}{\updefault}{\color[rgb]{0,0,0}$\textcolor{black}{(X\times\{0\}, g_{\std}+dt^{2})}$}
}}}}
\put(1300,-3600){\makebox(0,0)[lb]{\smash{{\SetFigFont{10}{8}{\rmdefault}{\mddefault}{\updefault}{\color[rgb]{0,0,0}$\textcolor{black}{(X\times\{1\}, g+dt^{2})}$}
}}}}

\end{picture}%
\caption{The concordance $\bar{g}_{\con}$ of $g$ and $g_{\std}$ (left) and the original Gromov-Lawson trace construction before adjustment for product structure near boundary (right)}
\label{tracemetricold}
\end{figure} 

\begin{remark}
In Theorem 2.2 of \cite{Walsh1}, we actually consider the more general case of a {\em Gromov-Lawson cobordism}, which consists of a union of Gromov-Lawson traces determined by an appropriate Morse function. In this paper, we need consider only elementary cobordisms.
\end{remark}

In hindsight, this method can be made a little neater. Before attaching $D^{p+1}\times D^{q+1}$, we make a further preparation. 
The initial concordance, $\bar{g}_{\con}$, ends as a product $g_{\std}+dt^{2}$. Consider for a moment the cylinder $X\times I$ equipped only with this product, $g_{\std}+dt^{2}$. Thus, on $N_{\frac{1}{2}}\times [0,1]$ this metric takes the form $ds_{p}^{2}+g_{\tor}^{q+1}+dt^{2}$. Applying the boot metric isotopy described in Lemma \ref{bootisot} we can replace the cylinder metric $(N_{\frac{1}{2}}\times [0,1], ds_{p}^{2}+g_{\tor}^{q+1}+dt^{2} )$ with a product of boot metrics $(N_{\frac{1}{2}}\times [0,1], ds_{p}^{2}+g_{\boot}^{q+2}(\delta)_{\Lambda, \bar{l}})$, for some admissible psc-boot triple $(\delta, \Lambda, \bar{l})$. Moreover this can be done without altering the metric near $\p N_{\frac{1}{2}}\times I$ so as to ensure the metric smoothly transitions as $g_{\std}+dt^{2}$ over the remaining $(X\setminus N_{\frac{1}{2}})\times I$. To ensure compatibility, we further specify that $\delta=1$ and that the vector $\bar{l}=(l_1, l_2, l_3, l_4)$ controlling the various straight edge lengths of the boot satisfy: $l_1=l_4=1$. We have less control over $\Lambda$ and the remaining edge lengths $l_2$ and $l_3$, but that does not matter. We simply choose quantities which work. 

The resulting metric on $X\times I$, which takes the form $ds_{p}^{2}+g_{\boot}^{q+2}(1)_{\Lambda, \bar{l}}$ with $l_1=l_4=1$ on $N_{\frac{1}{2}}\times [0,1]$, is denoted $\bar{g}_{\std\boot}$. Using the usual identification, it is isometric to a metric on $X\times [0,L+2]$ which takes the form: $$ g_{\std}+dt^{2} \text{ on } X\times [L+1, L+2] \text{ and } ds_{p}^{2}+{g}_{\tor}^{q+1}(1)_{l_2}+dt^{2}\text{ on $N_{\frac{1}{2}}\times[0,1]$}.$$ Importantly, this metric has a product of ``toes": $ds_{p}^{2}+\hat{g}_{\tor}^{q+2}(1)_{1, l_2}$ on a sub-neighbourhood of $N_{\frac{1}{2}}\times[0,1]$. This sub-neighbourhood, which is diffeomorphic to $S^{p}\times D^{q+2}_{\stret}$ is denoted $\bar{N}_{\toe}$. Finally, we attach this metric to the earlier concordance, $\bar{g}_{\con}$ of $g$ and $g_{\std}$, in the obvious way. Essentially, we ``put boots on" the original concordance. The resulting metric is denoted $\bar{g}_{\pre}$ and is depicted in the left image of Fig. \ref{bettertracemetric}. Thus, $$\bar{g}_{\pre}:=\bar{g}_{\con}\cup\bar{g}_{\std\boot}.$$ Performing surgery whilst preserving the product structure on the newly constructed metric, $\bar{g}_{\pre}$, is now trivial. We simply remove $(\bar{N}_{\toe}, ds_{p}^{2}+\hat{g}_{\tor}^{q+2}(1)_{1, l_2})$ and attach the a metric $(\bar{N}_{\toe}', g_{\tor}^{p+1}\times g_{\tor}^{q+1})$. Here $\bar{N}_{\toe}'\cong D^{p+1}\times D^{q+1}$. The resulting psc-metric, denoted $\bar{g}$, is called a {\em Gromov-Lawson trace} and is depicted in the lower right of Fig. \ref{bettertracemetric}. 
\begin{figure}[!htbp]
\vspace{-1cm}
\hspace{-9.5cm}
\begin{picture}(0,0)%
\includegraphics{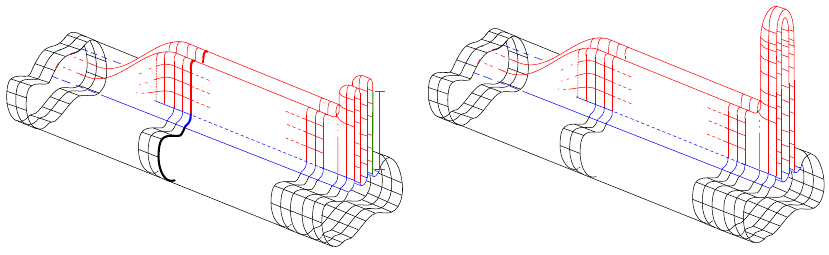}%
\end{picture}%
\setlength{\unitlength}{3947sp}%
\begingroup\makeatletter\ifx\SetFigFont\undefined%
\gdef\SetFigFont#1#2#3#4#5{%
  \reset@font\fontsize{#1}{#2pt}%
  \fontfamily{#3}\fontseries{#4}\fontshape{#5}%
  \selectfont}%
\fi\endgroup%
\begin{picture}(2079,2559)(1902,-5227)
\put(5000,-4250){\makebox(0,0)[lb]{\smash{{\SetFigFont{10}{8}{\rmdefault}{\mddefault}{\updefault}{\color[rgb]{0,0,0}$\textcolor{green}{l_2}$}
}}}}
\put(3300,-4900){\makebox(0,0)[lb]{\smash{{\SetFigFont{10}{8}{\rmdefault}{\mddefault}{\updefault}{\color[rgb]{0,0,0}$\textcolor{black}{\bar{g}_{\std\boot}}$}
}}}}
\put(2300,-4500){\makebox(0,0)[lb]{\smash{{\SetFigFont{10}{8}{\rmdefault}{\mddefault}{\updefault}{\color[rgb]{0,0,0}$\textcolor{black}{\bar{g}_{\con}}$}
}}}}
\end{picture}%
\caption{The metric $\bar{g}_{\pre}$ (left) and the neater Gromov-Lawson trace, $\bar{g}$ (right) }
\label{bettertracemetric}
\end{figure} 

Returning to the space $\Riem^{+}(W, \p W)_{g}$, it will be useful for us to think of the first part of this process as a map:
\begin{equation*}
\begin{split}
\mu_{\bar{g}_{\pre}}:\Riem^{+}(W, \p W)_{g}&\longrightarrow\Riem^{+}(W, \p W)_{g_{\std}(l_2)}\\
h&\longmapsto h\cup \bar{g}_{{\pre}},
\end{split}
\end{equation*}
where $g_{\std}(l_2)=ds_{p}^{2}+{g}_{\tor}^{q+1}(1)_{l_2}$. Here we make use of the collar adjustment technique described in section \ref{coladj}. We note that the addition of boots may stretch the torpedo neck length of the standard metric on the boundary. Hence we replace $g_{\std}$ with $g_{\std}(l_2)$. 
The image of this map, $\image{(\mu_{\bar{g}_{\pre}})}$ will be denoted $\Riem_{\boot}^{+}(W, \p W)_{g_{\std}(l_2)}$.

Before moving on to the proof of Theorem A, there is slight variation on the above construction that is worth considering.
Firstly, it will simplify matters to make available the following notation. Let us denote by $\Riem^{+}(W, \p W)_{g_{\std}(-)}$, the union of spaces $\Riem^{+}(W, \p W)_{g_{\std}(l)}$ over all $l>0$. Thus,
$$\Riem^{+}(W, \p W)_{g_{\std}(-)}:=\bigcup_{l>0} \Riem^{+}(W, \p W)_{g_{\std}(l)}.$$
Similarly, we define: $$\Riem_{\boot}^{+}(W, \p W)_{g_{\std}(-)}:=\bigcup_{l_2}\Riem_{\boot}^{+}(W, \p W)_{g_{\std}(l_2)},$$ over all {\em admissible} $l_2>0$ (i.e. $l_2$ such that the vector $\bar{l}$ forms part of a psc-boot triple $(1, \Lambda, \bar{l})$). 
\begin{proposition}\label{bootnecklengths} For any $l>0$ and any admissible $l_2>0$, the inclusions: $$\Riem^{+}(W, \p W)_{g_{\std}(l)}\subset \Riem^{+}(W, \p W)_{g_{\std}(-)}\text{ and }\Riem_{\boot}^{+}(W, \p W)_{g_{\std}(l_2)}\subset \Riem_{\boot}^{+}(W, \p W)_{g_{\std}(-)}$$ are weak homotopy equivalences.
\end{proposition}
\begin{proof}
This is a straightforward application of the techniques of Lemma \ref{bootisot}.
\end{proof}

Consider once again the construction of the metric $\bar{g}_{\std\boot}.$ Instead of replacing the cylindrical piece $(N_{\frac{1}{2}}\times[0,1], g_{\std}+dt^{2})$ with $(N_{\frac{1}{2}}\times[0,1], ds_{p}^{2}+g_{\boot}^{q+2}(1)_{\Lambda, \bar{l}})$ where $l_1=l_4=1$, we instead replace it with a metric $(N_{\frac{1}{2}}\times[0,1], ds_{p}^{2}+g_{\step}^{q+2}(1)(\bar{L}))$. Here the metric $g_{\step}^{q+2}(1)(\bar{L})$ is an element of the space of $1-\step$-metrics $\Riem_{1-\step}^{+}(D^{q+1}\times I)$ with straight edge pieces specified by the vector $\bar{L}=(L_1, L_2, L_3, L_4)$ as in section \ref{steps}. Note that we insist that $\bar{L}$ satisfies $L_1=L_4=1$ to ensure smooth attachment. There are of course a multitude of metrics in $\Riem_{1-\step}^{+}(D^{q+1}\times I)$ which satisfy this condition. (Note also that this space contains as a subspace all of the boot metrics of radius $1$.) We now consider the space of all metrics on $X\times I$ obtainable from such an attachment. This space, denoted $\Riem_{\step}^{+}(X\times I)_{{g_\std}(-)}$ is defined:
\begin{equation*}
\begin{split}
\Riem_{\step}^{+}(X\times I)_{{g_\std}(-)}:=\{&\bar{g}_{\step}\in \Riem^{+}(X\times I)_{g_{\std}(-)}:\bar{g}_{\step}|_{N_{\frac{1}{2}}}=ds_{p}^{2}+g_{\step}^{q+2}(1)(\bar{L}),\\ 
&g_{\step}^{q+2}(1)(\bar{L})\in \Riem_{1-\step}^{+}(D^{q+1}\times I)\text{ and } L_1=L_4=1\}. 
\end{split}
\end{equation*}

\noindent Finally, we denote by $\Riem_{\step}^{+}(X\times I)_{g_\std}(L_2)$, for any $L_2$, the subspace of $\Riem_{\step}^{+}(X\times I)_{{g_\std}(-)}$ consisting of metrics which take the form $g_{\std}(L_2)$ on the boundary. Note that once we restrict the torpedo neck length of the the boundary metric to some $L_2$, we limit the complexity of the corresponding step metric. That said, as the space $\Riem_{\step}^{+}(X\times I)_{g_\std}(-)$ includes even the trivial step metrics (the cylinders $g_{\std}(L_2)+dt^{2}$ for all $L_2>0$), the space $\Riem_{\step}^{+}(X\times I)_{g_\std}(L_2)$ is always non-empty.
\begin{proposition}\label{stepspecific}
For any $L_2>0$, the inclusion: $$\Riem_{\step}^{+}(X\times I)_{g_\std}(L_2)\subset \Riem_{\step}^{+}(X\times I)_{g_\std}(-)$$ is a weak homotopy equivalence.
\end{proposition}
\begin{proof}
This is a simple application of Lemma \ref{stepprop}.
\end{proof}

Let us return again to the space $\Riem^{+}(W, \p W)_{g}$. We now adjust the map $\mu_{\bar{g}_{\pre}}$ as follows. Instead of attaching to an element $h\in \Riem^{+}(W, \p W)_{g}$, the concordance $\bar{g}_{\pre}=\bar{g}_{\con}\cup\bar{g}_{\std\boot}$, we instead attach $\bar{g}_{\con}\cup \bar{g}_{\step}$ for some $\bar{g}_{\step}\in \Riem_{\step}^{+}(X\times I)_{g_\std}(-)$. We denote such a map $\mu_{\bar{g}_{\step}}$. To be clear, this map takes the form:
\begin{equation*}
\begin{split}
\mu_{\bar{g}_{\step}}:\Riem^{+}(W, \p W)_{g}&\longrightarrow\Riem^{+}(W, \p W)_{g_{\std}(-)}\\
h&\longmapsto h\cup \bar{g}_{\con}\cup \bar{g}_{\step},
\end{split}
\end{equation*}
Obviously, there many such maps, one for each $\bar{g}_{\step}\in \Riem_{\step}^{+}(X\times I)_{{g_\std}(-)}$.
We now denote by $\Riem_{\step}^{+}(W, \p W)_{{g_\std}(L_2)}$ and $\Riem_{\step}^{+}(W, \p W)_{{g_\std}(-)}$ the space defined:
$$ \Riem_{\step}^{+}(W, \p W)_{{g_{\std}}(L_2)}:=\bigcup_{\bar{g}_{\step}}\image(\mu_{\bar{g}_{\step}}) \text{ where $\bar{g}_{\step}\in \Riem_{\step}^{+}(X\times I)_{{g_{\std}}(L_2)}$}$$
and
$$ \Riem_{\step}^{+}(W, \p W)_{{g_{\std}}(-)}:=\bigcup_{\bar{g}_{\step}}\image(\mu_{\bar{g}_{\step}}) \text{ where $\bar{g}_{\step}\in \Riem_{\step}^{+}(X\times I)_{g_\std}(-)$.} $$ 
To aid the reader, we provide schematic images of elements from the spaces $\Riem_{\boot}^{+}(W, \p W)_{{g_{\std}}(-)}$, $\Riem_{\step}^{+}(W, \p W)_{{g_{\std}}(-)}$ and $\Riem^{+}(W, \p W)_{{g_{\std}}(-)}$ in Fig. \ref{bootstepping}

\begin{figure}[htb!]
\vspace{4cm}
\hspace{-11cm}
\begin{picture}(0,0)%
\includegraphics{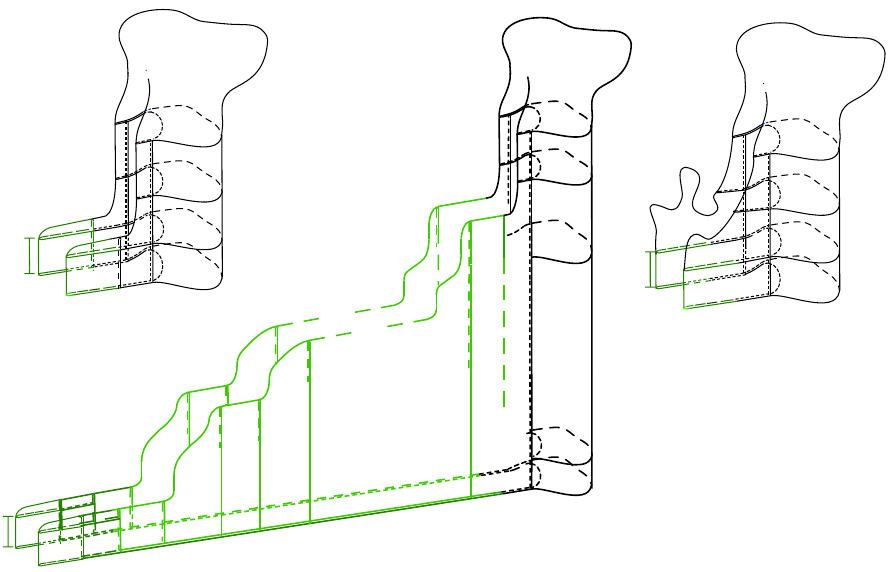}%
\end{picture}%
\setlength{\unitlength}{3947sp}%
\begingroup\makeatletter\ifx\SetFigFont\undefined%
\gdef\SetFigFont#1#2#3#4#5{%
  \reset@font\fontsize{#1}{#2pt}%
  \fontfamily{#3}\fontseries{#4}\fontshape{#5}%
  \selectfont}%
\fi\endgroup%
\begin{picture}(2079,2559)(1902,-5227)
\put(2030,-2750){\makebox(0,0)[lb]{\smash{{\SetFigFont{10}{8}{\rmdefault}{\mddefault}{\updefault}{\color[rgb]{0,0.6,0}${1}$}
}}}}
\put(1850,-4940){\makebox(0,0)[lb]{\smash{{\SetFigFont{10}{8}{\rmdefault}{\mddefault}{\updefault}{\color[rgb]{0,0.6,0}${1}$}
}}}}
\put(7000,-2850){\makebox(0,0)[lb]{\smash{{\SetFigFont{10}{8}{\rmdefault}{\mddefault}{\updefault}{\color[rgb]{0,0.6,0}${1}$}
}}}}
\end{picture}%
\caption{ Elements of the spaces $\Riem_{\boot}^{+}(W, \p W)_{{g_{\std}}(-)}$ (left), $\Riem_{\step}^{+}(W, \p W)_{{g_{\std}}(-)}$ (middle) and $\Riem^{+}(W, \p W)_{{g_{\std}}(-)}$ (right)}
\label{bootstepping}
\end{figure}

We now have the following lemma. 
\begin{lemma}\label{bootstepwhe}
Consider, for some admissible $l_2>0$, the following commutative diagram of inclusions.
$$\label{diag2}
\xymatrix{
&\Riem_{\boot}^{+}(W, \p W)_{{g_{\std}}(l_2)} \ar@{^{(}->}[d]^{} \ar@{^{(}->}[r]^{}&  \Riem_{\step}^{+}(W, \p W)_{{g_{\std}}(l_2)}     \ar@{^{(}->}[r]^{}\ar@{^{(}->}[d]^{} &  \Riem^{+}(W, \p W)_{{g_{\std}}(l_2)} \ar@{^{(}->}[d]^{} \\
& \Riem_{\boot}^{+}(W, \p W)_{{g_{\std}}(-)} \ar@{^{(}->}[r]^{} &    \Riem_{\step}^{+}(W, \p W)_{{g_{\std}}(-)} \ar@{^{(}->}[r]^{}&\Riem^{+}(W, \p W)_{{g_{\std}}(-)}
}
$$
Every map in this diagram is a weak homotopy equivalence.
\end{lemma}
\begin{proof}
We have shown already in Propositions \ref{bootnecklengths} and \ref{stepspecific} that the vertical inclusions are weak homotopy equivalences. It suffices to show the same for the  horizontal maps on the bottom row. That the bottom left horizontal inclusion is a weak homotopy equivalence follows immediately from Lemma \ref{stepprop}. In the case of the bottom right inclusion, let $K\rightarrow \Riem^{+}(W, \p W)_{{g_{\std}}(-)}$ be a compact family of psc-metrics. By stretching torpedo necks if necessary, we can assume that the image of $K$ lies in $\Riem^{+}(W, \p W)_{{g_{\std}}(l_4)}$ for some $l_4>0$. Applying the isotopy from Lemma \ref{bootisot} to the cylindrical part $g_{\std}(l)+dt^{2}$ moves this family continuously into $\Riem_{\step}^{+}(W, \p W)_{{g_{\std}}(l_2)}\subset \Riem_{\step}^{+}(W, \p W)_{{g_{\std}}(-)}$ for some $l_2>0$. Moreover, any metric which initially lies in $\Riem_{\step}^{+}(W, \p W)_{{g_{\std}}(-)}$ remains there as this isotopy has at worst the effect of adding another ``step". 
\end{proof}

\subsection{The first claim of Theorem A}
We are now in a position to introduce the objects described in the statement of Theorem A.  We begin by specifying the manifold $W'=W\cup T_{\phi}$ obtained by gluing the trace $T_{\phi}$ above to $W$ in the obvious way. The manifold $W'$ has boundary $\p W'=X'$. We next specify a collar $c':X'\times [0, 2)\rh T_\phi\subset W'$ around this boundary in such a way that the metric $(c')^{*}\bar{g}$ restricts on $X'\times I$ as the product metric $g'+dt^{2}$. 
This takes care of the first claim of Theorem A, and allows us to define the map: 
\begin{equation*}
\begin{split}
\mu_{T_{\phi}, \bar{g}}:\Riem^{+}(W, \p W)_{g}&\longrightarrow\Riem^{+}(W', \p W')_{g'}\\
h&\longmapsto h\cup \bar{g},
\end{split}
\end{equation*}
where $W'=W\cup T_{\phi}$ and $h\cup \bar{g}$ are the manifold and metric obtained by the obvious gluing. Our main challenge is still ahead of us. We must show that this map is a weak homotopy equivalence.
 
\subsection{The strategy for proving Theorem A}
The strategy for proving Theroem A is quite similar to that of the proof of Theorem \ref{Chernysh}. The spaces $\Riem^{+}(W, \p W)_{g}$ and $\Riem^{+}(W', \p W')_{g'}$ are complicated objects. We wish to replace these spaces with simpler, but weakly homotopy equivalent spaces which are easily seen to be weakly homotopy equivalent to each other. Before discussing this further, recall we remarked in the introduction that we must deal with the possibilty that the space $\Riem^{+}(W, \p W)_{g}$ may be empty. For now, we will assume that $\Riem^{+}(W, \p W)_{g}$ (and hence $\Riem^{+}(W', \p W')_{g'}$) is a non-empty space. We will call this the non-empty case of the theorem. Later we will prove that $\Riem^{+}(W, \p W)_{g}\neq \emptyset$ if and only if $\Riem^{+}(W', \p W')_{g'}\neq \emptyset$.  

Notice that, from above, the map $\mu_{T_{\phi}, \bar{g}}$ decomposes as a composition of maps: 
\begin{equation*}
\begin{split}
\mu_{T_{\phi}, \bar{g}}:\Riem^{+}(W, \p W)_{g}&\longrightarrow\Riem^{+}(W, \p W)_{g_{\std}(l_2)}\longrightarrow\Riem^{+}(W', \p W')_{g'}\\
h&\longmapsto h\cup \bar{g}_{{\pre}}\longmapsto h\cup \bar{g},
\end{split}
\end{equation*}
where $g_{\std}(l_2)=ds_{p}^{2}+{g}_{\tor}^{q+1}(1)_{l_2}$.
Earlier we denoted the first of these maps by $\mu_{\bar{g}_{\pre}}$.  We denote the second by $\mu_{\bar{g}}$. In the case of $\mu_{\bar{g}_{\pre}}$, we make use of the collar adjustment construction described in section \ref{coladj} but suppress the notation. We obtain the following commutative diagram.
$$
\xymatrix{
& \Riem^{+}(W, \p W)_{g}\ar@{->}[d]^{\mu_{\bar{g}_{\pre}}}\ar@{->}[r]^{\mu_{(T_{\phi},\bar{g})}}& \Riem^{+}(W', \p W')_{g'}\\
& \Riem^{+}(W, \p W)_{g_{\std}(l_2)}\ar@{->}[ru]^{\mu_{\bar{g}}} }
$$
Recall that $\Riem_{\boot}^{+}(W, \p W)_{g_{\std}(l_2)}:=\image{(\mu_{\bar{g}_{\pre}})}$. Setting $\Riem_{\Estd}^{+}(W', \p W')_{g'}:=\image{(\mu_{T_{\phi}, \bar{g}})}$, we reformulate this diagram as:
\begin{equation}\label{bdydiag}
\xymatrix{
& \Riem^{+}(W, \p W)_{g}\ar@{->}[d]^{\mu_{\bar{g}_{\pre}}}\ar@{->}[r]^{\mu_{(T_{\phi},\bar{g})}}& \Riem^{+}(W', \p W')_{g'}\\
& \Riem_{\boot}^{+}(W, \p W)_{g_{\std}(l_2)}\ar@{->}[r]^{\mu_{\bar{g}}} & \Riem_{\Estd}^{+}(W', \p W')_{g'}  \ar@{^{(}->}[u]^{}}
\end{equation}
where the hooked arrow indicates inclusion. The bottom horizontal map $\mu_{\bar{g}}$ is simply the map which removes the standard product of boots $ds_{p}^{2}+\hat{g}_{\tor}^{q+2}(1)_{1, l_2}$ and attaches $g_{\tor}^{p+1}+g_{\tor}^{q+1}$; see Fig. \ref{bootstretch61} for an illustration of what is going on. The space $\Riem_{\Estd}^{+}(W', \p W')_{g'}$ should be thought of as a space of {\em extra-standard} psc-metrics on $W'$ which extend $g'$. The motivation for this terminology will become clear later. The following lemma, an analogue of Proposition \ref{easyhomeoclosed}, is immediate. 
\begin{lemma}\label{homeobd}
The map $\mu_{\bar{g}}:  \Riem_{\boot}^{+}(W, \p W)_{g_{\std}(l_2)}\longrightarrow \Riem_{\Estd}^{+}(W', \p W')_{g'}$ is a homeomorphism.
\end{lemma}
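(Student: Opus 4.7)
The plan is to construct an explicit inverse by reversing the surgery, exploiting the fact that maps in both directions only modify a fixed standard piece of the metric. The key observation is that every metric in $\Riem_{\boot}^{+}(W)_{g_{\std}}(l)$, being in the image of $\mu_{\bar{g}_{\pre}}$, decomposes as $h \cup \bar{g}_{\pre}$ where $h$ is a psc-metric on $W$ (modulo the collar adjustment from section \ref{coladj}) and $\bar{g}_{\pre}$ is the fixed boot concordance metric on a collar region. The piece $\bar{g}_{\pre}$ takes the standard form $ds_p^2 + \hat{g}_{\tor}^{q+2}(1)_l$ on a distinguished sub-region adjacent to the boundary. Similarly, each element of $\Riem_{\std}^{+}(W')_{g'}(l) = \image(\mu_{T_\phi,\bar{g}})$ decomposes as $h \cup \bar{g}$, where $\bar{g}$ agrees with $\bar{g}_{\pre}$ outside the standardized surgery region and takes the fixed form $g_{\tor}^{p+1}(1) + g_{\tor}^{q+1}(1)_l$ on that region (this is precisely where the type-1 surgery has been carried out, replacing the boot's toe with the handle).

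First, I would verify that $\mu_{\bar{g}}$ is well-defined and a bijection onto its image. Since the map simply replaces the boot piece on $W$ by the handle piece on $W'$, and leaves the metric $h$ on $W \setminus \bar{\phi}(S^p \times D_+^{q+2})$ entirely unchanged, two metrics in $\Riem_{\boot}^{+}(W)_{g_{\std}}(l)$ with the same image under $\mu_{\bar{g}}$ must agree on the complement of the standard region. But on the standard region both are forced to equal $\bar{g}_{\pre}$, so they coincide; hence $\mu_{\bar{g}}$ is injective. Surjectivity onto $\Riem_{\std}^{+}(W')_{g'}(l)$ is immediate from the definition of the codomain as $\image(\mu_{T_\phi,\bar{g}})$.

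Next, I would write down the inverse explicitly using a complementary surgery of type 2, in the sense of section \ref{bdsurg}. Given $\tilde h \in \Riem_{\std}^{+}(W')_{g'}(l)$, remove the standard handle metric on the region $D^{p+1} \times S_+^{q+1}$ of $W'$ and glue back the boot piece $\bar{g}_{\pre}$ along the common boundary metric $ds_p^2 + g_{\tor}^{q+1}(1)_l$ on $S^p \times S^q \times [0,l]$. Since the standard pieces being swapped both induce exactly the same metric on the common boundary (that is how $\bar{g}$ was constructed from $\bar{g}_{\pre}$), the resulting metric is smooth, lies in $\Riem_{\boot}^{+}(W)_{g_{\std}}(l)$, and yields a map inverse to $\mu_{\bar{g}}$.

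Finally, continuity in both directions follows from the fact that both $\mu_{\bar{g}}$ and its inverse consist of restricting a metric to the complement of a fixed standard region (a continuous operation in the $C^\infty$-topology) and gluing on a fixed standard metric along a boundary on which both sides agree by construction. No significant obstacle is anticipated here; the lemma is genuinely a direct analogue of Lemma \ref{easyhomeoclosed}, with the boot metric $\bar{g}_{\pre}$ and the Gromov--Lawson trace $\bar{g}$ playing the roles of the two standard pieces being interchanged.
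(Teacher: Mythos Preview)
Your proposal is correct and matches the paper's intended approach: the paper does not actually give a proof of this lemma, stating only that it is ``immediate'' as an analogue of Lemma~\ref{easyhomeoclosed} and referring the reader to a figure. Your argument fills in exactly the details the paper suppresses---swapping the fixed standard toe piece $ds_p^2 + \hat{g}_{\tor}^{q+2}(1)_l$ for the fixed handle piece $g_{\tor}^{p+1}(1) + g_{\tor}^{q+1}(1)_l$ along their common boundary, with the complement of the standard region untouched---so there is nothing to add.
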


\begin{figure}[htb!]
\vspace{1.2cm}
\hspace{-9cm}
\begin{picture}(0,0)%
\includegraphics{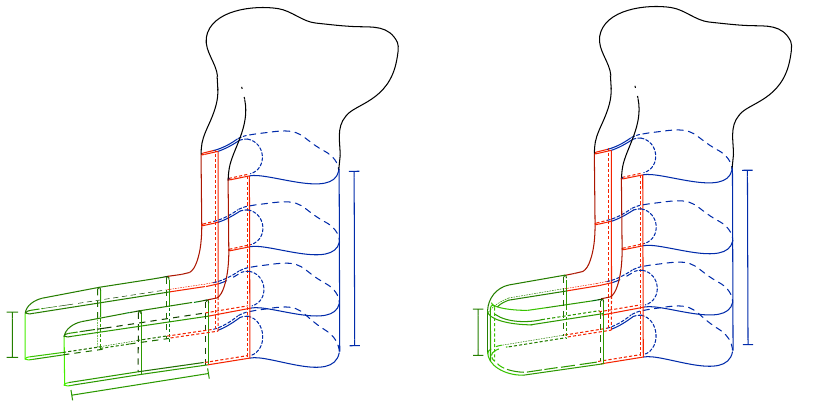}%
\end{picture}%
\setlength{\unitlength}{3947sp}%
\begingroup\makeatletter\ifx\SetFigFont\undefined%
\gdef\SetFigFont#1#2#3#4#5{%
  \reset@font\fontsize{#1}{#2pt}%
  \fontfamily{#3}\fontseries{#4}\fontshape{#5}%
  \selectfont}%
\fi\endgroup%
\begin{picture}(2079,2559)(1902,-5227)
\put(4800,-4150){\makebox(0,0)[lb]{\smash{{\SetFigFont{10}{8}{\rmdefault}{\mddefault}{\updefault}{\color[rgb]{0,0,0}$\textcolor{blue}{l_3}$}
}}}}
\put(7960,-4150){\makebox(0,0)[lb]{\smash{{\SetFigFont{10}{8}{\rmdefault}{\mddefault}{\updefault}{\color[rgb]{0,0,0}$\textcolor{blue}{l_3}$}
}}}}
\put(3000,-5170){\makebox(0,0)[lb]{\smash{{\SetFigFont{10}{8}{\rmdefault}{\mddefault}{\updefault}{\color[rgb]{0,0.6,0}${l_2}$}
}}}}
\put(1900,-4700){\makebox(0,0)[lb]{\smash{{\SetFigFont{10}{8}{\rmdefault}{\mddefault}{\updefault}{\color[rgb]{0,0.6,0}${1}$}
}}}}
\put(5610,-4700){\makebox(0,0)[lb]{\smash{{\SetFigFont{10}{8}{\rmdefault}{\mddefault}{\updefault}{\color[rgb]{0,0.6,0}${1}$}
}}}}
\end{picture}%
\caption{ An element $h\in\Riem_{\boot}^{+}(W, \p W)_{g_{\std}(l_2)}$ (left) and its image $\mu_{\bar{g}}(h)\in \Riem_{\Estd}^{+}(W', \p W')_{g'}$ (right)}
\label{bootstretch61}
\end{figure} 

\noindent To complete the proof of theorem A, we must show that the vertical maps in diagram (\ref{bdydiag}) are weak homotopy equivalences. This is reasonably straightforward for the left inclusion and we begin with that case below. As discussed in the introduction, the case of the right inclusion is more challenging. Our proof is modeled on the proof of the analogous Theorem \ref{Chernysh}. In particular, it will be done in stages involving some intermediary spaces. 

\subsection{The left side of the diagram}\label{leftside}
We start on the left hand side of diagram (\ref{bdydiag}). The map $\mu_{\bar{g}_{\pre}}$ further decomposes into two maps as follows. We denote as usual by $\bar{g}_{\con}\in \Riem^{+}(X\times I)$, the concordance between $g$ and $g_{\std}$, obtained via the Gromov-Lawson isotopy. After making use of the collar adjustment construction of section \ref{coladj}, we obtain a map:
\begin{equation*}
\begin{split}
\mu_{{\bar{g}_{\con}}}:\Riem^{+}(W, \p W)_{g}&\longrightarrow\Riem^{+}(W, \p W)_{g_{\std}}\\
h&\longmapsto h\cup \bar{g}_{\con}.
\end{split}
\end{equation*}
We describe the remaining part of the Gromov-Lawson trace construction, where we add the ``boot concordance" $\bar{g}_{\std\boot}$ as the map:
\begin{equation*}
\begin{split}
\mu_{\bar{g}_{\std\boot}}:\Riem^{+}(W, \p W)_{g_{\std}}&\longrightarrow \Riem_{\boot}^{+}(W, \p W)_{g_{\std}(l_2)}\\
h&\longmapsto h\cup \bar{g}_{\std\boot},
\end{split}
\end{equation*}
once again making use of the collar adjustment from section \ref{coladj}.
Thus, we have that:
$$
\mu_{\bar{g}_{\pre}}=\mu_{\bar{g}_{\std\boot}}\circ\mu_{{\bar{g}_{\con}}}.
$$

Importantly, the concordance $\bar{g}_{\con}$ is slicewise. By this, we mean that it takes the form (or at least pulls back by an obvious rescaling to) $g_{s}+ds^{2}$ on some $X\times [0,\bar{s}]$ where $g_s\in\Riem^{+}(X)$ for all $s\in [0, \bar{s}]$, $g_{0}=g$ and $g_{\bar{s}}=g_{\std}$. (Recall it was constructed directly from an isotopy as an appropriately scaled warped product metric.) This means that there is an obvious homotopy through concordances to the standard cylinder $g+ds^{2}$; see Corollary \ref{conchomot}. In particular, we obtain the following lemma.
\begin{lemma}\label{whe1}
The map $\mu_{\bar{g}_{\con}}$ is a weak homotopy equivalence.
\end{lemma}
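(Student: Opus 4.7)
The plan is to exhibit an explicit homotopy inverse to $\mu_{g_{\con}}$ by using a reversed concordance. Because $g_{\con}$ is slicewise, of the form $g_s + ds^2$ for a smooth path $s \mapsto g_s$ in $\Riem^{+}(X)$ with $g_0 = g$ and $g_{\bar s} = g_{\std}$, the \emph{reversed} isotopy $s \mapsto g_{\bar s - s}$ lies in $C^{\infty}(I, \Riem^{+}(X))$ and, via Lemma \ref{isotoconc}, gives a well-defined concordance $\bar g_{\con} \in \Riem^{+}(X \times I)$ from $g_{\std}$ back to $g$. Using the collar adjustment construction of section \ref{coladj}, this produces a map
\begin{equation*}
\mu_{\bar g_{\con}} : \Riem^{+}(W)_{g_{\std}} \longrightarrow \Riem^{+}(W)_{g}, \qquad h \longmapsto h \cup \bar g_{\con},
\end{equation*}
and the proposed inverse is $\mu_{\bar g_{\con}}$.

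First I would show that $\mu_{\bar g_{\con}} \circ \mu_{g_{\con}}$ is homotopic to the identity on $\Riem^{+}(W)_{g}$. This composition sends $h \in \Riem^{+}(W)_{g}$ to $h \cup g_{\con} \cup \bar g_{\con}$, and the glued-on piece $g_{\con} \cup \bar g_{\con}$ is a concordance from $g$ to itself arising (via the $\con$ map) from the loop in $\Riem^{+}(X)$ that traces out $s \mapsto g_s$ and then returns along $s \mapsto g_{\bar s - s}$. This loop is canonically nullhomotopic in $C^{\infty}(I, \Riem^{+}(X))$ by the standard \emph{turn-back at $\lambda$} homotopy $\lambda \mapsto (s \mapsto g_{\min(2s,\, 2\bar s - 2s)\cdot \lambda})$, $\lambda \in [0,1]$, which interpolates between the constant loop at $g$ when $\lambda = 0$ and the round-trip loop when $\lambda = 1$. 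Applying the continuous map $\con$ from Lemma \ref{isotoconc} pointwise in $\lambda$ produces a continuous family of concordances $\mathfrak{c}_{\lambda} \in \Riem^{+}(X \times I)_{g,g}$, with $\mathfrak{c}_0 = g + dt^2$ and $\mathfrak{c}_1 = g_{\con} \cup \bar g_{\con}$. Gluing each $\mathfrak{c}_\lambda$ onto $h$ via the collar adjustment gives a continuous homotopy $H_\lambda(h) = h \cup \mathfrak{c}_\lambda$ from a map $H_0$, which by construction equals the identity on $\Riem^{+}(W)_g$ (since attaching a trivial collar after the adjustment diffeomorphism $F$ recovers $h$ itself), to $H_1 = \mu_{\bar g_{\con}} \circ \mu_{g_{\con}}$.

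The argument that $\mu_{g_{\con}} \circ \mu_{\bar g_{\con}}$ is homotopic to the identity on $\Riem^{+}(W)_{g_{\std}}$ is completely analogous, now turning back the path $s \mapsto g_{\bar s - s}$ and its reverse at the endpoint $g_{\std}$. Hence $\mu_{g_{\con}}$ and $\mu_{\bar g_{\con}}$ are mutually homotopy inverse, in particular yielding the required weak homotopy equivalence.

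The main technical obstacle is ensuring that the homotopies $\lambda \mapsto H_\lambda$ are genuinely continuous jointly in $\lambda$ and in the base metric $h$, because the collar-adjustment diffeomorphism $F$ of section \ref{coladj} has to be chosen to depend continuously on the length data coming from the concordance $\mathfrak{c}_\lambda$ (which shrinks to zero effective length as $\lambda \to 0$). This is handled by choosing, once and for all, a family $F_\lambda : W \to W \cup (X \times [0, 3\lambda + \epsilon])$ of such diffeomorphisms varying smoothly with $\lambda$, agreeing with the identity near $\partial W$ on the uncollared part, and verifying that the pullback $F_\lambda^{*}(h \cup \mathfrak{c}_\lambda)$ lies in $\Riem^{+}(W)_g$ for all $\lambda$ and depends continuously on the pair $(h, \lambda)$. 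Once this bookkeeping is done, the homotopy inverse argument goes through and Lemma \ref{whe1} follows.
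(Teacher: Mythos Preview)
Your proof is correct and follows essentially the same approach as the paper: both construct a homotopy inverse by attaching the reversed concordance, then use the slicewise nature of $g_{\con}$ to deform the round-trip concordance back to a cylinder (the paper phrases this as ``isotopies of appropriate parts of the concordance to the cylinder metric, followed by any necessary rescaling''). Your version is slightly more explicit in routing the homotopy through the nullhomotopy in path space and the continuous map $\con$, and you spell out the collar-adjustment continuity issue that the paper leaves implicit, but the underlying idea is the same.
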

\begin{proof}
We define a homotopy inverse to $\mu_{{\bar{g}_{\con}}}$, 
$$\mu_{\bar{g}_{\con}^{-1}}: \Riem^{+}(W, \p W)_{g_{\std}}\longrightarrow\Riem^{+}(W, \p W)_{g},
$$
defined by attaching the concordance $\bar{g}_{\con}$ at the opposite end. By using the slicewise nature of the concordance, it is now easy to construct a homotopy of the maps: 
$\mu_{\bar{g}_{\con}}\circ \mu_{\bar{g}_{\con}^{-1}}$ and $\mu_{\bar{g}_{\con}^{-1}}\circ \mu_{\bar{g}_{\con}}$ to the appropriate identity maps via homotopies which specify isotopies of appropriate parts of the concordance to the cylinder metric, followed by any necessary rescaling. The case of $\mu_{\bar{g}_{\con}^{-1}}\circ \mu_{\bar{g}_{\con}}$ is illustrated in Fig. \ref{conchomo}.
\end{proof}
\begin{figure}[!htbp]
\vspace{2cm}
\hspace{-10cm}
\begin{picture}(0,0)%
\includegraphics{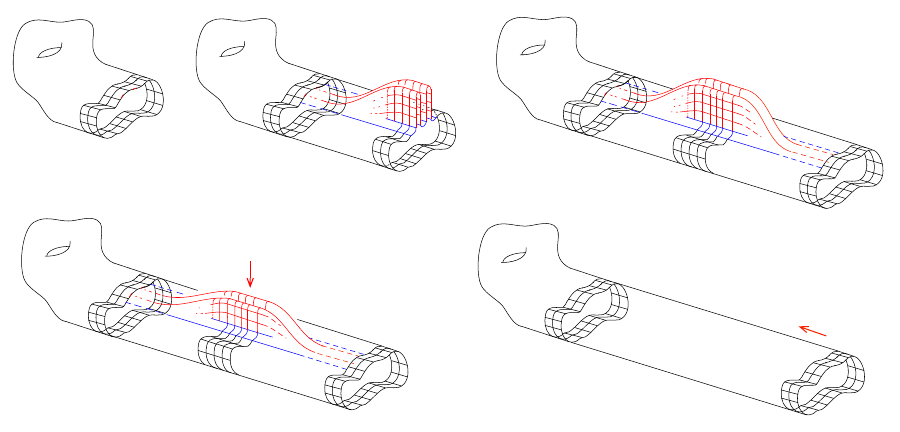}%
\end{picture}%
\setlength{\unitlength}{3947sp}%
\begingroup\makeatletter\ifx\SetFigFont\undefined%
\gdef\SetFigFont#1#2#3#4#5{%
  \reset@font\fontsize{#1}{#2pt}%
  \fontfamily{#3}\fontseries{#4}\fontshape{#5}%
  \selectfont}%
\fi\endgroup%
\begin{picture}(2079,2559)(1902,-5227)
\end{picture}%
\caption{The metric $h$ (top left), followed by the metric $\mu_{\bar{g}_{\con}}(h)$ (top middle), the metric $\mu_{\bar{g}_{\con}^{-1}}\circ \mu_{\bar{g}_{\con}}(h)$ (top right), the isotopy of the concordance back to the cylinder (bottom left) and rescaling (bottom right) back to $h$}
\label{conchomo}
\end{figure} 
\noindent The argument employed to prove Lemma \ref{whe1} works just as well in proving the following more general fact.
\begin{lemma} \label{quasihomo}
Suppose $g_1, g_2\in \Riem^{+}(X)$ are isotopic metrics. The spaces $\Riem^{+}(W, \p W)_{g_1}$ and $\Riem^{+}(W, \p W)_{g_2}$ are weakly homotopy equivalent.
\end{lemma}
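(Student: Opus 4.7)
The plan is to imitate, almost verbatim, the argument for Lemma \ref{whe1}, with the concordance between $g$ and $g_{\std}$ replaced by a slicewise concordance between the given isotopic metrics. First, given an isotopy $s \mapsto g_s$, $s \in I$, in $\Riem^{+}(X)$ with $g_0 = g_1$ and $g_1 = g_2$, I would apply Lemma \ref{isotoconc} to obtain an explicit slicewise concordance
\[
\fg := \con(s \mapsto g_s) \in \Riem^{+}(X \times I).
\]
By construction $\fg$ pulls back to $g_{f(t)} + dt^{2}$ for a cutoff function $f$ whose first and second derivatives are bounded by the constant $\Lambda(R_{\inf})$ supplied by Lemma \ref{isotopyimpliesconc}, so in particular $\fg$ is a product near both ends. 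Reversing the isotopy and reapplying $\con$ produces a concordance $\fg^{-1}$ from $g_2$ back to $g_1$.

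Next, I would define the candidate weak equivalence and its candidate inverse via the collar adjustment construction of section \ref{coladj}:
\[
\mu_{\fg}: \Riem^{+}(W)_{g_1} \longrightarrow \Riem^{+}(W)_{g_2},
\qquad
\mu_{\fg^{-1}}: \Riem^{+}(W)_{g_2} \longrightarrow \Riem^{+}(W)_{g_1},
\]
both given by $h \mapsto h \cup (\text{concordance})$ followed by the diffeomorphism $F$ that absorbs the attached cylinder into the original collar. These are continuous exactly because the collar adjustment construction of section \ref{coladj} is.

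To verify that $\mu_{\fg^{-1}} \circ \mu_{\fg}$ is homotopic to the identity on $\Riem^{+}(W)_{g_1}$, I would follow the picture from Fig. \ref{conchomo}. The composition attaches to each $h$ the concatenated concordance $\fg \# \fg^{-1}$, whose slice profile is the loop in $\Riem^{+}(X)$ that runs the isotopy $g_1 \rightsquigarrow g_2$ and then immediately runs it back. This loop is null-homotopic via the linear contraction $(u, s) \mapsto g_{u \cdot f(s)}$ of the isotopy parameter, $u \in [0,1]$. For each $u$, the $\con$ construction applied to the contracted loop yields a concordance, and because the cutoff-function bounds from Lemma \ref{isotoconc} shrink (not grow) as $u$ decreases, positivity of scalar curvature is maintained throughout the entire homotopy. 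At $u = 0$ the concordance is the trivial cylinder $g_1 + dt^{2}$, and a final rescaling of the collar (as in section \ref{coladj}) absorbs it back into $W$, producing the identity map. The symmetric argument handles $\mu_{\fg} \circ \mu_{\fg^{-1}}$.

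The main technical hurdle to watch is ensuring that the entire homotopy stays inside $\Riem^{+}(W)_{g_1}$ rather than wandering through non-psc metrics or violating the product boundary condition; but this is exactly what the bounds of Lemma \ref{isotoconc} and the continuous collar-absorption of section \ref{coladj} are designed to guarantee, so no new ingredient is needed and the proof will in fact yield a genuine homotopy equivalence (though only weak equivalence is claimed).
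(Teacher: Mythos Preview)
Your proposal is correct and follows exactly the approach the paper intends: the paper's proof is simply the sentence ``The argument employed to prove Lemma \ref{whe1} works just as well,'' and you have faithfully expanded that argument with the slicewise concordance replaced by the one coming from the given isotopy. One cosmetic point: your notation ``$g_0 = g_1$ and $g_1 = g_2$'' overloads the symbol $g_1$, so you should rename the path (e.g.\ $s \mapsto h_s$) to avoid confusion.
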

\begin{remark}
Lemma \ref{quasihomo} can also be obtained as a corollary to a much stronger result due to Chernysh in \cite{Che2}. We will come back to this result a little later.
\end{remark}
\begin{lemma}\label{whe2}
The map $\mu_{\bar{g}_{\std\boot}}$ is a weak homotopy equivalence.
\end{lemma}
\begin{proof}
The proof here is completely analogous to the proof of Lemma \ref{whe1}. We define a homotopy inverse $\mu_{\bar{g}_{\std\boot}^{-1}}$ as before by turning the concordance $\bar{g}_{\std\boot}$ around and gluing at the opposite end. Lemma \ref{bootisot} allows us to construct an isotopy of this concordance back to the standard cylinder and so the proof goes through exactly as in Lemma \ref{whe1}. To aid the reader, we illustrate the case of $\mu_{\bar{g}_{\std\boot}^{-1}}\circ \mu_{\bar{g}_{\std\boot}}$ in Fig. \ref{concboot}.
\end{proof}
\begin{figure}[!htbp]
\vspace{3.5cm}
\hspace{-4cm}
\begin{picture}(0,0)%
\includegraphics{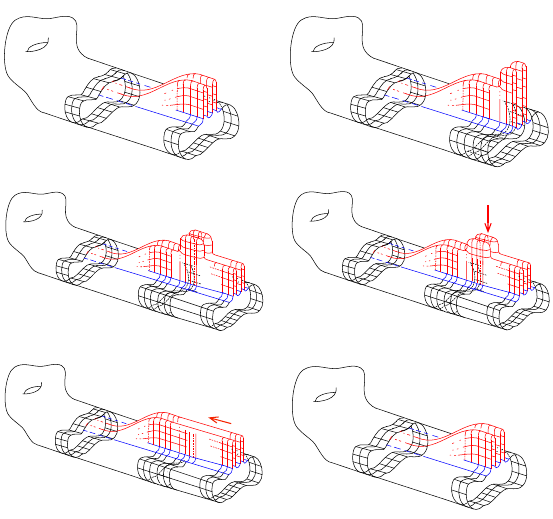}%
\end{picture}%
\setlength{\unitlength}{3947sp}%
\begingroup\makeatletter\ifx\SetFigFont\undefined%
\gdef\SetFigFont#1#2#3#4#5{%
  \reset@font\fontsize{#1}{#2pt}%
  \fontfamily{#3}\fontseries{#4}\fontshape{#5}%
  \selectfont}%
\fi\endgroup%
\begin{picture}(2079,2559)(1902,-5227)
\end{picture}%
\caption{The metric $h$ (top left), followed by the metric $\mu_{\bar{g}_{\std\boot}}(h)$ (top right), the metric  $\mu_{\bar{g}_{\std\boot}^{-1}}\circ \mu_{\bar{g}_{\std\boot}}(h)$ (middle left), the isotopy of the ``double boot" concordance back to the cylinder (middle right) and rescaling (bottom left) back to $h$ (bottom right)}
\label{concboot}
\end{figure} 

\noindent Combining Lemmas \ref{whe1} and \ref{whe2} gives the following.
\begin{lemma}\label{wheleft}
The map $\mu_{\bar{g}_{\pre}}$ is a weak homotopy equivalence.
\end{lemma}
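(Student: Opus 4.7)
The statement is essentially a formal consequence of the two preceding lemmas, so my plan is very short. Reading $\mu_{g_{\pre}}$ as the map $\mu_{\bar{g}_{\pre}}:\Riem^{+}(W)_{g}\to\Riem_{\boot}^{+}(W)_{g_{\std}}(l)$ introduced just before Lemma \ref{homeobd}, I would simply invoke the factorisation
\[
\mu_{\bar{g}_{\pre}} \;=\; \mu_{g_{\std\boot(l)}}\circ\mu_{g_{\con}},
\]
which was established when we split the attachment of the ``pre'' Gromov--Lawson trace into first gluing on the Gromov--Lawson concordance $g_{\con}$ between $g$ and $g_{\std}$, and then gluing on the boot concordance $g_{\std\boot(l)}$.

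The plan is then to combine Lemma \ref{whe1}, which shows that $\mu_{g_{\con}}:\Riem^{+}(W)_{g}\to\Riem^{+}(W)_{g_{\std}}$ is a weak homotopy equivalence, with Lemma \ref{whe2}, which shows the same for $\mu_{g_{\std\boot(l)}}:\Riem^{+}(W)_{g_{\std}}\to\Riem_{\boot}^{+}(W)_{g_{\std}}(l)$. Since the composition of two weak homotopy equivalences is itself a weak homotopy equivalence (each induces isomorphisms on all homotopy groups at every basepoint, and compositions of isomorphisms are isomorphisms), the conclusion follows immediately.

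There is no real obstacle here; the substantive work was done in Lemmas \ref{whe1} and \ref{whe2}, where explicit homotopy inverses were constructed by attaching reversed concordances and isotoping back to the cylinder (using Lemma \ref{bendisot} in the boot case and the slicewise structure of $g_{\con}$ from Lemma \ref{isotoconc} in the concordance case). The only point worth a brief remark is that the implicit collar-adjustment procedure from section \ref{coladj} is used silently throughout, so that each map is genuinely defined between the spaces as stated and the composition makes sense on the nose; once that bookkeeping is acknowledged, the proof reduces to a single sentence.
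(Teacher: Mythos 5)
Your proposal is correct and matches the paper exactly: the paper obtains Lemma \ref{wheleft} precisely by combining Lemmas \ref{whe1} and \ref{whe2} via the factorisation $\mu_{\bar{g}_{\pre}}=\mu_{g_{\std\boot(l)}}\circ\mu_{g_{\con}}$, so the result is immediate from the fact that a composition of weak homotopy equivalences is a weak homotopy equivalence. Your remark about the silent collar-adjustment bookkeeping from section \ref{coladj} is a fair and accurate observation, but nothing further is needed.
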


\subsection{The right side of the diagram}
It remains for us to show that the inclusion: 
$$
\Riem_{\Estd}^{+}(W', \p W')_{g'}\hookrightarrow \Riem^{+}(W', \p W')_{g'},
$$
is a weak homotopy equivalence. It will suit us to have some flexibility regarding the neck-length of the torpedo factor of the standard part of the boundary metric. Recall at the end of section \ref{GLreview}, we defined the space $\Riem^{+}(W', \p W')_{g'(-)}$. Essentially this slightly extends $\Riem^{+}(W', \p W')_{g'}$ by allow the torpedo factor of the standard region to have arbitrary neck-length. Importantly, in Lemma \ref{stdincludeggen}, we showed that the inclusion: 
$$ \Riem^{+}(W', \p W')_{g'}\subset\Riem^{+}(W', \p W')_{g'(-)}$$
is a weak homotopy equivalence.
Analogously, we define $\Riem_{\Estd}^{+}(W', \p W')_{g'(\lambda)}$ to be the space obtained by replacing the standard piece $(\bar{N}_{\toe}'\cong D^{p+1}\times D^{q+1}, g_{\tor}^{p+1}+g_{\tor}^{q+1})$ with $(\bar{N}_{\toe}'\cong D^{p+1}\times D^{q+1}, g_{\tor}^{p+1}(1)_{\lambda}+g_{\tor}^{q+1})$. We then define the space $\Riem_{\Estd}^{+}(W', \p W')_{g'(-)}$ as:
$$ \Riem_{\Estd}^{+}(W', \p W')_{g'(-)}:=\bigcup_{\lambda>0}\Riem_{\Estd}^{+}(W', \p W')_{g'(\lambda)}.$$
The following proposition is immediate.
\begin{proposition}\label{Estdgen}
For any $\lambda>0$, the inclusion:
$$\Riem_{\Estd}^{+}(W', \p W')_{g'(\lambda)}\subset \Riem_{\Estd}^{+}(W', \p W')_{g'(-)}$$
is a homotopy equivalence.
\end{proposition}
We consolidate these observations in the following commutative diagram of inclusions.
\begin{equation}\label{genEstd}
\xymatrix{
&\Riem^{+}(W', \p W')_{g'} \ar@{^{(}->}[r]^{}&\Riem^{+}(W', \p W')_{g'(-)} \\
&\Riem_{\Estd}^{+}(W', \p W')_{g'}\ar@{^{(}->}[r]^{}\ar@{^{(}->}[u]^{} &\Riem_{\Estd}^{+}(W', \p W')_{g'(-)}\ar@{^{(}->}[u]^{}}
\end{equation}
From Lemma \ref{stdincludeggen} and Proposition \ref{Estdgen} we know that the horizontal maps above are both weak homotopy equivalences. Thus, to prove that the left vertical map is a weak homotopy equivalence (completing the proof of Theorem A) it suffices to show that the right vertical map is a weak homotopy equivalence. For the remainder of the section we will work on this problem.

We begin by specifying an intermediary space, $\Riem_{\std}^{+}(W', \p W')_{g'(-)}$, satisfying:
$$
\Riem_{\Estd}^{+}(W', \p W')_{g'(-)}\subset \Riem_{\std}^{+}(W', \p W')_{g'(-)}\subset \Riem^{+}(W', \p W')_{g'(-)},
$$
and show that the above inclusions are both weak homotopy equivalences. The space $\Riem_{\std}^{+}(W', \p W')_{g'(-)}$ (in a slightly more general setting) features in Lemma \ref{stdincludeg} and we define it here in essentially the same way. We recall that the smooth manifold $W'$ decomposes as the union:
$$W' :=W\cup T_{\phi}=W\cup (X\times I)\cup_{\phi} D^{p+1}\times D^{q+1},$$
where the latter union is obtained by gluing the partial boundary $S^{p}\times D^{q+1}\subset D^{p+1}\times D^{q+1}$ to $X\times\{1\}$ via the embedding $\phi:S^{p}\times D^{q+1}\hookrightarrow X$. 
We denote by $\bar{\phi}$, the map $\bar{\phi}:D^{p+1}\times D^{q+1}\rightarrow W'$ which extends $\phi$ and is defined by:
\begin{enumerate}
\item[] $ \bar{\phi} (x,y)=(\phi(x,y), 1)\in X\times\{1\}$ when $(x,y)\in S^{p}\times D^{q+1}$,
\item[] $ \bar{\phi} (x,y)=(x,y)$ when $(x,y)\in (D^{p+1}\times D^{q+1})\setminus (S^{p}\times D^{q+1}). $ 
\end{enumerate}
The families of embeddings $\phi_{\rho}$ and $\bar{\phi}_{\rho}$ as well as the spaces $N_{\rho}=\phi_{\rho}(S^{p}\times D^{q+1})$ and $\bar{N}_{\rho}=\bar{\phi}_{\rho}(D^{p+1}\times D^{q+1})$, where $\rho\in(0,1]$ are as in the preamble to Lemma \ref{stdincludeg}. Moreover, we may assume that the collar $c':X'\times [0,2)\rh W'$ has been specified so as to ensure compatibility with $\bar{\phi}$ in the sense of Lemma \ref{stdincludeg}. In particular, this means that the embedded disk $D^{q+1}:=\bar{\phi}(\{0\}\times D^{q+1})$ is particularly nicely embedded with respect to metrics in $\Riem_{\Estd}^{+}(W', \p W')_{g'(-)}$; see Fig. \ref{spherespecfirst}.
\begin{figure}[htb!]
\vspace{1cm}
\hspace{-7cm}
\begin{picture}(0,0)%
\includegraphics{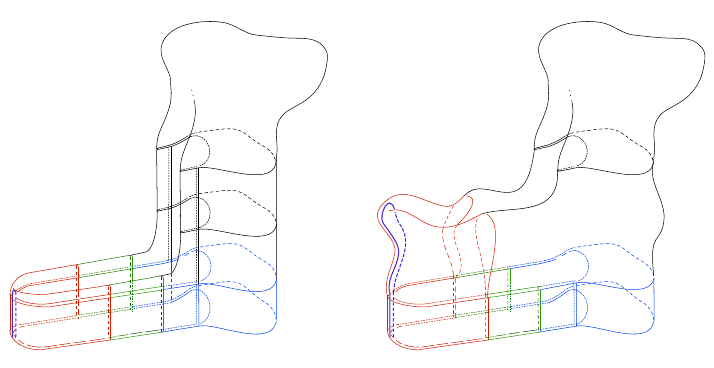}%
\end{picture}%
\setlength{\unitlength}{3947sp}%
\begingroup\makeatletter\ifx\SetFigFont\undefined%
\gdef\SetFigFont#1#2#3#4#5{%
  \reset@font\fontsize{#1}{#2pt}%
  \fontfamily{#3}\fontseries{#4}\fontshape{#5}%
  \selectfont}%
\fi\endgroup%
\begin{picture}(2079,2559)(1902,-5227)
\put(1000,-4350){\makebox(0,0)[lb]{\smash{{\SetFigFont{10}{8}{\rmdefault}{\mddefault}{\updefault}{\color[rgb]{0.7,0,0}$N_{\frac{1}{2}}\cong D^{p+1}\times D^{q+1}$}
}}}}
\put(1600,-4850){\makebox(0,0)[lb]{\smash{{\SetFigFont{10}{8}{\rmdefault}{\mddefault}{\updefault}{\color[rgb]{0.4,0,0.7}$D^{q+1}$}
}}}}
\end{picture}%
\caption{The embedded disk $D^{q+1}:=\bar{\phi}(\{0\}\times D^{q+1})$ (left) in the case of a metric from  $\Riem_{\Estd}^{+}(W', \p W')_{g'(-)}$ (left) and an arbitrary metric from $\Riem(W', \p W')_{g'(-)}$ (right)}
\label{spherespecfirst}
\end{figure} 
We now define the space $\Riem_{\std}^{+}(W', \p W')_{g'(-)}$ as:
$$\Riem_{\std}^{+}(W', \p W')_{g'(-)}:=\{\bar{h}\in \Riem^{+}(W', \p W')_{g'(-)}:\bar{\phi}_{\frac{1}{2}}^{*}\bar{h}=g_{\tor}^{p+1}(1)_{\lambda}+g_{\tor}^{q+1}\text{ for some } \lambda>0 \}.$$
A typical metric in this space is schematically depicted in Fig. \ref{spherespecsecond}.
\begin{figure}[htb!]
\vspace{1.2cm}
\hspace{-2cm}
\begin{picture}(0,0)%
\includegraphics{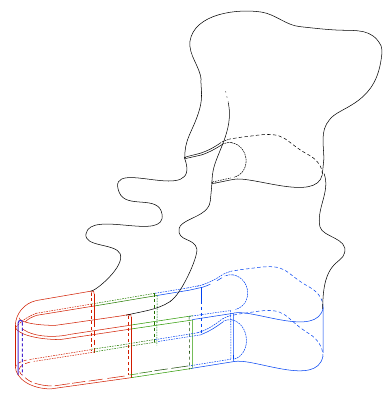}%
\end{picture}%
\setlength{\unitlength}{3947sp}%
\begingroup\makeatletter\ifx\SetFigFont\undefined%
\gdef\SetFigFont#1#2#3#4#5{%
  \reset@font\fontsize{#1}{#2pt}%
  \fontfamily{#3}\fontseries{#4}\fontshape{#5}%
  \selectfont}%
\fi\endgroup%
\begin{picture}(2079,2559)(1902,-5227)
\put(1000,-4300){\makebox(0,0)[lb]{\smash{{\SetFigFont{10}{8}{\rmdefault}{\mddefault}{\updefault}{\color[rgb]{0.7,0,0}$N_{\frac{1}{2}}\cong D^{p+1}\times D^{q+1}$}
}}}}
\put(1600,-4850){\makebox(0,0)[lb]{\smash{{\SetFigFont{10}{8}{\rmdefault}{\mddefault}{\updefault}{\color[rgb]{0.4,0,0.7}$D^{p+1}$}
}}}}
\end{picture}%
\caption{A typical element of  $\Riem_{\std}^{+}(W', \p W')_{g'(-)}$}
\label{spherespecsecond}
\end{figure} 
The following Lemma is now an immediate consequence of Lemma \ref{stdincludeggen}. Note that the roles of $p$ and $q$ are reversed from the statement of Lemma \ref{stdincludeggen}, which dealt with the analogous situation on the manifold $W$. However, as we assume both $p,q\geq 2$, the lemma goes through in the $W'$ case just as well.
\begin{lemma}\label{right1}
The inclusion: $$\Riem_{\std}^{+}(W', \p W')_{g'(-)}\subset \Riem^{+}(W', \p W')_{g'(-)}$$ is a weak homotopy equivalence.
\end{lemma}

It remains to show that the inclusion $\Riem_{\Estd}^{+}(W', \p W')_{g'(-)}\subset \Riem_{\std}^{+}(W', \p W')_{g'(-)}$
is a weak homotopy equivalence. To aid the reader, and to make sure it is clear that this inclusion makes sense, we begin by comparing an example of each type of metric in Fig. \ref{bootcomparison} below. Elements of $\Riem_{\Estd}^{+}(W', \p W')_{g'(-)}$ take a standard form on a larger region than elements of $\Riem_{\std}^{+}(W', \p W')_{g'(-)}$. This was the motivation for referring to them as ``extra-standard" metrics. To understand this, recall that a metric in $\Riem_{\Estd}^{+}(W', \p W')_{g'(-)}$ is the image of a metric in $\Riem_{\boot}^{+}(W, \p W)_{g_{\std}(l_2)}$. In turn, this pre-image metric takes the form: $h\cup \bar{g}_{\std\boot}$ for some $h\in \Riem^{+}(W, \p W)_{g}$. Let us denote by $C\subset W$, the region of $W$ on which the metric component $\bar{g}_{\std\boot}$ is defined. We then denote by $C'$, the analogous region of $W'$ obtained after surgery on $C$; see Fig. \ref{bootcomparison} where the region $C'$ is contained in the solid lines.

Let us now consider the pre-image metric. On $C$ it takes a form which was essentially obtained by pushing out boots with toe height $1$ from a product $g_{\std}+dt^{2}$ on $X\times [0, l_3]$ for some potentially large $l_3>1$. Thus, if we remove the subregion diffeomorphic to $N\times [0, l_3]$ on which the metric takes the form $ds_{p}^{2}+\bar{g}_{\boot}(1)_{\Lambda, \bar{l}}$, the remaining metric is isometric to $((X\setminus N)\times [0, l_3], g+dt^{2})$. Consider now the image metric in $\Riem_{\Estd}^{+}(W', \p W')_{g'(-)}$. There is a corresponding subregion of $C'$ (indicated by the thicker solid lines in Fig. \ref{bootcomparison}) where the metric also takes the form $((X\setminus N)\times [0, l_3], g+dt^{2})$. Fixing the region, but replacing the metric with one from $\Riem_{\std}^{+}(W', \p W')_{g'(-)}$ we see an immediate difference; see right of Fig. \ref{bootcomparison}. On the subregion corresponding to $(X\setminus N)\times [0, 1]$, these metrics agree. But outside of here the latter metric is arbitrary.

\begin{figure}[htb!]
\vspace{1.5cm}
\hspace{-11cm}
\begin{picture}(0,0)%
\includegraphics{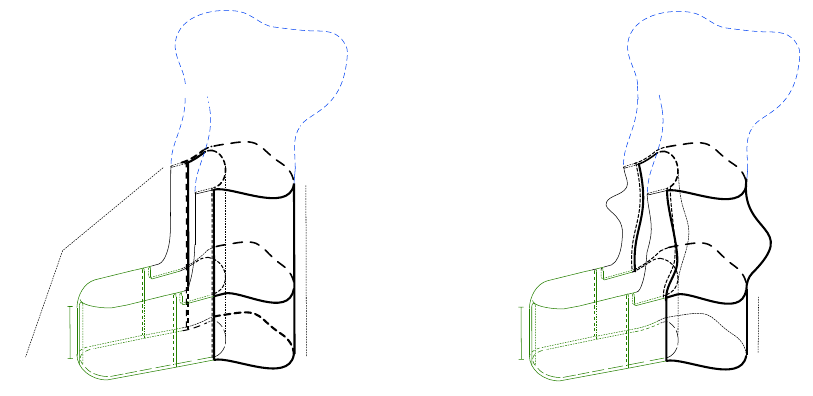}%
\end{picture}%
\setlength{\unitlength}{3947sp}%
\begingroup\makeatletter\ifx\SetFigFont\undefined%
\gdef\SetFigFont#1#2#3#4#5{%
  \reset@font\fontsize{#1}{#2pt}%
  \fontfamily{#3}\fontseries{#4}\fontshape{#5}%
  \selectfont}%
\fi\endgroup%
\begin{picture}(2079,2559)(1902,-5227)
\put(4400,-4250){\makebox(0,0)[lb]{\smash{{\SetFigFont{10}{8}{\rmdefault}{\mddefault}{\updefault}{\color[rgb]{0,0,0}$(X\setminus N)\times [0, l_3]$}
}}}}
\put(8000,-4760){\makebox(0,0)[lb]{\smash{{\SetFigFont{10}{8}{\rmdefault}{\mddefault}{\updefault}{\color[rgb]{0,0,0}$(X\setminus N)\times [0, 1]$}
}}}}
\put(3600,-2800){\makebox(0,0)[lb]{\smash{{\SetFigFont{10}{8}{\rmdefault}{\mddefault}{\updefault}{\color[rgb]{0,0,1}$X\setminus C'$}
}}}}
\put(7300,-2800){\makebox(0,0)[lb]{\smash{{\SetFigFont{10}{8}{\rmdefault}{\mddefault}{\updefault}{\color[rgb]{0,0,1}$X\setminus C'$}
}}}}

\put(2350,-4760){\makebox(0,0)[lb]{\smash{{\SetFigFont{10}{8}{\rmdefault}{\mddefault}{\updefault}{\color[rgb]{0,0.6,0}$1$}
}}}}
\put(2350,-4000){\makebox(0,0)[lb]{\smash{{\SetFigFont{10}{8}{\rmdefault}{\mddefault}{\updefault}{\color[rgb]{0,0,0}$C'$}
}}}}

\put(5950,-4760){\makebox(0,0)[lb]{\smash{{\SetFigFont{10}{8}{\rmdefault}{\mddefault}{\updefault}{\color[rgb]{0,0.6,0}$1$}
}}}}
\end{picture}%
\caption{An element of $\Riem_{\Estd}^{+}(W', \p W')_{g'(-)}$ and an arbitrary element of $\Riem_{\std}^{+}(W', \p W')_{g'(-)}$ (right) }
\label{bootcomparison}
\end{figure} 
We now focus on an arbitrary metric $h\in\Riem_{\std}^{+}(W', \p W')_{g'(-)}$.
Working only where this metric is standard (and thus where it agrees with every other metric in $\Riem_{\std}^{+}(W', \p W')_{g'(-)}$), we may specify a region $R_2$ diffeomorphic to $X\times I$ which decomposes $W'$ into $3$ pieces $R_1, R_2$ and $R_3$ where $R_1\sqcup R_2=W'\setminus \mathrm{int}(R_1)$; see Fig. \ref{bootspec}. The first piece, $R_1$, is diffeomorphic to the trace of the original surgery on $X$, while the third piece, $R_3$, is diffeomorphic to $W$. We will assume each piece contains its boundary and so $\{R_1, R_2, R_3\}$ is not strictly a partition of $W'$.

\begin{figure}[htb!]
\vspace{1.4cm}
\hspace{-3cm}
\begin{picture}(0,0)%
\includegraphics{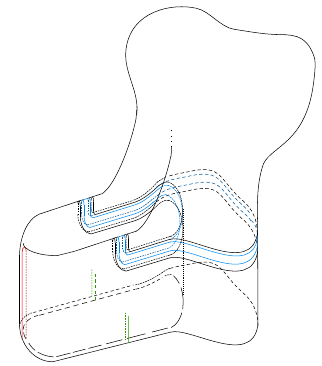}%
\end{picture}%
\setlength{\unitlength}{3947sp}%
\begingroup\makeatletter\ifx\SetFigFont\undefined%
\gdef\SetFigFont#1#2#3#4#5{%
  \reset@font\fontsize{#1}{#2pt}%
  \fontfamily{#3}\fontseries{#4}\fontshape{#5}%
  \selectfont}%
\fi\endgroup%
\begin{picture}(2079,2559)(1902,-5227)
\put(3900,-3150){\makebox(0,0)[lb]{\smash{{\SetFigFont{10}{8}{\rmdefault}{\mddefault}{\updefault}{\color[rgb]{0,0,0}$\textcolor{black}{R_3}$}
}}}}
\put(2500,-4560){\makebox(0,0)[lb]{\smash{{\SetFigFont{10}{8}{\rmdefault}{\mddefault}{\updefault}{\color[rgb]{0,0,0}$\textcolor{black}{R_1}$}
}}}}
\put(4200,-4240){\makebox(0,0)[lb]{\smash{{\SetFigFont{10}{8}{\rmdefault}{\mddefault}{\updefault}{\color[rgb]{0,0,0}$\textcolor{black}{R_2}$}
}}}}
\end{picture}%
\caption{ An arbitrary element $h\in\Riem_{\std}^{+}(W', \p W')_{g'(-)}$}
\label{bootspec}
\end{figure} 

We will take a closer look at the region $R_2$. As shown in Fig. \ref{adjustmentregion}, this region can be decomposed into $2$ pieces: $R_2'$ and $R_2''$.
One of these, $R_2'$, highlighted in Fig. \ref{adjustmentregion}, is diffeomorphic to $I\times S^{p}\times D^{q+1}$. The other, $R_2''$ is identified with $I\times (X\setminus N_{\frac{1}{2}})$ and thus diffeomorphic to $I\times (X\setminus (S^{p}\times D^{q+1}))$.
Thus on $R_2''$, any metric in $\Riem_{\std}^{+}(W', \p W')_{g'(-)}$ takes the form $dt^{2}+g_{\std}|_{X\setminus N_{\frac{1}{2}}}$. In particular, near $R_2'\cap R_2''\cong I\times S^{p}\times S^{q}$, any such metric is $dt^{2}+ds_{p}^{2}+ds_{q}^{2}$. 
\begin{figure}[htb!]
\vspace{1.2cm}
\hspace{-7cm}
\begin{picture}(0,0)%
\includegraphics{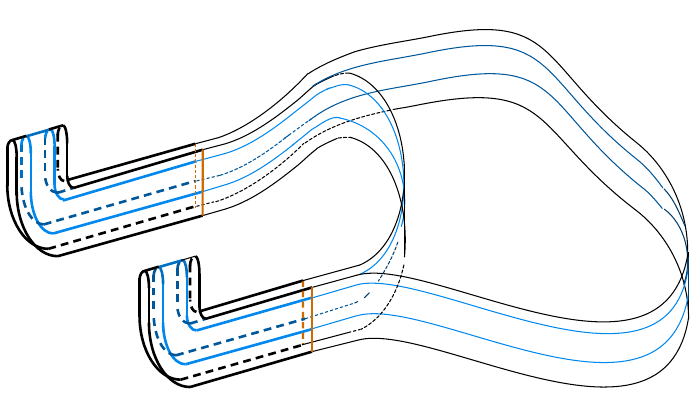}%
\end{picture}%
\setlength{\unitlength}{3947sp}%
\begingroup\makeatletter\ifx\SetFigFont\undefined%
\gdef\SetFigFont#1#2#3#4#5{%
  \reset@font\fontsize{#1}{#2pt}%
  \fontfamily{#3}\fontseries{#4}\fontshape{#5}%
  \selectfont}%
\fi\endgroup%
\begin{picture}(2079,2559)(1902,-5227)
\put(6000,-3850){\makebox(0,0)[lb]{\smash{{\SetFigFont{10}{8}{\rmdefault}{\mddefault}{\updefault}{\color[rgb]{0,0,0}$\textcolor{black}{R_2''}$}
}}}}
\put(2500,-4560){\makebox(0,0)[lb]{\smash{{\SetFigFont{10}{8}{\rmdefault}{\mddefault}{\updefault}{\color[rgb]{0,0,0}$\textcolor{black}{R_2'}$}
}}}}
\end{picture}%
\caption{The decomposition of the region $R_2$ into subregions $R_2'$ and $R_2''$}
\label{adjustmentregion}
\end{figure}
On the region $R_2'$, all metrics in $\Riem_{\std}^{+}(W', \p W')_{g'(-)}$ take the form of a restriction to a particular subregion of $\mathbb{R}^{p+1}\times \mathbb{R}^{q+1}$ equipped with the metric $g_{\tor}^{p+1}+g_{\tor}^{q+1}$. This subregion is depicted in Fig \ref{R2}. On $\mathbb{R}^{p+1}\times \mathbb{R}^{q+1}$, the cap of each torpedo is centered at the origin while the necks are infinite. The coordinates $s$ and $t$ in the diagram represent radial distance in $\mathbb{R}^{p+1}$ and $\mathbb{R}^{q+1}$ respectively. The boundary of $R_2'$ consists of $\partial_1(R_{2}')=R_2'\cap R_2''\cong I\times S^{p}\times S^{q}$ and a piece $\partial_2(R_{2}')\cong S^{p}\times D^{q+1}\times\{0\}\sqcup S^{p}\times D^{q+1}\times\{0\}$. We have further decomposed $R_2'$ into $3$ subregions. Near $\partial_1 R_{2}'$, there is a region we denote $R_{2}'(\mathrm{outer})$ on which the metric takes the form $ds^{2}+dt^{2}+ds_{p}^{2}+ds_{q}^{2}$. Next we have the region $R_{2}'(\mathrm{middle})$ in which the metric takes the same form but where the boundary curves in the diagram bend over an angle of $\frac{\pi}{2}$. Finally  we have remaining region, $R_{2}'(\mathrm{inner})$ where the metric takes the form $ds^{2}+ds_{p}^{2}+g_{\tor}^{q+1}$.
\begin{figure}[htb!]
\vspace{2cm}
\hspace{-2cm}
\begin{picture}(0,0)%
\includegraphics{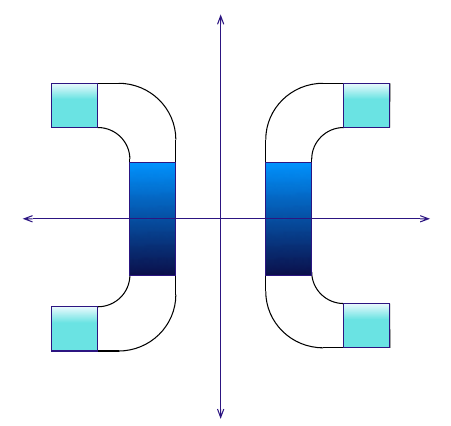}%
\end{picture}%
\setlength{\unitlength}{3947sp}%
\begingroup\makeatletter\ifx\SetFigFont\undefined%
\gdef\SetFigFont#1#2#3#4#5{%
  \reset@font\fontsize{#1}{#2pt}%
  \fontfamily{#3}\fontseries{#4}\fontshape{#5}%
  \selectfont}%
\fi\endgroup%
\begin{picture}(2079,2559)(1902,-5227)
\put(5000,-3450){\makebox(0,0)[lb]{\smash{{\SetFigFont{10}{8}{\rmdefault}{\mddefault}{\updefault}{\color[rgb]{0,0,0}$\textcolor{black}{\mathbb{R}^{p+1}}$}
}}}}
\put(3100,-2000){\makebox(0,0)[lb]{\smash{{\SetFigFont{10}{8}{\rmdefault}{\mddefault}{\updefault}{\color[rgb]{0,0,0}$\textcolor{black}{\mathbb{R}^{q+1}}$}
}}}}
\put(1400,-2600){\makebox(0,0)[lb]{\smash{{\SetFigFont{10}{8}{\rmdefault}{\mddefault}{\updefault}{\color[rgb]{0,0,0}$\textcolor{black}{R_{2}'(\mathrm{outer})}$}
}}}}
\put(2020,-3300){\makebox(0,0)[lb]{\smash{{\SetFigFont{10}{8}{\rmdefault}{\mddefault}{\updefault}{\color[rgb]{0,0,0}$\textcolor{black}{R_{2}'(\mathrm{inner})}$}
}}}}
\end{picture}%
\caption{Representing the metric on $R_2'$ as a subspace of $\mathbb{R}^{p+1}\times \mathbb{R}^{q+1}$ with the metric $g_{\tor}^{p+1}+g_{\tor}^{q+1}$}
\label{R2}
\end{figure} 

We will now describe an isotopy of the arbitrary metric $h\in \Riem_{\std}^{+}(W', \p W')_{g'(-)}$ to an element of $\Riem_{\Estd}^{+}(W', \p W')_{g'(-)}$. This isotopy is denoted: $$\sigma:I\longrightarrow \Riem_{\std}^{+}(W', \p W')_{g'(-)}$$ and will satisfy the following conditions.
\begin{enumerate}
\item[(i.)] $\sigma(0)=h$. 
\item[(ii.)] For all $t\in I$, the metric $\sigma(t)$ restricts on the region $R_2''$ to a cylinder metric of the form $dr^{2}+h_1|_{X\setminus N_{\rho'}}$ where $r\in[0,\lambda]$ for some smooth parameter $\lambda=\lambda(t)>0$.
\item[(iii.)] On the regions $R_1\cup R_3$, $\sigma(t)=h|_{R_1\cup R_3}$ for all $t\in I$.
\end{enumerate}
Thus, we make adjustments only on $R_2$. In order to satisfy condition (iii.) above, we in fact work away from the boundary of $R_2$, as suggested by the subregion inscribed inside $R_2$ in Fig. \ref{adjustmentregion}. The parameter $\lambda(t)$ in condition (ii.) allows us to stretch the cylindrical metric on $R_{2}''$ to be as long as we require. 
It remains to specify conditions for the isotopy $\sigma$ on the region $R_2'$. Obviously, near $R_2'\cap R_2''$, the metric $\sigma(t)|_{R_2'}$ must take the form $dr^{2}+ds_{p}^{2}+ds_{q}^2$ as in condition (ii.). Thus on $R_{2}'(\mathrm{outer})$ and $R_{2}'(\mathrm{middle})$ we do nothing more than carefully stretch out the cylinder in line with the above condition (ii.). The non-trivial part happens inside $R_{2}'(\mathrm{inner})$. Recall, here the metric takes the form $ds^{2}+ds_{p}^{2}+g_{\tor}^{q+1}(1)$. We leave the $ds_{p}^2$ factor untouched. On the cylinder factor $ds^{2}+g_{\tor}^{q+1}(1)$ we perform two iterations of the boot metric isotopy described in Lemma \ref{bootisot}. The resulting metric on this cylinder is depicted in Fig. \ref{bootstretch9} below while the result of the isotopy on $h$ is depicted in Fig. \ref{bootspec2}. 
\begin{figure}[htb!]
\vspace{1.7cm}
\hspace{-10cm}
\begin{picture}(0,0)%
\includegraphics{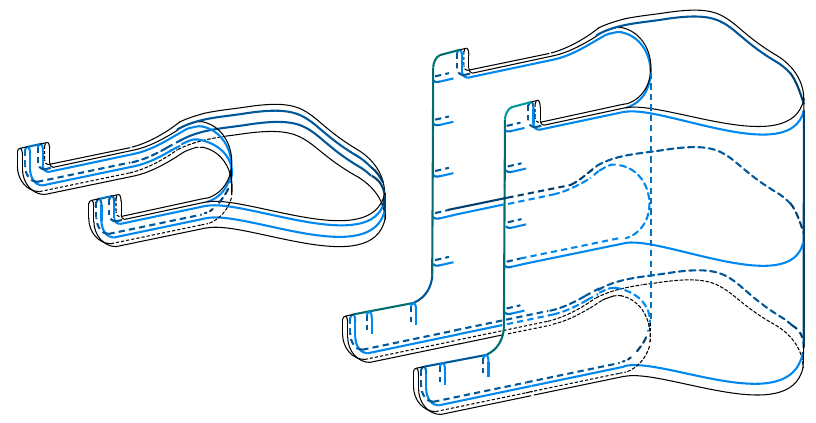}%
\end{picture}%
\setlength{\unitlength}{3947sp}%
\begingroup\makeatletter\ifx\SetFigFont\undefined%
\gdef\SetFigFont#1#2#3#4#5{%
  \reset@font\fontsize{#1}{#2pt}%
  \fontfamily{#3}\fontseries{#4}\fontshape{#5}%
  \selectfont}%
\fi\endgroup%
\begin{picture}(2079,2559)(1902,-5227)
\end{picture}%
\caption{The restriction of the metrics $\sigma(0)=h$ (left) and $\sigma(1)$ (right) to the region $R_{2}$}
\label{bootstretch9}
\end{figure} 
\begin{figure}[htb!]
\vspace{4.8cm}
\hspace{-9cm}
\begin{picture}(0,0)%
\includegraphics{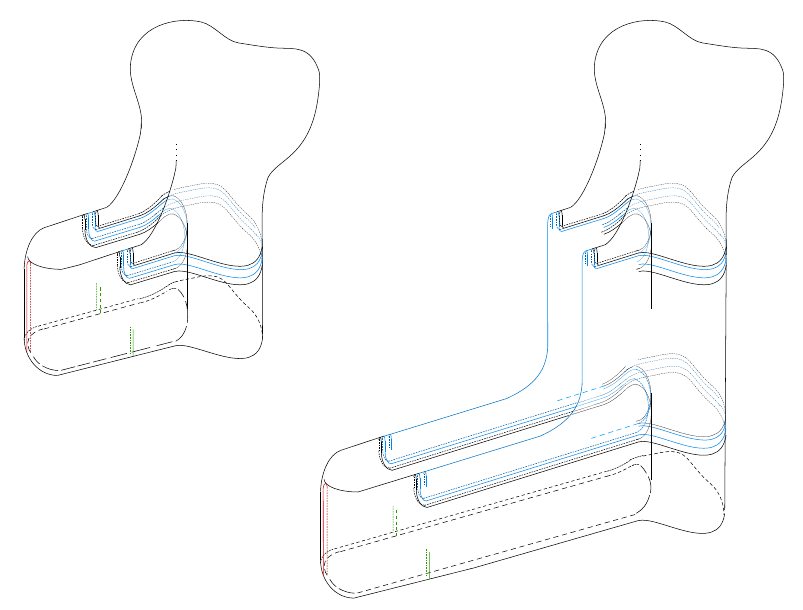}%
\end{picture}%
\setlength{\unitlength}{3947sp}%
\begingroup\makeatletter\ifx\SetFigFont\undefined%
\gdef\SetFigFont#1#2#3#4#5{%
  \reset@font\fontsize{#1}{#2pt}%
  \fontfamily{#3}\fontseries{#4}\fontshape{#5}%
  \selectfont}%
\fi\endgroup%
\begin{picture}(2079,2559)(1902,-5227)
\end{picture}%
\caption{ The elements $\sigma(0)=h\in\Riem_{\std}^{+}(W', \p W')_{g'(-)}$ and $\sigma(1)\in\Riem_{\Estd}^{+}(W', \p W')_{g'(-)}$}
\label{bootspec2}
\end{figure} 

\begin{remark}
The isotopy $\sigma$ necessarily alters the neck-lengths of the torpedo factor of the standard part of the boundary metric. This is the reason we opted to work inside the larger space $\Riem^{+}(W', \p W')_{g'(-)}$ rather than $\Riem^{+}(W', \p W')_{g'}$.
\end{remark}

The isotopy does indeed move metrics from $\Riem_{\std}^{+}(W', \p W')_{g'(-)}$ into $\Riem_{\Estd}^{+}(W', \p W')_{g'(-)}$. Moreover, as this isotopy only takes place where all elements of $\Riem_{\std}^{+}(W', \p W')_{g'(-)}$ agree and are standard, it is clear that it works for compact families of psc-metrics. There is a problem in using this isotopy to show the inclusion $\Riem_{\Estd}^{+}(W', \p W')_{g'(-)}\subset  \Riem_{\std}^{+}(W', \p W')_{g'(-)}$ is a weak homotopy equivalence. Metrics which are already in $\Riem_{\Estd}^{+}(W', \p W')_{g'(-)}$ may be temporarily moved out of that space. As everything happens on the standard region, the damage is not severe and we can get around this problem by, once again, introducing an intermediary space which contains $\Riem_{\Estd}^{+}(W', \p W')_{g'(-)}$, is invariant of the isotopy construction above and which is itself weakly homotopy equivalent to $\Riem_{\Estd}^{+}(W', \p W')_{g'(-)}$. This space is denoted $\Riem_{\step\std}^{+}(W', \p W')_{g'}(-)$ and will satisfy:
$$
\Riem_{\Estd}^{+}(W', \p W')_{g'(-)}\subset \Riem_{\step\std}^{+}(W', \p W')_{g'}(-)\subset  \Riem_{\std}^{+}(W', \p W')_{g'(-)}.
$$
It is here that we finally make use of our earlier work on step metrics, in section \ref{steps}.

We specify a cylinder $\cC\subset N$, where $\cC\cong S^{p}\times D^{q+1}\times I$ and whereon each element of $\Riem_{\Estd}^{+}(W', \p W')_{g'(-)}$ takes the form $dr^{2}+ds_{p}^{2}+g_{\tor}^{q+1}(1)$. We assume that the region $R_{2}'$ defined above is contained in $\cC$. This is depicted in the left image of Fig. \ref{steppy}. We now define the space $ \Riem_{\step\std}^{+}(W', \p W')_{g'}(-)$ to be the subspace of $\Riem_{\std}^{+}(W', \p W')_{g'(-)}$ consisting of psc-metrics which satisfy the following conditions.
\begin{enumerate}
\item[(i.)] Outside of $\cC$, metrics in $\Riem_{\step\std}^{+}(W', \p W')_{g'}(-)$ satisfy the same conditions as metrics in $\Riem_{\Estd}^{+}(W', \p W')_{g'(-)}$
\item[(ii.)] On $\cC$, metrics in $\Riem_{\step\std}^{+}(W', \p W')_{g'}(-)$ take the form $ds_{p}^{2}+g_{\step}^{q+2}(1)$ where $g_{\step}^{q+1}(1)$ is some element of $\Riem_{\step}^{+}(D^{q+1}\times I)$, the space of step metrics.
\end{enumerate}
Such elements are depicted in the right hand image of Fig. \ref{steppy}. We now have the following lemma.

\begin{lemma}\label{right2}
The inclusion: $$\Riem_{\Estd}^{+}(W', \p W')_{g'(-)}\subset  \Riem_{\std}^{+}(W', \p W')_{g'(-)}$$ is a weak homotopy equivalence.
\end{lemma}
\begin{proof}
It is easy to see that the sequence of inclusions: 
$$
\Riem_{\Estd}^{+}(W', \p W')_{g'(-)}\subset \Riem_{\step\std}^{+}(W', \p W')_{g'}(-)\subset  \Riem_{\std}^{+}(W', \p W')_{g'(-)},
$$
is valid. It is a simple application of Lemma \ref{stepprop} to show that the first of these inclusions is a weak homotopy equivalence. Finally, as the restriction of the isotopy described above on a cylinder of torpedo metrics is precisely the isotopy used to define step metrics, the space $\Riem_{\step\std}^{+}(W', \p W')_{g'}(-)$ is invariant under the above isotopy. \end{proof} 
\begin{figure}[htb!]
\vspace{6.8cm}
\hspace{-8cm}
\begin{picture}(0,0)%
\includegraphics{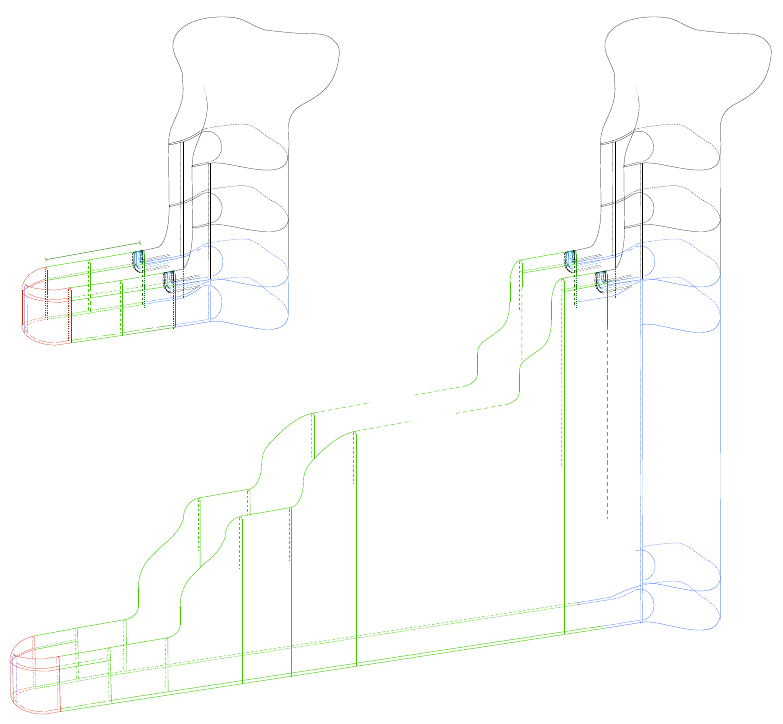}%
\end{picture}%
\setlength{\unitlength}{3947sp}%
\begingroup\makeatletter\ifx\SetFigFont\undefined%
\gdef\SetFigFont#1#2#3#4#5{%
  \reset@font\fontsize{#1}{#2pt}%
  \fontfamily{#3}\fontseries{#4}\fontshape{#5}%
  \selectfont}%
\fi\endgroup%
\begin{picture}(2079,2559)(1902,-5227)
\put(2600,-1400){\makebox(0,0)[lb]{\smash{{\SetFigFont{10}{8}{\rmdefault}{\mddefault}{\updefault}{\color[rgb]{0,0.7,0}$C$}
}}}}
\end{picture}%
\caption{Elements of $\Riem_{\Estd}^{+}(W', \p W')_{g'(-)}$ (left) and $\Riem_{\step\std}^{+}(W', \p W')_{g'}(-)$ (right) }
\label{steppy}
\end{figure} 
\noindent Combining Lemmas \ref{right1} and \ref{right2} with the observation made concerning diagram \ref{genEstd} gives us that the vertical map on the right side of diagram (\ref{bdydiag}) is a weak homotopy equivalence. This completes the proof of Theorem A, in the non-empty case. 

Finally, we consider the possibility that $\Riem^{+}(W, \p W)_{g}=\emptyset$. Assume that $\Riem^{+}(W', \p W')_{g'}\neq\emptyset$ and let $W''=W'\cup T_{\phi}^{-1} $ be the manifold obtained by attaching the ``upside-down" version of the trace of $\phi$, denoted $T_{\phi}^{-1}$ onto $W'$ in the obvious way. This upside-down trace also admits a Gromov-Lawson trace metric, $\bar{g}^{-1}$, between $g'$ and some psc-metric $g''$ on $X$. Thus, for some metric $h\in \Riem^{+}(W', \p W')_{g'}$ we have a psc-metric $h\cup\bar{g}^{-1}\in \Riem^{+}(W'')_{g''}$. By performing surgeries on the interior of $T_{\phi}\cup T_{\phi}^{-1}$ we may make this region cylindrical and so turn $W''$ back into $W$. Importantly, the fact that $p,q \geq 2$ means that such surgeries are in codimension at least three. Hence we may apply the original Gromov-Lawson construction, away from the boundary, to obtain from $h\cup\bar{g}^{-1}$, an element of $\Riem^{+}(W, \p W)_{g''}$. It follows from Theorem 0.4 of section 3.2 in \cite{Walsh1}, that the metrics $g$ and $g''$ are isotopic. Thus, by Lemma \ref{quasihomo}, the space $\Riem^{+}(W, \p W)_{g}\neq \emptyset$ and so we have that $\Riem^{+}(W, \p W)_{g}\neq\emptyset$ if and only if $\Riem^{+}(W', \p W')_{g'}\neq \emptyset$. This completes the proof of Theorem A.

\subsection{The proof of Theorem B}

This is essentially a mimic of the proof by Gromov and Lawson of their coincidentally named Theorem B in \cite{GL}.
\begin{proof}[Proof of Theorem B]
The weak homotopy equivalence in (i) follows from the same reasoning employed by Gromov and Lawson in their proof of Theorem B in \cite{GL}. Here, the authors showed that any two spin bordant simply connected $n$-manifolds $M$ and $N$ are mutually obtainable by surgeries in codimension at least three. They demonstrate this by performing surgeries in the interior of a spin manifold  $Y^{n+1}$, where $\partial Y=M\sqcup N$, to remove non-trivial elements in the integral homology groups ${H}_k(Y)$, where $k=1,2,n-1,n$. The spin and dimension conditions mean that all such elements are realised by embedded spheres with trivial normal bundle. In our case, as $X$ is null-bordant, we may perform surgeries on $W$ to obtain a manifold $W'$ with boundary $X'$ diffeomorphic to $S^{n}$. As $W'$ forms one half of a closed simply connected spin manifold (its double), there is a sequence of surgeries on the interior of $W'$, which transform it into the disk $D^{n+1}$. As with the Gromov-Lawson proof in \cite{GL}, all of these surgeries satisfy the appropriate co-dimension restrictions required by Theorem A and so the theorem follows.

Part (ii) is proved by the following argument. As $g$ is Gromov-Lawson cobordant to the round metric $ds_{n}^{2}$, part (i) gives us that $\Riem^{+}(W, \p W)_{g}$ is weakly homotopy equivalent to $\Riem^{+}(D^{n+1})_{ds_{n}^{2}}$. In turn the space $\Riem^{+}(D^{n+1})_{ds_{n}^{2}}$ is homeomorphic to a subspace $\Riem_{\std}^{+}(S^{n})\subset \Riem^{+}(S^{n})$, of psc-metrics which take the form of a standard torpedo on the southern hemisphere. The conclusion then follows from the fact, demonstrated in the proof of Theorem \ref{Chernysh}, that the inclusion $\Riem_{\std}^{+}(S^{n})\rh \Riem^{+}(S^{n})$ is a weak homotopy equivalence.
\end{proof}

\section{The Proof of Theorem C}\label{ThmBCDproof}
At this point it is useful to recall the following observation by Chernysh in \cite{Che2}. Recall the restriction map:
      \begin{equation*}
      \begin{split}
      \res: \Riem^{+}(W, \p W)&\longrightarrow \Riem^{+}(\partial W)\\
      g&\longmapsto g|_{\partial W.}
      \end{split}
      \end{equation*}
      Thus, for any $g\in\Riem^{+}(X)$, the space $\Riem^{+}(W, \p W)_g$ is precisely $ \res^{-1}(g)$.
      We next denote by $\Riem_0^{+}(\partial W)$, the image space $\res(\Riem^{+}(W, \p W))\subset \Riem^{+}(\partial W)$. Originally proved to be a quasifibration by Chernysh, \cite[Theorem 1.1]{Che2}, Ebert and Frenck have shown in \cite[Theorem 1.1]{EF}that the map:
      \begin{equation*}
      \res: \Riem^{+}(W, \p W)\longrightarrow \Riem_{0}^{+}(\partial W),
      \end{equation*}
      obtained by restriction of the codomain, is a Serre fibration. In particular this implies that for any psc-metrics $g\in \Riem^{+}(\partial W)$ and $\bar{g}\in \Riem^{+}(W, \p W)_g $, there are isomorphisms: 
      \begin{equation*}
      \pi_{k}(\Riem^{+}(W, \p W), \Riem^{+}(W, \p W)_g, \bar{g})\cong\pi_{k}(\Riem_{0}^{+}({\partial W}, g)),
      \end{equation*}
      for all $k\geq 0$. In particular, we obtain the following long exact sequence in homotopy groups.
{ \small
  $$ 
\xymatrix{  \cdots\ar@{->}[r(0.4)]^{}& {\pi_k(\Riem^{+}(W, \p W)_{g},\bar{g})} \ar@{->}[r(0.65)]^{}&
 {\pi_k (\Riem^{+}(W, \p W), \bar{g})}  \ar@{->}[r]^{\res_{*}} & {\pi_{k}({\Riem}_{0}^{+}(\partial W), g)}\ar@{->}[d]^{} & \\
\cdots{}&{\pi_{k-1}({\Riem}_{0}^{+}(\partial W), g)}\ar@{->}[l]^{} &{ \pi_{k-1} (\Riem^{+}(W, \p W), \bar{g})}\ar@{->}[l]_{\res_{*}} &{{\pi_{k-1}(\Riem^{+}(W, \p W)_{g},\bar{g})}}\ar@{->}[l]^{}.
}
$$}
\noindent \hspace{-0.15cm}We note here that the set ${\pi_{0}({\Riem}_{0}^{+}(\partial W), g)}$ is usually not trivial.

Recall that the space $\Riem_0^{+}(X)$, where $X=\partial W$, is of course the image of the above restriction map. Similarly, we define the space $\Riem_{0}^{+}(X')$ where $X'=\partial W'$, in the same way.
We will also denote by $\Riem_{0, \std}^{+}(X)$ and $\Riem_{0, \std}^{+}(X')$ the respective spaces $\Riem_0^{+}(X)\cap \Riem_{\std}^{+}(X)$ and $\Riem_0^{+}(X')\cap \Riem_{\std}^{+}(X')$. Recall, the spaces $ \Riem_{\std}^{+}(X)$ and $\Riem_{\std}^{+}(X')$ were the subject of Lemma \ref{easyhomeoclosed}.
 The following lemma is an easy consequence of Theorem \ref{Chernysh}.

\begin{lemma}\label{resChe}
Let $W$ and $W'$ satisfy the hypotheses of Theorem A. Then the inclusions: $$\Riem_{0, \std}^{+}(X)\rh \Riem_0^{+}(X) \text{ and } \Riem_{0, \std}^{+}(X')\rh \Riem_0^{+}(X')$$ are homotopy equivalences. In particular, the spaces $\Riem_{0}^{+}(X)$ and $\Riem_{0}^{+}(X')$ are homotopy equivalent.
\end{lemma}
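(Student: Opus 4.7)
The plan hinges on the observation that $\Riem_0^+(X)$ is a union of path components of $\Riem^+(X)$, and likewise $\Riem_0^+(X')$ is a union of path components of $\Riem^+(X')$. Indeed, suppose $g_0 \in \Riem_0^+(X)$ admits an extension $h_0 \in \Riem^+(W)_{g_0}$ and $t \mapsto g_t$ is a path in $\Riem^+(X)$ from $g_0$ to some $g_1$. Lemma \ref{isotoconc} converts this isotopy into a concordance $\fg \in \Riem^+(X \times I)_{g_0, g_1}$, and the gluing procedure of section \ref{coladj} produces an element $h_0 \cup \fg \in \Riem^+(W)_{g_1}$, so $g_1 \in \Riem_0^+(X)$ as well.

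Granting this, the first assertion is a routine consequence of Theorem \ref{Chernysh}, which (combined with Lemma \ref{stretchtorp} and Lemma \ref{stretchtorp2}) provides a homotopy equivalence $\Riem_{\std(\rho,1)}^+(X) \hookrightarrow \Riem^+(X)$. Note that $\Riem_{0,\std(\rho,1)}^+(X)$ is precisely the preimage of $\Riem_0^+(X)$ under this inclusion. A standard argument shows that whenever $f \colon A \to B$ is a homotopy equivalence and $B_0 \subset B$ is a union of path components, then $f^{-1}(B_0) \subset A$ is a union of path components of $A$ and the restriction $f|_{f^{-1}(B_0)} \colon f^{-1}(B_0) \to B_0$ is itself a homotopy equivalence: any homotopy inverse $g \colon B \to A$ must map $B_0$ into $f^{-1}(B_0)$ (since $f(g(b))$ and $b$ lie in the same path component for every $b \in B_0$), and the two connecting homotopies restrict automatically because no path ever leaves its starting path component. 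The same argument gives the analogous claim for $X'$.

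For the ``in particular'' part I invoke the homeomorphism $\fh \colon \Riem_{\std(\rho,1)}^+(X) \to \Riem_{\std(\rho',1)}^+(X')$ of Lemma \ref{easyhomeoclosed} and show that it restricts to a homeomorphism between the ``$0$'' subspaces. The forward containment is immediate from Theorem A: given $g \in \Riem_{0,\std(\rho,1)}^+(X)$ with extension $h \in \Riem^+(W)_g$, the map $\mu_{T_\phi,\bar g}$ produces an element of $\Riem^+(W')_{\fh(g)}$, so $\fh(g) \in \Riem_{0,\std(\rho',1)}^+(X')$. The reverse containment is handled by the undoing argument from the end of the proof of Theorem A: given an extension of $\fh(g)$ on $W'$, one attaches the upside-down trace $T_\phi^{-1}$, constructs an inverse Gromov--Lawson trace metric to obtain a psc-metric on $W'' = W' \cup T_\phi^{-1}$ extending some $g'' \in \Riem^+(X)$, and performs interior surgeries in codimension at least three to convert $W''$ back into $W$. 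Theorem $0.4$ of \cite{Walsh1} identifies $g''$ as isotopic to $g$, whereupon Lemma \ref{quasihomo} forces $\Riem^+(W)_g \neq \emptyset$, so $g \in \Riem_{0,\std(\rho,1)}^+(X)$. Composing the three equivalences
\[
\Riem_0^+(X) \simeq \Riem_{0,\std(\rho,1)}^+(X) \cong \Riem_{0,\std(\rho',1)}^+(X') \simeq \Riem_0^+(X')
\]
finishes the proof.

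The main obstacle I anticipate is not conceptual but bookkeeping: making sure that the collar adjustment of section \ref{coladj} genuinely produces a metric on the original manifold $W$ (rather than on an enlarged cylinder) in the verification of the ``union of path components'' claim, and that the two applications of Theorem A (the forward use of $\mu_{T_\phi,\bar g}$ and the reverse undoing argument) line up consistently with $\fh$ at the level of boundary metrics. Everything else is either a mechanical invocation of Theorem \ref{Chernysh} or a direct reassembly of machinery already developed for Theorem A.
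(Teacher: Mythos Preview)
Your proposal is correct and follows essentially the same approach as the paper: both arguments rest on the observation that $\Riem_0^+(X)$ is a union of path components of $\Riem^+(X)$ (via isotopy-implies-concordance), then restrict the known homotopy equivalence $\Riem_{\std(\rho,1)}^+(X)\hookrightarrow\Riem^+(X)$ from Theorem~\ref{Chernysh} and Lemma~\ref{stretchtorp2} to these components, and finally invoke the homeomorphism $\fh$ of Lemma~\ref{easyhomeoclosed}. You are more explicit than the paper in two places---you spell out the elementary fact that a homotopy equivalence restricts to one on any union of path components, and for the reverse containment $\fh^{-1}(\Riem_{0,\std(\rho',1)}^+(X'))\subset\Riem_{0,\std(\rho,1)}^+(X)$ you invoke the full undoing argument from the end of Theorem~A, whereas the paper simply asserts that the Gromov--Lawson construction sets up a bijection on path components---but these are elaborations of the same strategy rather than a different route.
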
      
\begin{proof}
Suppose $g\in \Riem^{+}(X)$ extends to an element of $\bar{g}\in\Riem^{+}(W, \p W)_{g}$. Then, using the fact that isotopy implies concordance, any metric $h\in\Riem^{+}(X)$, which is psc-isotopic to $g$, extends to an element $\bar{h}\in\Riem^{+}(W, \p W)_{h}$. This partitions the path components of $\Riem^{+}(X)$ into the subsets of those consisting of psc-metrics which extend to elements of $\Riem^{+}(W, \p W)$ and those which do not. The former is of course the subspace $\Riem_{0}^{+}(W, \p W)$. Furthermore, if a psc-metric $g$ is an element of $\Riem_{0}^{+}(X)$, then any metric $g'$ obtained by Gromov-Lawson surgery is an element of $\Riem_0^{+}(X')$. In case this is unclear, recall that if $\bar{g}\in\Riem^{+}(W, \p W)_{g}$, the procedure described in Theorem A gives rise to a psc-metric $\bar{g}'\in\Riem^{+}(W', \p W')_{g'}$. Hence, the Gromov-Lawson construction gives rise to a one to one correspondence between path components of $\Riem^{+}(X)$ and $\Riem^{+}(X')$. In particular, subspaces $\Riem_{0}^{+}(X)$ and $\Riem_{0}^{+}(X')$ arise by simply removing corresponding (homotopy equivalent) path components of $\Riem^{+}(X)$ and $\Riem^{+}(X')$. The arguments in the proof of Theorem \ref{Chernysh} which demonstrate that the inclusion $\Riem_{\std}(X)\rh\Riem^{+}(X)$ is a homotopy equivalence go through just as well to show that $\Riem_{0, \std}(X)\rh\Riem_{0}^{+}(X)$ and $\Riem_{0, \std}(X')\rh\Riem_{0}^{+}(X')$ are homotopy equivalences. It follows immediately from Lemma \ref{easyhomeoclosed}, that the spaces $\Riem_{0, \std}(X)$ and $\Riem_{0, \std}(X')$ are homeomorphic, completing the proof. \end{proof} 

Suppose now that we fix psc-metrics $g\in\Riem_0^{+}(X)$ and $g_{\std}\in\Riem_{0, \std}^{+}(X)$ where $g_\std$ is obtained from $g$ in the usual way, by the Gromov-Lawson construction. Denoting by $\Riem^{+}(W, \p W)_{\std}$, the pre-image $\res^{-1}(\Riem_{0, \std{}}^{+}(X))$, we obtain the following commutative diagram where, as always, hooked arrows denote inclusion. 
$$
\xymatrix{
& \Riem^{+}(W, \p W)_{g}\ar@{->}[d]^{\mu_{\bar{g}_{\con}}} \ar@{^{(}->}[r]^{}& \Riem^{+}(W, \p W) \ar@{->}[r]^{\res} &\Riem_{0}^{+}(X)\\
& \Riem^{+}(W, \p W)_{g_{\std}} \ar@{^{(}->}[r]^{} &\Riem^{+}(W, \p W)_{\std} \ar@{^{(}->}[u]^{}\ar@{->}[r]^{\res}&\Riem_{0, \std{}}^{+}(X)\ar@{^{(}->}[u]^{}
}
$$

The vertical map on the left, $\mu_{{\bar{g}_{\con}}}$, is defined in the proof of Theorem A. In particular, it is the map that attaches to the boundary of $W$ the concordance between $g$ and $g_{\std}$ obtained from the Gromov-Lawson isotopy specified by Theorem \ref{GLcompact}.  By Lemma \ref{whe1}, we know that this map is a weak homotopy equivalence. The rightmost vertical map is inclusion and is also a homotopy equivalence by Lemma \ref{resChe} above. We will now show that the middle vertical map, which is also an inclusion, is a weak homotopy equivalence as well. 
\begin{lemma} \label{whe}
The inclusion map: $$\Riem^{+}(W, \p W)_{\std}\hookrightarrow  \Riem^{+}(W, \p W)$$ is a weak homotopy equivalence.
\end{lemma}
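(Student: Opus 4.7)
The plan is to deduce the lemma from the two quasifibrations exhibited just above it (the Chernysh quasifibration for $\res$ and Lemma~\ref{quasi}) together with the weak equivalences on fibers and bases supplied by Lemmas~\ref{whe1} and~\ref{resChe}, by comparing long exact sequences and invoking the five lemma. Fix $g\in\Riem_{0}^{+}(X)$ and $g_{\std}\in\Riem_{0,\std(\rho,1)}^{+}(X)$ related by the Gromov--Lawson construction, and fix a basepoint $\bar g\in\Riem^{+}(W)_{g}$ together with its image $\bar g_{\std}=\mu_{g_{\con}}(\bar g)\in\Riem^{+}(W)_{g_{\std}}$.

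First I would assemble the commutative square of maps displayed just before the statement. The Chernysh quasifibration and Lemma~\ref{quasi} give two long exact sequences of homotopy groups fitting into a ladder
\[
\xymatrix@C=0.7em{
\cdots\ar[r]& \pi_k(\Riem^{+}(W)_{g}) \ar[r]\ar[d]^{(\mu_{g_{\con}})_*} & \pi_k(\Riem^{+}(W)) \ar[r]^-{\res_*}\ar[d]^{\iota_*} & \pi_k(\Riem_{0}^{+}(X)) \ar[r]\ar[d]^{j_*} & \pi_{k-1}(\Riem^{+}(W)_{g}) \ar[r]\ar[d]^{(\mu_{g_{\con}})_*} & \cdots \\
\cdots\ar[r]& \pi_k(\Riem^{+}(W)_{g_{\std}}) \ar[r] & \pi_k(\Riem_{\std(\rho,1)}^{+}(W)) \ar[r]^-{\res_*} & \pi_k(\Riem_{0,\std(\rho,1)}^{+}(X)) \ar[r] & \pi_{k-1}(\Riem^{+}(W)_{g_{\std}}) \ar[r] & \cdots
}
\]
where $\iota$ denotes the inclusion in question. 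By Lemma~\ref{whe1} the left vertical map $(\mu_{g_{\con}})_*$ is an isomorphism, and by Lemma~\ref{resChe} the right vertical map $j_*$ is an isomorphism (in fact the inclusion $\Riem_{0,\std(\rho,1)}^{+}(X)\hookrightarrow \Riem_{0}^{+}(X)$ is a homotopy equivalence).

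The key step is to verify that the ladder commutes up to homotopy. The right square commutes strictly, because for $h\in\Riem_{\std(\rho,1)}^{+}(W)$ one simply has $\res(h)=\res(\iota(h))$ in $\Riem_{0}^{+}(X)$. The left square is where the main subtlety lies: going one way, $h\in\Riem^{+}(W)_{g}$ is included directly into $\Riem^{+}(W)$; going the other way, $h$ is sent to $h\cup g_{\con}$ (with the collar adjustment of \S\ref{coladj}) and then included. The discrepancy is resolved exactly as in the proof of Lemma~\ref{whe1}: since $g_{\con}$ is slicewise, it is isotopic through psc-concordances on $X\times I$ to the trivial cylinder $g+dt^{2}$, and attaching the trivial cylinder is, after the collar rescaling, homotopic to the identity on $\Riem^{+}(W)$. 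This provides a continuous homotopy between the two composites, taking values in $\Riem^{+}(W)$ (we do \emph{not} require the intermediate metrics to lie in $\Riem_{\std(\rho,1)}^{+}(W)$, which is why the argument works).

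With the ladder commutative on homotopy groups and the outer four verticals isomorphisms, the five lemma forces $\iota_*:\pi_k(\Riem_{\std(\rho,1)}^{+}(W))\to\pi_k(\Riem^{+}(W))$ to be an isomorphism for every $k\ge 1$ and every choice of basepoint in $\Riem_{\std(\rho,1)}^{+}(W)$. For $k=0$ one argues component-wise: the bijection $\pi_{0}(\Riem_{0,\std(\rho,1)}^{+}(X))\to\pi_{0}(\Riem_{0}^{+}(X))$ from Lemma~\ref{resChe} together with the fiberwise equivalence $\mu_{g_{\con}}$ gives a bijection $\pi_{0}(\Riem_{\std(\rho,1)}^{+}(W))\to\pi_{0}(\Riem^{+}(W))$ via the exact sequence of pointed sets. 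I expect the main obstacle to be a careful bookkeeping of the homotopy in the left square — one must produce the homotopy using only metrics in $\Riem^{+}(W)$ (since $\Riem^{+}(W)_g$ is not preserved) and then apply the Chernysh quasifibration theorem to transport it faithfully to the homotopy exact sequence, but this is exactly what Lemma~\ref{whe1} already furnishes. Combining this weak equivalence with Lemma~\ref{easyhomeoclosed} applied to $W$ and $W'$ then yields Theorem~C at once.
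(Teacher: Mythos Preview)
Your proposal is correct and follows essentially the same route as the paper: compare the long exact sequences of the two quasifibrations (Chernysh's for $\res:\Riem^{+}(W)\to\Riem_{0}^{+}(X)$ and Lemma~\ref{quasi} for its restriction), use Lemmas~\ref{whe1} and~\ref{resChe} to identify the fiber and base maps as isomorphisms, and conclude by the five lemma. The paper's proof is in fact terser than yours---it simply asserts the ladder and invokes the five lemma---whereas you have usefully spelled out the commutativity of the left square via the slicewise homotopy of $g_{\con}$ to the cylinder, and treated $\pi_{0}$ separately.

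One small point of bookkeeping: in your displayed ladder the vertical arrows labeled $\iota_*$ and $j_*$ point downward, but the inclusions $\Riem_{\std(\rho,1)}^{+}(W)\hookrightarrow\Riem^{+}(W)$ and $\Riem_{0,\std(\rho,1)}^{+}(X)\hookrightarrow\Riem_{0}^{+}(X)$ go the other way, while $\mu_{g_{\con}}$ genuinely points down. The paper's own ladder has the same mixed orientation (it draws the fiber map as a two-headed $\cong$ and the others upward). This is harmless once you know the outer verticals are isomorphisms, but you should either flip the bottom and top rows or explicitly say you are using the inverse isomorphisms on the base and fiber so that all verticals point in one direction before applying the five lemma.
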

\begin{proof}
The map $\res:\Riem^{+}(W, \p W)_{\std}\longrightarrow \Riem_{0, \std{}}^{+}(X)$ is simply the pull-back under the inclusion $\Riem_{0,\std{}}^{+}(X)\hookrightarrow \Riem_{0}^{+}(X)$, of the known Serre fibration (from \cite[Theorem 1.1]{EF}) $\res: \Riem^{+}(W, \p W) \rightarrow \Riem_{0}^{+}(X)$. Thus, it is also a Serre fibration.
As both rows of the above diagram are Serre fibrations, we obtain the following diagram at the level of homotopy groups. 
The rows of this diagram are of course exact. We know in advance that the vertical homomorphisms are isomorphisms in all cases except $\pi_{k}(\Riem_{0, \std}^{+}(X))\rightarrow \pi_{k}(\Riem^{+}(W, \p W), \bar{h})$. That this is necessarily an isomorphism is implied by the $5$-lemma. 
{\tiny
$$\hspace{-0.7cm}
\xymatrix@C-=0.4cm{
&
\cdots  \ar@{->}[r]^{} \pi_{k}(\Riem^{+}(W, \p W)_{g}, \bar{h})\ar@{<->}[d]^{\cong} \ar@{->}[r]^{}& \pi_{k}(\Riem^{+}(W, \p W), \bar{g}) \ar@{->}[r]^{\res_{*}} &\pi_k(\Riem_{0}^{+}(X), g) \ar@{->}[r]^{}&\pi_{k-1}(\Riem^{+}(W, \p W)_{g}, \bar{g})\ar@{->}[r]^{}\cdots&
\\
&\cdots  \ar@{->}[r]^{} \pi_{k}(\Riem^{+}(W, \p W)_{g_{\std}}, \bar{g}_{\std}) \ar@{->}[r]^{} &\pi_{k}(\Riem^{+}(W, \p W)_{\std}, \bar{g}_{\std} )\ar@{->}[u]^{}\ar@{->}[r]^{\res_{*}}&\pi_{k}(\Riem_{0, \std{}}^{+}(X), g_{\std})\ar@{<->}[u]^{\cong}\ar@{->}[r]^{}&\pi_{k-1}(\Riem^{+}(W, \p W)_{g_{\std}}, \bar{g}_{\std})\ar@{<->}[u]^{\cong}\ar@{->}[r]^{}\cdots&
}
$$}
\end{proof}

Turning our attention to $W'$, we let $g'=g_{\std}'\in\Riem_{0, \std}^{+}(X')$ denote the psc-metric obtained by Gromov-Lawson surgery on $g$. Defining $\Riem^{+}(W', \p W')_{\std}:=\res^{-1}(\Riem_{0, \std{}}^{+}(X'))$, we obtain the following commutative diagram where the righthand vertical inclusion is a weak homotopy equivalence.
$$\label{diag2}
\xymatrix{
& \Riem^{+}(W', \p W')_{g'}\ar@{<->}[d]^{=} \ar@{^{(}->}[r]^{}& \Riem^{+}(W', \p W') \ar@{->}[r]^{\res} &\Riem_{0}^{+}(X')\\
& \Riem^{+}(W', \p W')_{g'} \ar@{^{(}->}[r]^{} &\Riem^{+}(W', \p W')_{\std} \ar@{^{(}->}[u]^{}\ar@{->}[r]^{\res}&\Riem_{0, \std{}}^{+}(X')\ar@{^{(}->}[u]^{}
}
$$
Analogously to before, the map $\res: \Riem^{+}(W', \p W')_{\std} \rightarrow\Riem_{0, \std{}}^{+}(X')$ is a Serre fibration and consequently, we have the following lemma.
\begin{lemma}\label{whe'}
The inclusion map: $$\Riem^{+}(W', \p W')_{\std}\rh \Riem^{+}(W', \p W')$$ is a weak homotopy equivalence.
\end{lemma}

Next, given some $g\in\Riem_{0}^{+}(X)$, we recall a number of spaces defined in the previous section. Firstly, we have the subspaces $\Riem_{\boot}^{+}(W, \p W)_{g_{\std}(l_2)}\subset \Riem^{+}(W, \p W)_{g_{\std}}$ and $\Riem_{\Estd}^{+}(W', \p W')_{g'}\subset \Riem_{\std}^{+}(W', \p W')_{g'}\subset \Riem^{+}(W', \p W')_{g'}$. Recall these came with a map:
$$\mu_{\bar{g}}: \Riem_{\boot}^{+}(W, \p W)_{g_{\std}(l_2)}\longrightarrow\Riem_{\Estd}^{+}(W', \p W')_{g'},$$
which, in Lemma \ref{homeobd} is shown to be a homeomorphism. The reader may wish to refer to Fig. \ref{bootstepping} for a schematic description of metrics in these spaces. We recall also the spaces $\Riem_{\boot}^{+}(W, \p W)_{{g_{\std}}(l_2)}$, $\Riem_{\boot}^{+}(W, \p W)_{{g_{\std}}(-)}$, $\Riem_{\step}^{+}(W, \p W)_{{g_{\std}}(L_2)}$, $\Riem_{\step}^{+}(W, \p W)_{{g_{\std}}(l_2)}$ and $\Riem^{+}(W, \p W)_{{g_{\std}}(-)}$ described in the preamble to Lemma \ref{bootstepwhe}. 
We will now describe extensions of these spaces which involve allowing the boundary metric to vary.

For each $g\in \Riem_{0, \std}^{+}(X)$, we define $\Riem_{\boot}^{+}(W, \p W)_{g(l_2)}$ to be the space of psc-metrics which is the images of the map $\mu_{\bar{g}_{\pre}}$ with one minor caveat. In this case, as the metric is already standard, we assume that the $\mu_{\bar{g}_{\con}}$ factor of the map $\mu_{\bar{g}_{\pre}}$ simply attaches a cylinder. We similarly define $\Riem_{\step}^{+}(W, \p W)_{{g}(L_2)}$. Finally, we define $\Riem_{\Estd}^{+}(W', \p W')_{g'}$ to be the image of $\mu_{\bar{g}}$ where $\bar{g}$ is the corresponding Gromov-Lawson trace with respect to the metric $g$.  

We are now able to define the spaces: 
\begin{equation*}
\begin{split}
\Riem_{\Estd}^{+}(W', \p W'):=&\bigcup_{g'\in\Riem_{0, \std}^{+}(X')}\Riem_{\Estd}^{+}(W', \p W')_{g'}\\
\Riem_{\boot(l_2)}^{+}(W, \p W):=&\bigcup_{g\in\Riem_{0, \std}^{+}(X)}\Riem_{\boot }^{+}(W, \p W)_{g(l_2)},\\
\Riem_{\boot}^{+}(W, \p W):=&\bigcup_{l_2}\Riem_{\boot(l_2)}^{+}(W, \p W) {\text{ where $l_2>0$ is admissible},}\\
\Riem_{\step(L_2)}^{+}(W, \p W):=&\bigcup_{g\in\Riem_{0, \std}^{+}(X)}\Riem_{\step}^{+}(W, \p W)_{g(L_2)}\\
\Riem_{\step}^{+}(W, \p W):=&\bigcup_{L_2>0}\Riem_{\step(L_2)}^{+}(W, \p W).
\end{split}
\end{equation*}
We would like to add a couple of other spaces here. With $\Riem_{\std}^{+}(X)$ as above we define for each $\lambda>0$ the space:
$$\Riem_{\std(\lambda)}^{+}(X):=\{g\in\Riem^{+}(X):\phi_{\frac{1}{2}}^{*}g=ds_{p}^{2}+g_{\tor}^{q+1}(1)_{\lambda}\}.$$
Thus, the neck length of the torpedo factor on the standard region is equal to $\lambda>0$. 
We now have:
$$\Riem_{0, \std(\lambda)}^{+}(X):= \Riem_{\std(\lambda)}^{+}(X)\cap \Riem_{0}^{+}(X)\text{ and } \Riem^{+}(W, \p W)_{\std(\lambda)}:=\res^{-1}(\Riem_{0, \std(\lambda)}^{+}(X)).$$
Naturally, this leads to:
$$\Riem_{0, \std(-)}^{+}(X):=\bigcup_{\lambda>0}\Riem_{0, \std(\lambda)}^{+}(X) \text{ and } \Riem^{+}(W, \p W)_{\std(-)}:=\res^{-1}\Riem_{0, \std(-)}^{+}(X).$$
 
\begin{lemma}\label{bootstepwhegeneral}
Consider, for some admissible $l_2>0$, the following commutative diagram of inclusions.
$$\label{diag2}
\xymatrix{
&\Riem_{\boot(l_2)}^{+}(W, \p W) \ar@{^{(}->}[d]^{} \ar@{^{(}->}[r]^{}&  \Riem_{\step(l_2)}^{+}(W, \p W)    \ar@{^{(}->}[r]^{}\ar@{^{(}->}[d]^{} &  \Riem^{+}(W, \p W)_{\std(l_2)} \ar@{^{(}->}[d]^{}& \\
& \Riem_{\boot(-)}^{+}(W, \p W) \ar@{^{(}->}[r]^{} &    \Riem_{\step(-)}^{+}(W, \p W) \ar@{^{(}->}[r]^{}&\Riem^{+}(W, \p W)_{\std(-)}&\Riem^{+}(W, \p W)_{\std} \ar@{_{(}->}[l]^{}
}
$$
Every map in this diagram is a weak homotopy equivalence.
\end{lemma}
\begin{proof}
With the exception of the bottom right map, this is essentially a repeat of the proof of Lemma \ref{bootstepwhe}. As all adjustments take place on metrics which are already standard, the fact that we have increased the size of the spaces from those of the earlier lemma makes no difference. The bottom right map in fact forms part of a deformation retract; simply scale all torpedo neck lengths down to length $1$.
\end{proof}

We now define a map: 
\begin{equation*}
\begin{split}
\mu: \Riem_{\boot(l_2)}^{+}(W, \p W)&\longrightarrow \Riem_{\Estd}^{+}(W', \p W')\\
h&\longmapsto\mu_{\overline{h|_{\partial W}}}(h),
\end{split}
\end{equation*}
where $\mu_{\overline{h|_{\partial W}}}$ is the lower horizontal map from diagram \ref{bdydiag} with respect to $h|_{\partial W}$.
This map simply removes the boots (the metric $ds_{p}^{2}+\hat{g}_{\tor}^{q+2}(1)_{1, l_2}$) from elements of $\Riem_{\std}^{+}(W, \p W)$ and attaches $g_{\tor}^{p+1}+g_{\tor}^{q+1}$. Again, as this map only affects the standard region of the metric, it really is just the earlier map. The following lemma, the obvious generalisation of Lemma \ref{homeobd} is now immediate.
\begin{lemma}\label{genhomeo}
The map $\mu: \Riem_{\boot(l_2)}^{+}(W, \p W)\longrightarrow \Riem_{\Estd}^{+}(W', \p W')$ is a homeomorphism. 
\end{lemma}

\begin{proof}[Proof of Thereom C]
We begin by consolidating what we have proved so far in the following diagram.
$$\label{diag2}
\xymatrix{
& \Riem_{\boot(l_2)}^{+}(W, \p W)\ar@{->}[d]^{\mu (\cong)} \ar@{^{(}->}[r]^{}& \Riem^{+}(W, \p W)_{\std} \ar@{^{(}->}[r] & \Riem^{+}(W, \p W)\\
&  \Riem_{\Estd}^{+}(W', \p W') \ar@{^{(}->}[r]^{} &\Riem^{+}(W', \p W')_{\std} \ar@{^{(}->}[r]& \Riem^{+}(W', \p W')
}
$$
The vertical map is a homeomorphism by Lemma \ref{genhomeo}.  It remains to deal with the horizontal maps. These maps are all inclusions, the rightmost two of which are weak homotopy equivalences by Lemmas \ref{whe} and \ref{whe'}. That the top left horizontal map is a weak homotopy equivalence follows from Lemma \ref{bootstepwhegeneral}. 
Weak homotopy equivalence in the case of the bottom left horizontal map follows from application of the arguments used in the proof of Lemma \ref{right2}. Again, as all adjustments take place only in the standard region, the methods of this Lemma apply equally well here.
\end{proof}

\noindent Finally, Corollary D follows easily by the argument used to prove part (i) of Theorem B.

\bibliographystyle{amsplain}

\printindex

\end{document}